\documentclass[twoside]{amsart}
\setlength{\textheight}{43pc}
\setlength {\textwidth}{28pc}
\pagestyle{myheadings}
\markboth{KURT FOSTER}{HT90 and ``simplest'' number fields}
\date{}
\author{Kurt Foster}
\title{HT90 and ``simplest'' number fields}
\subjclass[2000]{Primary 12E10, 12F12; Secondary 12F05, 11R09, 20B25, 11R11, 11R20, 11R27, 11B39, 11D57} % Primary and secondary MSC numbers
\usepackage{amsmath,amsthm,amssymb,amsfonts,url}
%       Theorem environments
%% \theoremstyle{plain}  is the default
\newtheorem{thm}{Theorem}[section]
\newtheorem{cor}[thm]{Corollary}
\newtheorem{lem}[thm]{Lemma}
\newtheorem{prop}[thm]{Proposition}
\theoremstyle{definition}
\newtheorem{defn}{Definition}[section]

\newtheorem*{exmps}{Examples}

\theoremstyle{remark}
\newtheorem*{rem}{Remark}
\newtheorem*{rems}{Remarks}

\numberwithin{equation}{section}
%\newcommand{\bysame}{\mbox{\rule{3em}{.4pt}}\,}

%commonly used symbols
\newcommand{\C}{\mathbb{C}} %field of complex numbers
\newcommand{\Q}{\mathbb{Q}} %field of rational numbers
\newcommand{\R}{\mathbb{R}} %field of real numbers
\newcommand{\Z}{\mathbb{Z}} %rational integers
\newcommand{\kk}{\mathbf{k}} %ground field

% Enclose the argument in vert-bar delimiters:
 
\let\abs=\envert

% clickable links to internal cross-references -- this shouldbe the LAST command before \begin{document}
\usepackage[colorlinks=true, pdfstartview=FitV, linkcolor=red, citecolor=blue, urlcolor=blue]{hyperref}

\begin{document}
\maketitle

\begin{abstract} A standard formula \eqref{e:1} leads to a proof of HT90, but requires proving the existence of $\theta$ such that $\alpha\ne 0$, so that $\beta=\alpha/\sigma(\alpha)$.

We instead impose the condition \eqref{e:M}, that taking $\theta=1$ makes $\alpha=0$.  Taking $n=3$, we recover Shanks's simplest cubic fields.  The ``simplest'' number fields of degrees $3$ to $6$, Washington's cyclic quartic fields, and a certain family of totally real cyclic extensions of $\Q(\cos(\pi/4m))$ all have defining polynomials whose zeroes satisfy \eqref{e:M}.

Further investigation of \eqref{e:M} for $n=4$ leads to an elementary algebraic construction of a $2$-parameter family of octic polynomials with ``generic'' Galois group $_{8}T_{11}$.  Imposing an additional algebraic condition on these octics produces a new family of cyclic quartic extensions. This family includes the ``simplest'' quartic fields and Washington's cyclic quartic fields as special cases.

We obtain more detailed results on our octics when the parameters are algebraic integers in a number field.  In particular, we identify certain sets of special units, including exceptional sequences of $3$ units, and give some of their properties.  
\end{abstract}
%
%begin new section
%
\section{Introduction}
\label{s:intro}
Hilbert's Theorem 90 (See, e.g.\ \cite{lang:algebra}) characterizes elements $\beta$ of norm 1 in a cyclic extension $L/\kk$ of degree $n$ with Galois group $G = \langle\sigma\rangle$; one has
\begin{equation}
\tag{HT90} \label{e:ht90}
\mathcal{N}_{L/\kk}(\beta) = 1\; \text{if, and only if,}\; \beta = \alpha/\sigma(\alpha)\,\text{for some}\,\alpha \in L.
\end{equation}
``If'' is obvious.  The usual proof of ``only if '' uses the formula
\begin{equation}
\tag{1}\label{e:1}
\alpha = \theta + \sigma(\theta)\beta + \sigma^{2}(\theta)\beta\sigma(\beta) +  \ldots + \sigma^{n-1}(\theta)\beta\sigma(\beta) \cdots \sigma^{n-2}(\beta).
\end{equation}
If $\mathcal{N}_{L/\kk}(\beta) = 1$, this formula makes $\alpha/\sigma(\alpha) = \beta$ a formal identity.  One has to show that $\alpha \ne 0$ for some $\theta \in L$ to complete the proof.

In  \cite{cohn:advnothr}, Chapter XI, Theorem 2 there is a constructive proof of \eqref{e:ht90} for $n = 2$.  It actually exhibits a nonzero $\alpha$:  Substituting $\theta = 1$ gives the simplified formula
$$\alpha = 1 + \beta.$$
This gives a nonzero $\alpha$ unless $\beta = -1$, and this case is handled separately.

We note that $\theta=1$ is the \emph{only} nonzero value guaranteed to be in \emph{every} field.  We make this choice for arbitrary $n$. We assume $L/\kk$ is a finite Galois extension with Galois group $G$, and $\sigma \in G$ is of order $n$.  We impose the condition  on $r\in L$ that
\begin{equation}
\tag{M} \label{e:M}
1 + r + r\sigma(r)  + \ldots + r\sigma(r) \cdots \sigma^{n-2}(r) = 0.
\end{equation}
We call \eqref{e:M} the \emph{Murphy condition}.  Unfortunately, \eqref{e:M} alone does not force cyclic extensions.  In \S\ref{s:m4} we see that when $n=4$,  $L^{\sigma}$ can be a \emph{proper} extension of $\kk$, even with the additional conditions in Eq. \eqref{e:admiss}.

The following formal properties are immediate:
 \begin{prop}\label{p:norm1}
 Let $\kk, \,L, \sigma, \, r \,\text{and}\; n$ satisfy \eqref{e:M}.  Then 
\begin{enumerate}
\renewcommand{\labelenumi}{(\alph{enumi})}
\item  \eqref{e:M} holds if $r$ is replaced by $\sigma^{i}(r)$, $1\le i\le n-1$. 
\item  $r\sigma(r)\cdots\sigma^{n-1}(r) = 1$.
\end{enumerate}
 \end{prop}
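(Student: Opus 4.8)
The plan is to treat \eqref{e:M} as a statement about the partial products
$P_k = r\,\sigma(r)\cdots\sigma^{k-1}(r)$ (a product of $k$ factors, with the convention $P_0 = 1$), so that \eqref{e:M} reads simply $S := \sum_{k=0}^{n-1} P_k = 0$. Both parts then reduce to bookkeeping about how $\sigma$ shifts these partial products, and I expect no genuine obstacle, since the proposition is flagged as ``immediate.''

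For (a), I would apply the field automorphism $\sigma^{i}$ to both sides of \eqref{e:M}. Since $\sigma^{i}$ fixes $0$ and $1$ and respects sums and products, the left-hand side becomes $1 + \sigma^{i}(r) + \sigma^{i}(r)\sigma^{i+1}(r) + \cdots$, using $\sigma^{j}(\sigma^{i}(r)) = \sigma^{i+j}(r)$. The only point to check is that the $k$-th term $\sigma^{i}(P_k)$ of this image is exactly the $k$-th partial product built from $\sigma^{i}(r)$, so that the resulting true equation is verbatim the condition \eqref{e:M} with $r$ replaced by $\sigma^{i}(r)$; this is immediate from the definition of $P_k$.

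For (b), I would exploit the recursion $P_{k+1} = r\,\sigma(P_k)$, valid because pulling the leading factor $r$ out front of $P_{k+1}$ leaves $\sigma(r)\cdots\sigma^{k}(r) = \sigma(P_k)$. Summing gives the formal (telescoping) identity
\[
r\,\sigma(S) = r\sum_{k=0}^{n-1}\sigma(P_k) = \sum_{k=0}^{n-1} P_{k+1} = \sum_{j=1}^{n} P_j = S - P_0 + P_n = S - 1 + P_n,
\]
so that $r\,\sigma(S) - S = P_n - 1$ \emph{independently} of \eqref{e:M}. Now I invoke \eqref{e:M}: since $S = 0$ we also have $\sigma(S) = \sigma(0) = 0$, whence the left-hand side vanishes and $P_n = r\,\sigma(r)\cdots\sigma^{n-1}(r) = 1$, as claimed.

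The one thing requiring care throughout is the index bookkeeping --- keeping straight how many factors each partial product carries and where the $\sigma$-shift sends it. In particular, in (a) the powers $\sigma^{i}, \ldots, \sigma^{i+n-2}$ may wrap around modulo $n$, but this has no bearing on the identity, since applying an automorphism to a true equation yields a true equation regardless.
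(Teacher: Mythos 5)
Your proof is correct and follows essentially the same route as the paper: part (a) is just applying $\sigma^{i}$ to \eqref{e:M}, and your telescoping identity $r\,\sigma(S)-S=P_n-1$ is precisely the paper's ``apply $\sigma$ to \eqref{e:M}, multiply by $r$, and subtract \eqref{e:M}.'' The only difference is that you spell out the partial-product bookkeeping explicitly, which the paper leaves implicit.
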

 \begin{proof}
 For (a), apply $\sigma^{i}$ to \eqref{e:M}.  For (b), apply  $\sigma$ to \eqref{e:M}, multiply by $r$, and subtract \eqref{e:M}.
 \end{proof}
If $\kk$ is a number field, and $r$ is an \emph{algebraic integer} for which \eqref{e:M} holds, then by Prop.~\ref{p:norm1}(b), $r$ and its $\sigma$-conjugates are \emph{units}, which we call \emph{Murphy's units}.  They are quotients of unusual sets of conjugate associates (see the Remarks just before \S\ref{s:shenpolys}).

We take as a simple example the case $n=3$.  Then  \eqref{e:M} becomes
\begin{equation}
\tag{M3}\label{e:M3}
1 + r + r\sigma(r) = 0.
\end{equation}
Solving for $\sigma(r)$ and repeatedly applying $\sigma$, treating it as a field automorphism, we obtain the expressions
\begin{equation}
\tag{C3} \label{e:C3}
\sigma(r) = \frac{-r - 1}{r}, \; \sigma^{2}(r) = \frac{-1}{r + 1}, \; \sigma^{3}(r) = r.
\end{equation}
Setting
\begin{equation}
\tag{Tr3} \label{e:Tr3}
r+\sigma(r)+\sigma^2(r)= A, \; A \in \kk
\end{equation}
and clearing fractions, we find that any $r$ for which \eqref{e:M3} and \eqref{e:Tr3} hold is a zero of 
\begin{equation}\tag{P3}\label{e:P3}
p(x) = x^{3} - Ax^{2} - (A + 3)x - 1, \text{ for some }A\in\kk.
\end{equation}
By Proposition \ref{p:primelt}, \emph{every} cyclic cubic field extension has a defining polynomial of this form.  However, if we take $\kk=\Q$ and $A\in\Z$, $p(x)$ is irreducible $\pmod{2}$, so this restriction on the parameter produces a family of cyclic cubic fields in which $2$ remains inert, which is clearly \emph{not} true of all cyclic cubic fields.  In \cite{shanks:simpcub}, D. Shanks called the fields defined by \eqref{e:P3} with $A\in\Z$  the ``simplest'' cubic fields.  Certain families of cyclic number fields of degrees $4$, $5$, and $6$ have subsequently been dubbed ``simplest.''  They, too, have defining polynomials whose zeroes are units satisfying \eqref{e:M}.  We have the following result:
\begin{prop}\label{p:known}  Let $\kk$ be a field, $t\in\kk$.  In each of the following cases, $\sigma(x)$ \textup{(mod} $P(x)$\textup{)} makes  \eqref{e:M} a formal identity, with $n$ equal to the degree of $P(x)$.
\begin{align*}&\textup{a) } P(x)=x^{3} - tx^{2} - (t + 3)x - 1,\sigma(x)=(-x-1)/x;\\
&\textup{b) }P(x)=x^{4} - tx^{3} - 6x^{2} + tx + 1,\sigma(x)=(-x-1)/(x-1);\\
&\textup{c) }P(x)=x^4 + (t^2 + 2t + 4)x^3 + (t^3 + 3t^2 + 4t + 6)x^2\\
&+ (t^3 + t^2 + 2t + 4)x + 1,\\
&\sigma(x)= (-x^3 - (t^2 + 2t + 4)x^2 - (t^3 + 3t^2 + 4t + 5)x\\
&-t^3 - t^2 - 2t - 2)/t;\\
&\textup{d) }P(x)=x^{5} - t^{2}x^{4} - (2t^{3} + 6t^{2} + 10t + 10)x^{3}\\
& - (t^{4} + 5t^{3} + 11t^{2} + 15t + 5)x^{2}+ (t^{3} + 4t^{2} + 10t + 10)x - 1,\\
&\sigma(x)=((-t - 1)x^4 + (t^3 + 2t^2 + 3t + 3)x^3 \\
&+ (t^4 + 4t^3 + 9t^2 + 14t + 8)x^2- (t^4 + 7t^3 + 19t^2 + 29t + 19)x \\
&- t^4 - 6t^3 - 16t^2 - 20t - 9)/(t^3 + 5t^2 + 10t + 7);\;\text{and} \\
&\textup{e) }P(x)=x^{6} - 2tx^{5} -(5t + 15)x^{4} - 20x^{3} + 5tx^{2} + (2t + 6)x + 1,\\
&\sigma(x)=(-2x - 1)/(x - 1).
\end{align*}
\end{prop}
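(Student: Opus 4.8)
The plan is to verify each case by direct substitution in the quotient ring $L=\kk[x]/(P(x))$, exploiting the fact that the stated $\sigma(x)$ were obtained precisely by solving \eqref{e:M} for the successive conjugates of $r$, exactly as \eqref{e:C3} was obtained from \eqref{e:M3} in case (a). The one thing that must be checked is that the left-hand side of \eqref{e:M}, with $r=x$ and each $\sigma^i(r)$ replaced by $\sigma^i(x)\bmod P(x)$, collapses to $0$ in $L$; it is convenient to confirm along the way that $\sigma$ has order exactly $n$, so that the $\sigma^i(x)$ run through the $n$ conjugates of $x$. The iterates $\sigma^i(x)$ are produced by repeatedly substituting the given rational expression into itself and reducing modulo $P(x)$, after which the partial products $T_k=x\,\sigma(x)\cdots\sigma^{k-1}(x)$ are formed and added.

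For the three Möbius cases (a), (b), (e), $\sigma(x)=(ax+b)/(cx+d)$, so iteration is matrix multiplication and I attach to $\sigma$ the matrix $M_\sigma=\begin{pmatrix}a&b\\c&d\end{pmatrix}\in GL_2(\kk)$. The order of $\sigma$ equals the order of $M_\sigma$ in $PGL_2(\kk)$, which I read off from its characteristic polynomial: for (a), (b), (e) these are $\lambda^2+\lambda+1$, $\lambda^2+2\lambda+2$, and $\lambda^2+3\lambda+3$, whose two roots have ratio a primitive $n$th root of unity for $n=3,4,6$ respectively, giving $\sigma$ order $n$. The pleasant feature of these three cases is that \eqref{e:M} is in fact a rational-function identity in $\kk(x)$ — in case (a), for instance, $1+x+x\cdot\frac{-x-1}{x}=0$ outright — so no reduction modulo $P(x)$ is needed: after computing the iterates $\sigma^i(x)$ from the powers of $M_\sigma$ and forming the $T_k$, clearing the common denominator and expanding shows the numerator of the Murphy sum is identically zero. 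I should stress that this is special to the given $\sigma$, since a generic order-$n$ M\"obius map such as $x\mapsto\zeta x$ does \emph{not} satisfy \eqref{e:M}; it reflects how $\sigma$ was constructed rather than any automatic phenomenon.

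Cases (c) and (d) are the substantive ones, since there $\sigma(x)$ is a polynomial in $x$ (of degree $3$, resp.\ $4$) divided by an element of $\kk$, hence \emph{not} invertible as a rational map; consequently \eqref{e:M} holds only modulo $P(x)$ and all computations must genuinely take place in $L$. The plan is: (i) verify $P(\sigma(x))\equiv 0\pmod{P(x)}$, which certifies that $\sigma$ sends the root $x$ to another root and so extends to a $\kk$-endomorphism of $L$; (ii) compute $\sigma^2(x),\dots,\sigma^{n-1}(x)$ by successive substitution and reduction mod $P(x)$, confirm $\sigma^n(x)\equiv x$, and rule out smaller order (only $d=2$ needs excluding when $n=4$, while $n=5$ is prime, so it suffices that $\sigma\ne\mathrm{id}$); and (iii) form the partial products and sum. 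Step (iii) is eased by first recording the low-order relations one meets: for example in case (c), reducing $t\,x\,\sigma(x)$ by means of $x^4\equiv-(t^2+2t+4)x^3-\cdots$ yields the clean identity $x\,\sigma(x)\equiv (x+1)^2/t$, so that $T_2$ is already in hand, after which the remaining reductions are routine.

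The main obstacle is purely computational and concentrated in (c) and (d): the iterates $\sigma^i(x)$ and the partial products are polynomials in $x$ of high degree whose coefficients are themselves polynomials in $t$, and each must be reduced modulo $P(x)$ before they can be combined, with case (d) (where $n=5$ and $\sigma$ is quartic) the heaviest. Because the $\sigma(x)$ were manufactured by solving \eqref{e:M}, the final cancellation to $0$ is guaranteed in principle; the work lies in carrying out the reductions faithfully and in verifying that the order of $\sigma$ is exactly $n$ and not a proper divisor.
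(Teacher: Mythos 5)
Your proposal is correct and takes essentially the same route as the paper, which simply verifies (a), (b), (e) by hand and relegates (c), (d) to symbolic computation in Pari-GP; your observation that \eqref{e:M} holds as an identity in $\kk(x)$ for the three linear fractional cases is exactly the paper's own remark following the proposition. The extra structure you supply (reading off the order of $\sigma$ from the characteristic polynomial of its matrix, and checking $P(\sigma(x))\equiv 0 \pmod{P(x)}$ before iterating in cases (c) and (d)) is a sensible organization of the same direct verification.
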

\begin{proof}
For (a), (b), and (e), the fact that \eqref{e:M} becomes a formal identity is easily checked by hand.  The others can be checked with symbolic algebra software.  The author (with Phil Carmody's guidance) used Pari-GP.
\end{proof}
\begin{rems}
The term ``formal identity'' means that substituting $\sigma(x)$ and its compositional powers into \eqref{e:M} gives a quotient of polynomials in which $P(x)$ divides the numerator.  Using the linear fractional transformations $\sigma(x)$ which are independent of $t$ in (a), (b), and (e) and their compositional powers, \eqref{e:M} evaluates to $0$ at any $x$ for which all the terms in the sum are defined.

Shanks's 1974 paper \cite{shanks:simpcub} seems to be the first to refer to certain families of number fields as ``simplest.''   The polynomials in (a) had previously appeared in H. Cohn's 1956 paper \cite{cohn:cubics}. They yield explicit systems of independent units, a property which Shanks sought.  When this system is fundamental, the regulator is very small.  A great deal of research has been done on these cubics.

In \cite{wash:cubicclassnum}, L.C. Washington extended Uchida's work in \cite{uchida:cubics}, to force class numbers divisible by $n$ in the ``simplest'' cubic fields, and used elliptic curves to describe the $2$-Sylow subgroup of their class groups, and to exhibit explicit quartic extensions of these fields.

In \cite{morton:cubics}, Patrick Morton gave a parameterization of cyclic cubic fields based on automorphism polynomials, and obtained Shanks's simplest cubics by change of parameter.  Robin Chapman simplified Morton's proofs in \cite{chapman:cubics}.

E. Thomas proved in \cite{thomas:fundunicub} that if $r$ is a zero of a Shanks's simplest  cubic, then $\langle r, r + 1\rangle$ is a system of fundamental units for the order $\Z[r]$.

The fact that $r$ and $r+1$ are both units means that  $-r$ and $r+1$ are \emph{exceptional units} (two units adding to $1$) by  Nagell's definition in \cite{nagell:unites}.  D. Buell and V. Ennola studied the only other family of totally real cubic fields with exceptional units in \cite{buellenn:parmquadcub}.    In \cite{lenstra:euclidean}, H. Lenstra defined \emph{exceptional sequences} as finite sequences (of algebraic integers), the difference of any two of which is a unit.  Here, $0$, $1$, $r+1$ is a ``normalized'' $3$-term exceptional sequence.

The polynomials in (b) were constructed in  \cite{gras:tableclassunit}, while those in (e) were introduced (in a slightly different form) in \cite{gras:specunit}.  The quintics in (d) are of the form $-f(-x)$ where $f(x)$ is one of the quintics introduced by E. Lehmer in \cite{lehmer:cyclunits}.  She observed in this paper that the zeroes of the cubic polynomials in \cite{shanks:simpcub} and the quartic and sextic polynomials in \cite{gras:tableclassunit} and \cite{gras:specunit}  are, in the case of a prime conductor, integer translates of Gaussian periods, and obtained quintic units with the same property.    In  \cite{schoofwash:quintpoly} (Appendix), R. Schoof and L.C. Washington proved that the integer-translates property characterizes the ``simplest'' fields of degrees $2$, $3$, and $4$ with prime conductor.  In \cite{lazarus:classunitquart}, A. Lazarus applied the term ``simplest'' to these fields of degrees $2$ to $6$, as well as to the candidate family of ``simplest'' octic fields constructed by Y.Y. Shen in \cite{shen:uniclassoct}.

In Proposition \ref{p:ptwash} we show that for $t\in\kk - \{0, -2\}$, the polynomial $P(x)$  in (c), which falls out from \eqref{e:M} and the conditions in Eq. \eqref{e:admiss} with $n=4$, defines the same extension of $\kk$  as the polynomial 
\begin{equation}\label{e:washpolys}
f_{t}(x) = x^{4}-t^{2}x^{3}-(t^{3}+2t^{2}+4t+2)x^{2}-t^{2}x+1
\end{equation}
which L.C. Washington constructed in \cite{wash:quartics}.  In \cite{morton:quartics}, Patrick Morton proved the equivalence of these fields with cyclic quartic fields whose Galois groups have a quadratic (rather than cubic) generating automorphism polynomial.

In \cite{shanks:simpcub} Shanks also proposed ``simplest'' quadratic fields defined by the polynomials $x^{2}=ax+1$, $a\in\Z$.  These quadratic fields have a special significance with respect to \eqref{e:M}.  For if $v^{2}-tv - 1=0$, $t\in\Z-\{0,-2\}$, the cyclic quartic field $L$ defined by $f_{t}(x)$ contains $v$, and if $G(L/\Q)=\langle\sigma\rangle$, we have
\begin{equation}\label{e:normneg1}
1+v+v\sigma(v)+v\sigma(v)\sigma^{2}(v)=1 + v + (-1) +(-1)v=0.
\end{equation}
That is, quadratic units of norm $-1$ (unlike those of norm $+1$), satisfy \eqref{e:M} with $n=4$ when embedded in a cyclic quartic field (such as a Washington's cyclic quartic field).  We give a generalization of this phenomenon based on \eqref{e:M}, in the Remarks just before \S \ref{s:shenpolys}.

The ``simplest'' number fields of degrees 3, 4, and 6 were studied further by G. Lettl, A. Peth\H{o}, and P. Voutier in \cite{lettpetvou:simplethue} and \cite{lettpetvou:arith}.  A. Lazarus studied the unit groups and class numbers of the ``simplest'' quartic fields  in \cite{lazarus:classunitquart}.  

L. C. Washington used coverings of modular curves in \cite{wash:quartics} to construct his family of cyclic quartic fields.  He observed that the ``simplest'' fields of degree $2$, $3$, $4$, $5$, and $6$ can be constructed by the same method.   (See  \cite{darmon:note} with regard to Emma Lehmer's quintics.)  He further observed that in all these cases, the coverings have genus $1$.  In contrast, the cyclic sextic fields constructed by O. Lecacheux in \cite{lecacheux:sextics}, use a covering of genus $2$.

In \cite{louboutin:effcomp}, S. Louboutin obtained explicit formulas for powers of Gaussian sums attached to the ``simplest'' fields of degrees $2$ to $6$, and used them to give efficient computations of  their class numbers.
\end{rems}
In \S\ref{s:shenpolys}, we construct a $1$-parameter family of polynomials of degree $n$ in $\Q(\cot(\pi/n))[x]$, whose zeroes are permuted cyclically by a linear fractional transformation $\lambda$ of compositional order $n$ for each $n>1$, generalizing a construction in \cite{shen:uniclassoct}.  When $n=4m$, $\sigma=\lambda^{-1}$ makes \eqref{e:M} a formal identity.

For $n>3$ we cannot simply ``solve'' \eqref{e:M}  for $\sigma$ as we did in \eqref{e:C3}; but we  would like to obtain as general an algebraic map of compositional order $n$ as possible, that makes \eqref{e:M} a formal identity.  So,  we impose purely  algebraic conditions which always hold when $\kk(r)/\kk$ is cyclic of degree $n$ with $G=\langle\sigma\rangle$:
\begin{equation}\label{e:admiss}
\sum_{i = 0}^{\frac{n}{d}-1}\sigma^i\left(\prod_{j=0}^{d-1}\sigma^{\frac{jn}{d}}(r)\right) \in \kk,\text{ for each divisor }d\text{ of } n.
\end{equation}
These conditions do not depend on the choice of cyclic generator for $\langle\sigma\rangle$ but (as mentioned after Proposition \ref{p:known}), do admit extensions in which $L^{\sigma}$ is a \emph{proper} extension of $\kk$.

In \S\ref{ss:formconst}, using only \eqref{e:M} for $n=4$, the condition that the map $\sigma$ fixes the ground field elementwise, the conditions \eqref{e:admiss}, and elementary algebra, we obtain a linear fractional map \eqref{e:C4} for $\sigma$, and a 2-parameter family of  monic octics $T(m,A,x)$ ($m,A\in\kk$),  with ``generic'' Galois group $G\cong$ $_{8}T_{11}$ (see below).  We let $L/\kk$ denote the splitting field of $T(m,A,x)$.

$L/\kk$ contains (\S\ref{ss:quadfacts}) the elementary Abelian extension $E=\kk(s,w,y)$ of $\kk$, where $s^{2}=m^{2}-4$, $w^{2}=(m+2+A)^2-4(m-2)$, and $y^{2}=A^{2}-4(m-2)$. When  $[E\!:\!\kk]=8$, $G \cong$ $ _{8}T_{11}$, and $E$ contains $7$ quadratic extensions of $\kk$.  In Theorem \ref{t:galgp1} we describe the subgroups of $_{8}T_{11}$ fixing each of these.

The description of the Galois group is greatly facilitated by the fact that  the \emph{related} octics $T(m,A,x)$ and $T(m,-m-2-A,x)$ have the same splitting field over $\kk$ (Theorem \ref{t:relocts}).  When $[L\!:\!\kk]=8$, at least one of these octics is a defining polynomial for the splitting field.

The cyclic quartics for which our  map $\sigma$ makes \eqref{e:M} a formal identity with $n=4$, occur in \emph{pairs}.  They are the quartic factors of $T(m,A,x)$ in $\kk(sw)[x]$, so typically define cyclic quartic extensions of $\kk(sw)$, not of $\kk$.  But when $sw\in\kk$, they are in $\kk[x]$, and we call them \emph{Murphy's twins}. They ``generically'' define distinct cyclic quartic extensions of $\kk$, both containing $\kk(s)$.  The ``simplest'' quartic fields and Washington's cyclic quartic fields are ``degenerate'' cases (with $\mathbf{k}=\mathbb{Q}$) for which $s=0$ and $w=0$, respectively.  In these cases the ``twins'' are identical.  We construct other Murphy's twins extensions of $\mathbb{Q}$ in \S \ref{ss:murphtwins} using standard results on norms from real quadratic fields.

By ``collapsing'' other quadratic extensions of $\kk$ in $E$, we construct families of polynomials defining normal octic extensions with $G\cong D_{8}(8)$, $Q_{8}$, and $C_{4}\times C_{2}$; and quartics with $G\cong D_{4}$ and $V_{4}$.

The ``generic'' Galois group $_{8}T_{11}$ is an order-16 transitive subgroup of $A_{8}$; $_{8}T_{11}\cong$ GAP small group $\langle 16,13\rangle$.  It has one maximal subgroup $\cong Q_{8}$ (quaternion group), three  $\cong D_{4}$, and three $\cong C_{4} \times C_{2}$.  It has presentation
$$\langle a,b,c\rangle:\;a^{4}=b^{2}=c^{2}=1,ab=ba,\;ac=ca,\;(bc)^{2}=a^2.$$

It is also known as the ``almost extraspecial group of order 16.''  Derek Holt (\cite{holt}) described almost extraspecial $p$-groups as \emph{central products}.  A central product is an ``amalgamated product'' (see \cite{hall:thrgroups}), in which the subgroups being identified are in the centers of the factors.  In this particular case,
$$_{8}T_{11} \cong C_{4} \vee D_{4} \cong C_{4} \vee Q_{8}.$$

Our octics produce (Propositions \ref{p:unitassoc} and \ref{p:constell}) some unusual sets of units and associates.  Specifically (Proposition \ref{p:unitassoc}(b)), if $m,A\in\mathcal{O}_{\kk}$ for a number field $\kk$, and $m-2\in\mathcal{O}_{\kk}^{\times}$, then each zero of $T(m,A,x)$ is part of an \emph{exceptional sequence of three units}; that is, three units, the difference of any two of which is also a unit.  In particular, $T(1,A,x)$ and $T(3,A,x)$, $A\in\Z$, are one-parameter families in $\Z[x]$ whose zeroes have this property.  An infinite subfamily of $T(3,A,x)$ gives  (Eqs. \eqref{e:sw1} - \eqref{e:barsw2}) Murphy's twins  cyclic quartic fields with exceptional sequences of three units.  When $\kk=\Q$ and $m,A\in\Z$, there are (\S\ref{ss:regs}) explicit sets of $3$ independent units.  If $T(m,A,x)$ defines one or more number fields whose units groups have rank $3$, these can produce rather small regulators.  We also obtain (Eq. \eqref{e:unitindex}) a unit index formula which may be of interest.

The terms ``exceptional units,'' ``exceptional sequences,'' and ``cliques'' (of units) arose as follows: In \cite{nagell:unites}, Nagell called either of two units whose sum is 1, \emph{exceptional units}.  In \cite{lenstra:euclidean}, Lenstra used finite sequences of algebraic integers, the difference of any two of which is a unit, to construct Euclidean fields.  He observed that by applying an appropriate affine transformation, the first two terms of the sequence become 0 and 1, and any further terms become exceptional units.  Thus, Lenstra's sets generalize the concept of exceptional units.  In \cite{leutmart:lenstconst}, A. Leutbecher and J. Martinet called them \emph{exceptional sequences}.  In \cite{leut:euclenstra}, Leutbecher entitled Section 1 ``The graph of exceptional units.''  This applied an idea of Gy\H{o}ry, that defining two elements of a commutative ring $R$ as being ``connected'' when their difference is a unit, induces a graph structure on $R$ (see, e.g.\ \cite{leutnikl:cliquesexcuni}, reference [G3]).  Leutbecher and G. Niklasch used the graph-theoretic term ``cliques'' in this context in \cite{leutnikl:cliquesexcuni}.
%
%begin new section
%
\section{Basic formal properties}\label{s:basic}
Proposition \ref{p:norm1} gave some very simple formal properties implied by \eqref{e:M}.  We give several more.  The first of these applies to non-Abelian extensions.
\begin{prop}\label{p:nonabel}
Let $L/\kk$ be a finite Galois extension with Galois group $G$, and assume $\sigma\in G$, $r$,and $n$ satisfy \eqref{e:M}.  If $\gamma \in C_{G}(\langle\sigma\rangle)$, then \eqref{e:M} holds for $\gamma(r)$.
\end{prop}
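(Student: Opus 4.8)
The plan is to apply the automorphism $\gamma$ directly to the Murphy condition \eqref{e:M} and check that what comes out is exactly \eqref{e:M} for $\gamma(r)$. First I would record \eqref{e:M} in the compact form $\sum_{k=0}^{n-1}\prod_{j=0}^{k-1}\sigma^{j}(r)=0$, where the $k=0$ summand is the empty product $1$. Since $\gamma$ is a field automorphism of $L$, it is additive and multiplicative, so applying it term-by-term yields $\sum_{k=0}^{n-1}\prod_{j=0}^{k-1}\gamma\bigl(\sigma^{j}(r)\bigr)=\gamma(0)=0$.

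The key step is to commute $\gamma$ past the iterates of $\sigma$. Because $\gamma\in C_{G}(\langle\sigma\rangle)$, we have $\gamma\sigma=\sigma\gamma$, and hence $\gamma\sigma^{j}=\sigma^{j}\gamma$ for every $j$; thus $\gamma(\sigma^{j}(r))=\sigma^{j}(\gamma(r))$. Substituting this into the displayed identity turns it into $\sum_{k=0}^{n-1}\prod_{j=0}^{k-1}\sigma^{j}(\gamma(r))=0$, which is precisely \eqref{e:M} with $r$ replaced by $\gamma(r)$. This completes the argument.

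I do not expect a genuine obstacle here: \eqref{e:M} is a polynomial expression in $r$ and its $\sigma$-iterates, and applying an automorphism that commutes with $\sigma$ merely relabels those iterates while preserving the additive and multiplicative structure. The single place where the hypothesis is used is the commutation $\gamma\sigma^{j}=\sigma^{j}\gamma$; without $\gamma$ lying in the centralizer of $\langle\sigma\rangle$, the quantity $\gamma(\sigma^{j}(r))$ would not collapse to $\sigma^{j}(\gamma(r))$ and the terms would fail to reassemble into \eqref{e:M} for $\gamma(r)$. One caveat worth stating explicitly in the write-up is that $C_{G}(\langle\sigma\rangle)$ consists of those $\gamma$ commuting with \emph{all} of $\langle\sigma\rangle$, so commutation with each power $\sigma^{j}$ is immediate (indeed commuting with $\sigma$ alone already forces it).
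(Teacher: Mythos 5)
Your proof is correct and is essentially the paper's own argument: apply the automorphism $\gamma$ to \eqref{e:M} and use $\gamma\sigma=\sigma\gamma$ to get $\gamma(\sigma^{j}(r))=\sigma^{j}(\gamma(r))$, so the terms reassemble into \eqref{e:M} for $\gamma(r)$. The paper states this in one line; you have merely written out the same steps in more detail.
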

\begin{proof}Since $\gamma\sigma=\sigma \gamma$, we have $\gamma(\sigma^{i}(r))=\sigma^{i}(\gamma(r))$ for all $i$.
\end{proof}
Next, \eqref{e:M} does not depend on the choice of generator for $\langle\sigma\rangle$.
\begin{prop}\label{p:diffgen}
Assume $\kk, \,L,r, \sigma,\,\text{and}\; n>2$ satisfy \eqref{e:M}.   If \\$1 \le k  < n, \, (k, n) = 1$,  $\eta = \sigma^{k}$, and $y_{k}= r\sigma(r) \cdots \sigma^{k-1}(r)$, then 
$$1 + y_{k} + y_{k}\eta(y_{k}) + \ldots + y_{k}\eta(y_{k}) \cdots \eta^{n-2}(y_{k}) = 0.$$
\end{prop}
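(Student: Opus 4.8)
The plan is to reduce the claimed identity to \eqref{e:M} itself, by recognizing that the $n$ summands of the $\eta$-sum are, up to reindexing, exactly the $n$ summands of the original $\sigma$-sum.

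First I would fix notation for the partial products appearing in \eqref{e:M}. For $0 \le j \le n$ set $P_j = r\,\sigma(r)\cdots\sigma^{j-1}(r) = \prod_{i=0}^{j-1}\sigma^{i}(r)$, with $P_0 = 1$. Then \eqref{e:M} reads $\sum_{j=0}^{n-1} P_j = 0$, and Proposition \ref{p:norm1}(b) gives $P_n = 1$; applying $\sigma^{a}$ shows more generally that any product of $n$ consecutive conjugates, $\prod_{i=a}^{a+n-1}\sigma^{i}(r)$, equals $1$. Note that $y_k = P_k$, and that since $(k,n)=1$ the element $\eta=\sigma^{k}$ again has order $n$, so the asserted identity for $\eta$ is a genuine $n$-term Murphy sum.

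The heart of the argument is to evaluate the general summand $Q_m := y_k\,\eta(y_k)\cdots\eta^{m-1}(y_k) = \prod_{\ell=0}^{m-1}\eta^{\ell}(y_k)$ for $0 \le m \le n-1$. Since $\eta^{\ell}(y_k) = \sigma^{k\ell}(P_k) = \prod_{i=k\ell}^{k\ell+k-1}\sigma^{i}(r)$, these blocks of $k$ consecutive conjugates abut to give a single contiguous product $Q_m = \prod_{i=0}^{mk-1}\sigma^{i}(r)$. Writing $mk = qn + s$ with $0 \le s < n$, the first $q$ blocks of $n$ consecutive factors each collapse to $1$ by the observation above, leaving $Q_m = P_s = P_{\,mk \bmod n}$.

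It then remains to sum. Because $(k,n)=1$, the map $m \mapsto mk \bmod n$ permutes $\{0,1,\ldots,n-1\}$, so $\sum_{m=0}^{n-1} Q_m = \sum_{s=0}^{n-1} P_s = 0$ by \eqref{e:M}, which is precisely the claim (the summand $Q_0=1$ matching $P_0=1$). The only step demanding care is the telescoping/wrap-around bookkeeping: one must verify that concatenating the length-$k$ blocks really yields the unbroken run $\sigma^{0}(r),\ldots,\sigma^{mk-1}(r)$, and that the reduction modulo $n$ (using $\sigma^{qn}=\mathrm{id}$ together with the vanishing of full-cycle products) is applied consistently across all $m$. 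Everything else is formal manipulation.
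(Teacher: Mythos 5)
Your proof is correct and complete; note that the paper itself offers no proof to compare against (it is ``left as an exercise''), and your argument is surely the intended one. The key steps all check out: the blocks $\eta^{\ell}(y_k)=\sigma^{k\ell}(r)\cdots\sigma^{k\ell+k-1}(r)$ do concatenate into the unbroken run $\prod_{i=0}^{mk-1}\sigma^{i}(r)$, each full cycle of $n$ consecutive conjugates collapses to $1$ by applying $\sigma^{a}$ to Proposition \ref{p:norm1}(b), and $(k,n)=1$ makes $m\mapsto mk\bmod n$ a permutation of the summands of \eqref{e:M}. It is worth observing that your argument is purely formal and works in the full generality of the proposition's hypotheses (any finite Galois extension with $\sigma$ of order $n$), whereas the alternative suggested by the Remark following Proposition \ref{p:primelt} --- writing $r=\sigma(z)/z$ with $z$ in the kernel of the trace, so that $y_k=\eta(z)/z$ and the $\eta$-sum telescopes to $\mathbf{Tr}_{L/\kk}(z)/z=0$ --- is quicker but presupposes the cyclic case and HT90.
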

\begin{proof}
The proof is left as an exercise.
\end{proof}
The $y_{k}$ are zeroes of  the degree-$n$ polynomials
\begin{equation}\label{e:diffpols}f_{k}(x)=\prod_{i=1}^{n}(x-\sigma^{i}(y_{k})).
\end{equation}
We thus obtain a set of $\varphi(n)$ polynomials.  Mutually inverse generators of $\langle\sigma\rangle$ give polynomials with mutually reciprocal zeroes.
\begin{exmps}
Applying Eq.\ \eqref{e:diffpols} to $f_{1}(x)=P(x)$ and $\sigma$ as in Proposition \ref{p:known}(d), we obtain the alternate defining polynomial
\begin{align*}&f_{2}(x)=x^5 + (t^3 + 3t^2 + 5t + 5)x^4 - (t^4 + 3t^3 + 7t^2 + 5t + 5)x^3\\
&- (t^4 + 5t^3 + 17t^2 + 25t + 25)x^2 - (2t^2 + 5t + 10)x - 1.\end{align*}
Eq.\ \eqref{e:diffpols} gives $f_{2}(x)$, $f_{3}(x)=-x^{5}f_{2}(1/x)$, and $f_{4}(x)=-x^{5}f_{1}(1/x)$ in addition to $f_{1}(x)$.

The zeroes of the octic polynomials studied by Y.Y. Shen in \cite{shen:uniclassoct},
$$P(a,x)=x^8 - ax^7 - 28x^6 + 7ax^5 + 70x^4 - 7ax^3 - 28x^2 + ax + 1,\;a\in\Z,$$
are permuted cyclically by the compositional powers of the algebraic map
$$\sigma: x\mapsto (-\xi x-1)/(x-\xi),\text{ taking }\xi\pmod{\xi^{2}-2\xi-1}.$$
It may be verified directly that \eqref{e:M} with $n=8$ is a formal identity for the compositional powers of this algebraic map.  Shen showed that $P(a,x)$ defines cyclic octic fields when $a^{2}+64\in2\Z^{2}$, and investigated the properties of these fields.  For such $a$, $P(a,x)$ is irreducible in $\Q[x]$ but \emph{not} in $\Q(\xi)[x]$.  In this case, the \emph{automorphism} defined by $\sigma$ maps $\xi$ to $-1/\xi$, so its compositional powers occur in a different order than those of the algebraic map.  If we assume that $\kk=\Q(\xi)$, $a\in\mathcal{O}_{\kk}$, and $P(a,x)$ is irreducible in $\kk[x]$, then it defines a cyclic octic extension of $\kk$ with Galois group $\langle\sigma\rangle$, and its zeroes are \emph{units} which satisfy \eqref{e:M} with $n=8$.  The coefficients of  $f_{3}(x)$ and $f_{5}(x)$ as in Eq. \eqref{e:diffpols} are not formally in $\Z[a]$; for instance, the coefficient of $x^{5}$ in $f_{3}(x)$ is $(12a^2 + 768)\xi + a^3 - 12a^2 + 57a - 768$.
\end{exmps}
From Proposition \ref{p:norm1}(b) and \ref{e:ht90}, we see that if $L/\kk$ is cyclic of degree $n$ with Galois group $\langle\sigma\rangle$, any $r$ for which \eqref{e:M} holds is of the form $\alpha/\sigma(\alpha)$, $\alpha\in L$.  If $\alpha=1/z$, we have $r=\sigma(z)/z$.  Substituting this expression into \eqref{e:M}, the products in each term telescope, giving
$$(z + \sigma(z)+\ldots+\sigma^{n-1}(z))/z=0.$$
The numerator is $\mathbf{Tr}_{L/\kk}(z)$.  Thus, in a cyclic extension, an element which satisfies \eqref{e:M} is of the form $\sigma(z)/z$ where $z$ is \emph{in the kernel of the trace}.  For cyclic extensions of degree $n>2$, primitive elements of this form always exist.
\begin{prop}\label{p:primelt}
Let $L/\kk$ be a cyclic extension of degree $n$ with $G=\langle\sigma\rangle$.  If $n>2$, there is $r\in L$ with $L=\kk(r)$ for which \eqref{e:M} holds.
\end{prop}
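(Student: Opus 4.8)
The plan is to use the reduction established in the paragraph preceding the proposition: in a cyclic extension, an element $r$ satisfies \eqref{e:M} precisely when $r=\sigma(z)/z$ for some $z$ in the kernel of $\mathbf{Tr}_{L/\kk}$. Indeed, with $r=\sigma(z)/z$ the products in \eqref{e:M} telescope, $r\sigma(r)\cdots\sigma^{k-1}(r)=\sigma^{k}(z)/z$, so the left side of \eqref{e:M} equals $\mathbf{Tr}_{L/\kk}(z)/z$, which vanishes exactly when $z$ lies in the trace kernel. Thus it suffices to exhibit a single nonzero $z$ with $\mathbf{Tr}_{L/\kk}(z)=0$ for which the associated $r=\sigma(z)/z$ generates $L$ over $\kk$.

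First I would record a primitivity criterion. Since $G=\langle\sigma\rangle$ is cyclic of order $n$, the Galois correspondence gives $\kk(r)=L$ if and only if the stabilizer of $r$ in $G$ is trivial; and a subgroup of $G$ is trivial if and only if it contains none of the prime-order subgroups $\langle\sigma^{n/p}\rangle$ ($p\mid n$ prime). Hence $\kk(r)=L$ if and only if $\sigma^{n/p}(r)\ne r$ for every prime $p\mid n$. Writing $d=n/p$ and $r=\sigma(z)/z$, the equation $\sigma^{d}(r)=r$ cross-multiplies to $z\,\sigma^{d+1}(z)=\sigma(z)\,\sigma^{d}(z)$, which is exactly the statement $\sigma\!\left(\sigma^{d}(z)/z\right)=\sigma^{d}(z)/z$, i.e. $\sigma^{d}(z)/z\in\kk$ (the fixed field of $\sigma$). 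Therefore $\kk(r)=L$ if and only if $z$ and $\sigma^{n/p}(z)$ are linearly independent over $\kk$ for every prime $p\mid n$.

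To produce such a $z$ I would invoke the Normal Basis Theorem to choose $\theta\in L$ with $\{\theta,\sigma(\theta),\dots,\sigma^{n-1}(\theta)\}$ a $\kk$-basis of $L$, and set $z=\theta-\sigma(\theta)$. Then $z\ne 0$, and since $\mathbf{Tr}_{L/\kk}\circ\sigma=\mathbf{Tr}_{L/\kk}$ we get $\mathbf{Tr}_{L/\kk}(z)=0$, so $r=\sigma(z)/z$ satisfies \eqref{e:M}. It remains to check that $z$ and $\sigma^{d}(z)=\sigma^{d}(\theta)-\sigma^{d+1}(\theta)$ are independent for each $d=n/p$. In normal-basis coordinates $z$ is supported on the indices $\{0,1\}$ and $\sigma^{d}(z)$ on $\{d,d+1\}\pmod n$. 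Because $n>2$ forces $d=n/p\le n/2<n-1$, these index sets are disjoint when $d\ge 2$, so the two vectors live in complementary coordinate subspaces and are independent; and in the remaining case $d=1$ (which occurs only when $n=p$ is prime) one checks directly that $e_{0}-e_{1}$ and $e_{1}-e_{2}$ are independent, using $n>2$ to ensure the index $2$ is distinct from $0$. This yields $\kk(r)=L$.

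The main obstacle is exactly this last independence verification, and it is where the hypothesis $n>2$ is essential: for $n=2$ every trace-zero $z$ satisfies $\sigma(z)=-z$, forcing $r=-1\in\kk$, so no primitive element of the form $\sigma(z)/z$ can exist. Correspondingly, the construction $z=\theta-\sigma(\theta)$ degenerates at $n=2$ (where $\{d,d+1\}=\{1,0\}$ coincides with $\{0,1\}$), and I expect everything else to be routine once the telescoping reduction and the stabilizer criterion are in place.
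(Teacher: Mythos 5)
Your proof is correct, but it takes a genuinely different route from the paper's. Both arguments start from the same reduction: \eqref{e:M} holds for $r=\sigma(z)/z$ exactly when $z$ lies in the kernel of the trace, so the task is to find a trace-zero $z$ with $\sigma(z)/z$ primitive. From there the paper argues non-constructively that the ``bad'' $z$ cannot exhaust the trace kernel: for finite $\kk$ it counts the $1$-dimensional subspaces of the kernel against the number of non-primitive elements, and for infinite $\kk$ it shows the bad $z$ lie in a finite union of proper subspaces, which cannot cover a vector space over an infinite field. You instead exhibit an explicit $z$: taking a normal basis generator $\theta$ and $z=\theta-\sigma(\theta)$, you reduce primitivity of $\sigma(z)/z$ to the linear independence of $z$ and $\sigma^{n/p}(z)$ for each prime $p\mid n$ (via the correct observation that $\sigma^{d}(r)=r$ iff $\sigma^{d}(z)/z\in\kk$), and this independence is immediate from the disjointness (or, when $n$ is prime, near-disjointness) of the supports $\{0,1\}$ and $\{n/p,\,n/p+1\}$ in normal-basis coordinates --- your case analysis here is complete and the hypothesis $n>2$ enters exactly where it should. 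What each approach buys: yours is uniform over finite and infinite ground fields and is constructive once a normal basis generator is in hand, at the cost of invoking the Normal Basis Theorem; the paper's avoids that theorem entirely but pays with the case split and two separate counting/covering arguments.
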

\begin{proof}
The kernel of the trace from $L$ to $\kk$ is a $\kk$-vector subspace $V$ of $L$ of dimension $n-1$.  The distinct elements $\sigma(z)/z$, $z\in V-\{0\}$, correspond in an obvious way to the $1$-dimensional $\kk$-subspaces of $V$.

If $\kk$ is a finite field with $q$ elements, the number of such subspaces is
$$(q^{n-1}-1)/(q-1) = 1+q+\ldots+q^{n-2}.$$
The usual formula for the number of monic irreducible polynomials of degree $n$ in $\kk[x]$ shows that there are fewer than
$$1 + q + \ldots +q^{n/\ell}$$
nonzero \emph{non}primitive elements for $L/\kk$, where $\ell$ is the least prime factor of $n$.  If $n>2$, the number of distinct elements $\sigma(z)/z$, $z\in V-\{0\}$, is thus too large for them all to be non-primitive.

Now suppose $\kk$ is infinite and $n>2$.  Let $d\mid n$, $d>1$, and suppose $z\in V$ with $\sigma(z)/z=w\in F$, where $F$ is the intermediate field with $L/F$ of degree $d$.  Then $\mathcal{N}_{F/\kk}(w)=\sigma^{n/d}(z)/z=W$; $\mathcal{N}_{L/\kk}(W)=W^n=1$; there are at most $n$ such $W$.  The distinct values of $\sigma^{n/d}(z)/z$ correspond to one-dimensional $F$-vector subspaces of $L$; these are $n/d$-dimensional $\kk$-vector spaces.  Now $n/d<n-1$  for $d>1$ and $n>2$. Thus, the values $r=\sigma(z)/z$, $z\in V$, for which $\kk(r)\neq L$, correspond to the $1$-dimensional subspaces of a finite union of \emph{proper} $\kk$-subspaces of $V$.  It is well-known that when $\kk$ is infinite, no $\kk$-vector space is a finite union of proper subspaces, and the result follows.
\end{proof}
\begin{rems}
If $r = \sigma(z)/z$ satisfies \eqref{e:M}, then in Proposition \ref{p:diffgen}, $y_{k} = \sigma^{k}(z)/z$.

When the ground field $\kk$ contains a primitive $m$th root of unity $\zeta\ne 1$, $F=\kk(r)$ is a cyclic extension of degree $d$, and $\mathcal{N}_{F/\kk}(r)=\zeta$, then embedding $F$ in a cyclic extension of degree $md$ over $\kk$ forces $r$ to satisfy \eqref{e:M} with $n=md$, as occurs in Eq.\ \eqref{e:normneg1} with $m=2$ and $\zeta=-1$.

If $\kk$ is a number field and $r$ is a ``Murphy's unit,'' we may take $z$ so its $\sigma$-conjugates are  \emph{associate algebraic integers in the kernel of the trace}.  Among cyclic cubic extensions of $\Q$, only Shanks's simplest cubic fields possess such $z$.  If $P(r)=0$ in Prop.~\ref{p:known}(a), $r=\sigma(z)/z$ for $z=2r^{2} - (2t+ 1)r -t - 4$.  Noting that $(t^2+3t+9)^{2}=\text{disc}(x^{3} - tx^{2} - (t + 3)x - 1)$, we have
$$z^3-(t^2 + 3t + 9)z+t^2 + 3t + 9=0.$$
\end{rems}
%
%begin new section
%
\section{Shen's polynomials}\label{s:shenpolys}
In \cite{shen:uniclassoct}, Y.Y. Shen constructed polynomials of $2$-power degree, generalizing the $1$-parameter family of defining polynomials for the octic fields he investigated.  His construction used the polynomials
\begin{equation}\label{e:shen1}
Q_{n}(x)=\Re((x+i)^{n})\text{ and }V_{n}(x)=\Im((x+i)^n).
\end{equation}
His work (Proposition 3(b) and (c) of \cite{shen:uniclassoct}) shows that for $n=2$ and $n=4$, the polynomials
\begin{equation}\label{e:shen0}
P_{n}(a,x)=Q_{n}(x)-\frac{a}{n}V_{n}(x)
\end{equation}
give the usual defining polynomials for the ``simplest'' quadratic and quartic fields.  He obtained his octic fields by investigating \eqref{e:shen0} for $n=8$, and observed that taking $n=2^k$ gives a $1$-parameter family with similar properties.

Here, we generalize Shen's construction to polynomials of degree $n$ for all $n>1$. Clearly
$$Q_{n}(x)=\sum_{0\le 2k\le n}(-1)^{k}\binom{n}{2k}x^{n-2k}\text{ , }V_{n}(x)=\sum_{0\le 2k<n}(-1)^{k}\binom{n}{2k+1}x^{n-2k-1}$$
so $Q_{n}(x)$ is monic of degree $n$, and $V_{n}(x)$ is of degree $n-1$, with leading coefficient $n$. The $\gcd$ of the coefficients of $V_{n}(x)$ is $2^{v_{2}(n)}$, where $n/2^{v_{2}(n)}$ is an odd integer.  The proof is left as an exercise.  We take
\begin{defn}\label{d:coeffs}With $Q_{n}(x)$ and $V_{n}(x)$ as in Eq.~\eqref{e:shen1}, and $n\in\Z^{+}$, set
\begin{equation}\label{e:shenpolys}
P_{n}(a,x)=Q_{n}(x)-\frac{a}{2^{v_{2}(n)}}V_{n}(x).
\end{equation}
\end{defn}
Then $P_{n}(a,x)$ is monic of degree $n$.  For $0\le k\le n$, the coefficient of $x^{n-k}$ is in $\Z$ if $k$ is even, and in $\Z\cdot a$ if $k$ is odd.  Clearly $V_{2n}(x)=2Q_{n}(x)V_{n}(x)$, so $V_{2^{t}}(x)=2^{t}Q_{1}(x)Q_{2}(x)\cdots Q_{2^{t-1}}(x)$ for $t\in\Z^{+}$ by induction.  Thus, when $n=2^{t}$, $P_{n}(a,x)$ coincides with the degree-$2^{t}$ polynomial constructed in \cite{shen:uniclassoct}.
For each $n>1$, Definition \ref{d:coeffs} gives a $1$-parameter family of monic polynomials in $\Z[a][x]$.  The useful formula
\begin{equation}\label{e:multcot}
 Q_{n}(\cot(\theta))/V_{n}(\cot(\theta))=\cot(n\theta)\text{ for all }n\in\Z^{+}
\end{equation}
follows directly from Eq. \ref{e:shen1}.  Taking $x=\cot(\theta)$, we see that the zeroes of $P_{n}(a,x)$ are permuted cyclically by the linear fractional transformation $\lambda$ in \eqref{e:twomaps}.
The results in this section are thus very similar to those in \cite{rikuna:simplecyclic}, but are less general.  We assume our parameter to be in the complex field $\C$.  This allows us to ``cheat'' by invoking properties of $\cot(\theta)$ as a periodic  meromorphic function, and to exploit the fact that  $\lambda$ corresponds to adding a division point to the argument of this function.  Also, we assume the ground field (field of coefficients of $\lambda$) contains $K=\Q(\cot(2\pi/n))$ rather than $k=\Q(\cos(2\pi/n))$ as in \cite{rikuna:simplecyclic}.   Restricting our parameter to $\mathcal{O}_K$, produces polynomials of degree $n$ in $K[x]$ which define cyclic extensions of $K$ with several formal properties like those of the ``simplest'' quartic fields (Theorem \ref{t:shenmain}).  It is easily shown that $K=\Q(\cos(2\pi/N))$ where $N=\textup{lcm}(n,4)$.  Thus, $K= k$  as in \cite{rikuna:simplecyclic} precisely when $4\mid n$.  In this case, Proposition \ref{p:shenht90} shows that \eqref{e:M} holds with $\sigma=\lambda^{-1}$. The irreducible factors of $P_{n}(a,x)$ in $K[x]$ all have the same degree $d\mid n$.  If $d$ is a proper divisor of $n$, the ``rescaled'' polynomial of degree $n/d$ splits into linear factors in $K[x]$ (Prop. \ref{p:compsplit}).

The following properties are easily obtained:
\begin{prop}\label{p:polyidents}
Let $n\in\Z^{+}$ and  $a\in\C$.
\begin{enumerate}
\renewcommand{\labelenumi}{(\alph{enumi})}
\item  If $a\ne \pm 2^{v_{2}(n)}i$, $P_{n}(a,x)$ has $n$ distinct zeroes $x=\cot(\theta)$ for which $\cot(n\theta)=a/2^{v_{2}(n)}$.  The zeroes are real if $a$ is real.  If $\cot(\theta_{1})$ is one of them, all are given by $\cot(\theta_{1}+k\pi/n)$, $0\le k\le n-1$.
\item The discriminant of $P_{n}(a,x)$ is $n(2^{n-2-2v_{2}(n)}n)^{n-1}(a^{2}+4^{v_{2}(n)})^{n-1}$.
\item If $P_{n}(a,\cot(\theta))=0$, then $Q_{k}(\cot(\theta))-\cot(k\theta)V_{k}(\cot(\theta))=0$ for $k\in\Z^{+}$.
\item If $P_{n}(a,\cot(\theta))=0$ and $D\mid n$, $P_{n/D}(a/2^{v_{2}(D)},x)=0$ for
$$x=\cot(D\theta+kD\pi/n),\; 0\le k \le n/D-1.$$
\end{enumerate}
\end{prop}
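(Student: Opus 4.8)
The plan is to prove all four parts from the single substitution $x=\cot\theta$ together with the multiplication formula \eqref{e:multcot}, treating the discriminant (b) last as the only computational part. First I would establish (a). From Definition \ref{d:coeffs} and \eqref{e:multcot}, at any $\theta$ with $V_{n}(\cot\theta)\ne 0$ one has
$$P_{n}(a,\cot\theta)=V_{n}(\cot\theta)\bigl(\cot(n\theta)-a/2^{v_{2}(n)}\bigr),$$
so $x=\cot\theta$ is a root exactly when $\cot(n\theta)=c$, where $c:=a/2^{v_{2}(n)}$. A root cannot have $V_{n}(\cot\theta)=0$: from \eqref{e:shen1}, $(x+i)^{n}=Q_{n}(x)+iV_{n}(x)$ and $(x-i)^{n}=Q_{n}(x)-iV_{n}(x)$, so $Q_{n}$ and $V_{n}$ have no common zero, whereas $Q_{n}(\cot\theta)=cV_{n}(\cot\theta)=0$ would force both to vanish. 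Since $\cot$ omits only the values $\pm i$, the hypothesis $a\ne\pm 2^{v_{2}(n)}i$, i.e.\ $c\ne\pm i$, lets me pick $\theta_{1}$ with $\cot(n\theta_{1})=c$; then $\cot(n(\theta_{1}+k\pi/n))=\cot(n\theta_{1}+k\pi)=c$, and the $n$ values $\cot(\theta_{1}+k\pi/n)$, $0\le k\le n-1$, are distinct because their arguments are distinct modulo $\pi$. As $P_{n}(a,x)$ is monic of degree $n$, these are all its roots and they are simple; for real $a$ I would take $\theta_{1}$ real, so that all roots are real.

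Part (c) is then immediate: \eqref{e:multcot} reads $Q_{k}(\cot\theta)=\cot(k\theta)V_{k}(\cot\theta)$ for every $k\in\Z^{+}$, which is exactly the asserted identity; the hypothesis $P_{n}(a,\cot\theta)=0$ serves only to guarantee that $x=\cot\theta$ is a genuine value ($\ne\pm i$), so that $\theta$ is a legitimate argument. For (d), set $m=n/D$ and $\phi_{k}=D\theta+kD\pi/n=D\theta+k\pi/m$, so that $m\phi_{k}=n\theta+k\pi$ and hence $\cot(m\phi_{k})=\cot(n\theta)=c$ by (a). Writing $b=a/2^{v_{2}(D)}$ and using $v_{2}(n)=v_{2}(D)+v_{2}(m)$ gives $c=a/2^{v_{2}(n)}=b/2^{v_{2}(m)}$, so by the same factorization $P_{m}(b,\cot\phi_{k})=V_{m}(\cot\phi_{k})(\cot(m\phi_{k})-b/2^{v_{2}(m)})=0$; for $0\le k\le m-1$ these are the $m$ distinct roots of $P_{m}(b,x)$.

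The substantive part is (b). Since $P_{n}(a,x)$ is monic with the simple roots $x_{j}=\cot\theta_{j}$, $\theta_{j}=\theta+j\pi/n$, from (a), its discriminant equals $\prod_{j<k}(x_{j}-x_{k})^{2}$. I would use
$$\cot\theta_{j}-\cot\theta_{k}=\frac{\sin(\theta_{k}-\theta_{j})}{\sin\theta_{j}\sin\theta_{k}}=\frac{\sin((k-j)\pi/n)}{\sin\theta_{j}\sin\theta_{k}},$$
which splits the discriminant into a numerator $\prod_{d=1}^{n-1}\sin^{2(n-d)}(d\pi/n)$ and a denominator $\bigl(\prod_{j=0}^{n-1}\sin\theta_{j}\bigr)^{2(n-1)}$. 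For the numerator I would combine $\prod_{d=1}^{n-1}\sin(d\pi/n)=n/2^{n-1}$ with the symmetry $d\leftrightarrow n-d$ to obtain $(n/2^{n-1})^{n}$; for the denominator I would use $\prod_{j=0}^{n-1}\sin(\theta+j\pi/n)=\sin(n\theta)/2^{n-1}$ followed by $\sin^{2}(n\theta)=1/(1+\cot^{2}(n\theta))=1/(1+c^{2})$ from (a). Assembling these and substituting $1+c^{2}=(a^{2}+4^{v_{2}(n)})/2^{2v_{2}(n)}$ should collapse to the stated value. The \emph{main obstacle} is purely the $2$-adic exponent bookkeeping, where the exponent of $2$ reduces through $2(n-1)^{2}-n(n-1)=(n-1)(n-2)$ and then to $(n-1)(n-2-2v_{2}(n))$. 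Finally, since both sides are polynomials in $a$ agreeing off $\{\pm 2^{v_{2}(n)}i\}$, the identity holds for all $a$, giving discriminant $0$ at those two excluded values (where $P_{n}(\pm 2^{v_{2}(n)}i,x)=(x\mp i)^{n}$ has an $n$-fold root).
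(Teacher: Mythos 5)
Your proposal is correct, and parts (a), (c), (d) run along essentially the same lines as the paper's (which is quite terse there: it notes surjectivity and $\pi/n$-periodicity of $\cot(n\theta)$ for (a) and says (c), (d) ``follow easily from Eq.~\eqref{e:multcot}''). Where you genuinely diverge is part (b). The paper argues indirectly: it views $\textup{disc}(P_{n}(a,x))=f(a)$ as a polynomial in $a$, observes that $P_{n}(a,x)=(x\mp i)^{n}$ at $a=\pm 2^{v_{2}(n)}i$ while (a) forbids any other root of $f$, concludes $f(a)=c\,(a^{2}+4^{v_{2}(n)})^{n-1}$, and then pins down $c$ by specializing to $a=0$, where the constant becomes $\textup{disc}(Q_{n}(x))/4^{(n-1)v_{2}(n)}$, evaluated with the help of $Q_{n}'(x)=nQ_{n-1}(x)$. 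You instead compute $\prod_{j<k}(x_{j}-x_{k})^{2}$ head-on via $\cot\theta_{j}-\cot\theta_{k}=\sin((k-j)\pi/n)/(\sin\theta_{j}\sin\theta_{k})$ and the two product identities $\prod_{d=1}^{n-1}\sin(d\pi/n)=n/2^{n-1}$ and $\prod_{j=0}^{n-1}\sin(\theta+j\pi/n)=\sin(n\theta)/2^{n-1}$; I checked your exponent bookkeeping ($2(n-1)^{2}-n(n-1)=(n-1)(n-2)$, then subtract $2(n-1)v_{2}(n)$ from the $4^{v_{2}(n)}$ in $1+c^{2}$) and it lands exactly on $n(2^{n-2-2v_{2}(n)}n)^{n-1}(a^{2}+4^{v_{2}(n)})^{n-1}$. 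Your route buys a self-contained computation that never needs the degree of $f(a)$ in $a$ (a point the paper leaves implicit) nor a separate evaluation of $\textup{disc}(Q_{n})$, at the cost of invoking the two classical sine-product identities; the paper's route is shorter on trigonometry but defers real work to the $a=0$ specialization. Your closing remark extending the identity to the excluded values $a=\pm 2^{v_{2}(n)}i$ by polynomial continuation is a nice touch the paper does not bother with.
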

\begin{proof}
For (a), let $z=a/2^{v_{2}(n)}$.  For $z\in\C-\{\pm i\}$ there is $\theta$ for which  $\cot(n\theta)=z$;  we have $(0,\pi/n)\mapsto \R$. Then, $\cot(n\theta)$ has period $\pi/n$.

For (b), disc$(P_{n}(a,x))=f(a)\in\Z[a]$.  By construction, $P_{n}(a,x) = (x\mp i)^{n}$ when $a=\pm 2^{v_{2}(n)}i$.  By (a), $f(a)$ has no linear factors in $\C[a]$ other than $a\pm 2^{v_{2}(n)}i$, so is a constant multiple of  $(a^{2}+4^{v_{2}(n)})^{n-1}$.  Taking $a=0$, we see the constant is the discriminant of $Q_{n}(x)$ divided by $4^{(n-1)v_{2}(n)}$.  The identity $Q_{n}'(x)=nQ_{n-1}(x)$ aids in evaluating the discriminant of $Q_{n}(x)$.

Finally, (c) and (d) follow easily from Eq. \eqref{e:multcot}.
\end{proof}
We recover the following well-known identities  by comparing the coefficient of $x^{n-1}$  and the constant term of $P_{n}(a,x)$ with the formula $\cot(n\theta)=a/2^{v_{2}(n)}$ in Proposition \ref{p:polyidents}(a).  They are valid if $n\theta$ is not an integer multiple of $\pi$.
\begin{subequations}
\begin{equation}\label{e:cotsum}
\text{If }n\in\Z^{+},\text{ then }\sum_{k=0}^{n-1}\cot(\theta+k\pi/n)=n\cot(n\theta).
\end{equation}
\begin{equation}\label{e:cotprod}
\text{If } n\in\Z^{+}\text{ is odd, then }\prod_{k=0}^{n-1}\cot(\theta+k\pi/n)=(-1)^{(n-1)/2}\cot(n\theta).
\end{equation}
\end{subequations}
Let $n>1$, $x=\cot(\theta)$ and $\xi = \cot(\pi/n)$.  Then the map
\begin{equation}\label{e:twomaps}
T:\;\cot(\theta)\mapsto\cot(\theta+\pi/n)\text{ becomes }\lambda :\; x\mapsto  \frac{\xi x-1}{x+\xi}.
\end{equation}
Since $T$ has compositional order $n$, so does $\lambda$.  Using Eq.\ \eqref{e:multcot} and Proposition \ref{p:polyidents}(a), we have
\begin{equation}\label{e:sumlft}
\sum_{k=0}^{n-1}\lambda^{(k)}(x)=nQ_{n}(x)/V_{n}(x),
\end{equation}
where  $\lambda^{(k)}$ is the $k^{th}$ compositional power of $\lambda$. Also,
\begin{cor}\label{c:cycperm}
Let $n\in\Z$, $n>1$, and $a\in\C$.
If $\xi=\cot(\pi/n)$, the zeroes of $P_{n}(a,x)$ are permuted cyclically by the linear fractional map $\lambda$ in \textup{Eq.} \eqref{e:twomaps}.
\end{cor}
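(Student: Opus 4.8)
The plan is to obtain the corollary almost immediately from Proposition~\ref{p:polyidents}(a) by recognizing the map $\lambda$ of \eqref{e:twomaps} as the cotangent shift. First I would record that, with $x=\cot\theta$ and $\xi=\cot(\pi/n)$, the addition law for the cotangent reads $\cot(\theta+\pi/n)=\frac{\cot\theta\,\cot(\pi/n)-1}{\cot\theta+\cot(\pi/n)}=\frac{\xi x-1}{x+\xi}=\lambda(x)$; this is exactly the passage from $T$ to $\lambda$ in \eqref{e:twomaps}, so $\lambda$ realizes on the $\cot$-line the translation $\theta\mapsto\theta+\pi/n$. Since $T$, and hence $\lambda$, has compositional order $n$, the group generated by $\lambda$ is cyclic of order $n$.

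Now assume first $a\ne\pm 2^{v_{2}(n)}i$. By Proposition~\ref{p:polyidents}(a), $P_{n}(a,x)$ has $n$ distinct zeroes $x_{k}=\cot(\theta_{1}+k\pi/n)$, $0\le k\le n-1$. Applying the displayed identity gives $\lambda(x_{k})=\cot(\theta_{1}+(k+1)\pi/n)=x_{k+1}$, where the index is read modulo $n$ because $\cot$ has period $\pi$, so $x_{n}=\cot(\theta_{1}+\pi)=\cot\theta_{1}=x_{0}$. As the $x_{k}$ are pairwise distinct, $\lambda$ acts on the zero set as the single $n$-cycle $(x_{0}\,x_{1}\,\cdots\,x_{n-1})$, which is the asserted cyclic permutation.

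Two minor points remain, and these are the only places where I would be careful. First, $\lambda$ must map the finite zero set to itself rather than send some zero to $\infty$: its unique pole is $x=-\xi=\cot((n-1)\pi/n)$, and this is never a zero of $P_{n}(a,x)$, since $\cot\theta=-\xi$ forces $\theta\in(n-1)\pi/n+\pi\Z$, hence $n\theta\in\pi\Z$ and $\cot(n\theta)=\infty\ne a/2^{v_{2}(n)}$ for finite $a$; equivalently, the size-$n$ orbit computed above already exhausts the zeroes and stays finite. Second, in the degenerate case $a=\pm 2^{v_{2}(n)}i$ one has $P_{n}(a,x)=(x\mp i)^{n}$, as in the proof of Proposition~\ref{p:polyidents}(b); its only zero is $\pm i$, which is a fixed point of $\lambda$ (solving $\lambda(x)=x$ gives $x^{2}=-1$), so the permutation is trivially cyclic here too. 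I expect no real obstacle: once \eqref{e:twomaps} is seen as the cotangent shift and Proposition~\ref{p:polyidents}(a) supplies the zeroes in the form $\cot(\theta_{1}+k\pi/n)$, the $n$-cycle structure is forced, and the hardest part is merely the bookkeeping of the period-$\pi$ wraparound of the index together with the two exceptional points above.
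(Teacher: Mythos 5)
Your proof is correct and follows essentially the same route as the paper: the main case $a\ne\pm 2^{v_{2}(n)}i$ is read off from Proposition~\ref{p:polyidents}(a) via the cotangent addition law underlying Eq.~\eqref{e:twomaps}, and the degenerate case $a=\pm 2^{v_{2}(n)}i$ is handled by noting that $\pm i$ is a fixed point of $\lambda$. The extra care you take with the pole of $\lambda$ and the period-$\pi$ wraparound is sound bookkeeping that the paper leaves implicit.
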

\begin{proof}
If $a\ne\pm 2^{v_{2}(n)}i$, this just restates Proposition \ref{p:polyidents}(a). If $a=\pm 2^{v_{2}(n)}i$, the $n$-fold zero $\pm i$ of $P_{n}(a,x)=(x\mp i)^{n}$ is fixed by $\lambda$.
\end{proof}
Now, let $n\in\Z$, $n>1$ and $K=\Q(\cot(\pi/n))$.   We have the following result:
\begin{thm}\label{t:shenmain}
Let $n\in\Z$, $n>1$, $K=\Q(\cot(\pi/n))$, and $\alpha\in\mathcal{O}_{K}$.
\begin{enumerate}
\renewcommand{\labelenumi}{(\alph{enumi})}
\item The zeroes of $P_{n}(\alpha,x)$ are algebraic integers (units, if $n$ is even).
\item If $L/K$ is the splitting field of $P_{n}(\alpha,x)$, then $L/\Q$ is totally real.
\item There is a divisor $d$ of $n$ such that, if $f(x)$ is any irreducible factor of $P_{n}(\alpha,x)$ in $K[x]$, $f(x)$ has degree $d$.
\item If $f(x)$ and $d\mid n$ are as in \textup{(c)}, then $G(L/K)=\langle\sigma\rangle$, where  the restriction of $\sigma$ to the zeroes of $f(x)$ is given by $\lambda^{(n/d)}$, $\lambda$ as in \textup{Eq}.\ \eqref{e:twomaps}.
\end{enumerate}
\end{thm}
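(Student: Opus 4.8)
The plan is to dispatch the four assertions in turn, in every case exploiting the fact (Corollary \ref{c:cycperm}) that $\lambda$ is a $K$-rational linear fractional map cyclically permuting the zeroes. For (a), I would note that since $\alpha\in\mathcal{O}_K$ every coefficient of $P_n(\alpha,x)$ lies in $\Z[\alpha]\subseteq\mathcal{O}_K$ and the polynomial is monic, so all zeroes are algebraic integers. When $n$ is even, $i^n=(-1)^{n/2}$ is real, so $V_n(0)=\Im(i^n)=0$ and the constant term is $P_n(\alpha,0)=Q_n(0)=(-1)^{n/2}=\pm1$; hence the product of the zeroes is a unit, and as each zero is an algebraic integer, each is a unit in $\mathcal{O}_L$. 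For (b), I would first record that $K=\Q(\cos(2\pi/N))$ with $N=\operatorname{lcm}(n,4)$ is the maximal real subfield of $\Q(\zeta_N)$, hence totally real. Fixing one zero $r_0$, Corollary \ref{c:cycperm} shows $\lambda$ carries $r_0$ to all the remaining zeroes, so $L=K(r_0)$; since a compositum of totally real fields is totally real, it suffices to prove $\Q(r_0)$ totally real. The minimal polynomial of $r_0$ over $\Q$ divides $\prod_{\tau}P_n(\tau\alpha,x)$, the product over embeddings $\tau\colon K\to\C$, and each $\tau\alpha$ is real because $K$ is totally real, so by Proposition \ref{p:polyidents}(a) every factor $P_n(\tau\alpha,x)$ has only real zeroes; thus every $\Q$-conjugate of $r_0$ is real.

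The group-theoretic heart is (c) and (d). Since $\alpha$ is real, $\alpha\ne\pm2^{v_2(n)}i$, so Proposition \ref{p:polyidents}(b) gives nonzero discriminant: the $n$ zeroes are distinct, $P_n(\alpha,x)$ is separable, $L/K$ is Galois, and $G=G(L/K)$ acts faithfully on the zero-set $R$. The crucial step is that for $g\in G$ and $r\in R$ one has $g(\lambda(r))=\lambda(g(r))$, because the coefficient $\xi=\cot(\pi/n)$ of $\lambda$ in Eq.\ \eqref{e:twomaps} lies in $K$ and is fixed by $g$; hence $G$ centralizes $\langle\lambda\rangle$ in $\operatorname{Sym}(R)$. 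Because $\lambda$ is an $n$-cycle on the $n$-element set $R$, the group $\langle\lambda\rangle\cong C_n$ acts regularly, and the centralizer of a regular abelian subgroup of $\operatorname{Sym}(R)$ is that subgroup itself; therefore $G\subseteq\langle\lambda\rangle$ and $G$ is cyclic.

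From this structure (c) and (d) follow formally. Writing $d=|G|=[L:K]$, the group $G$ is the unique subgroup of order $d$ of $\langle\lambda\rangle$, namely $\langle\lambda^{(n/d)}\rangle$; set $\sigma=\lambda^{(n/d)}$, so $G=\langle\sigma\rangle$ and $\sigma$ acts on every zero, in particular on the zeroes of any irreducible factor $f$, as $\lambda^{(n/d)}$. Viewed as translations on $R\cong\Z/n$, the orbits of $G$ all have common size $d$, and since $L/K$ is Galois these orbits are precisely the zero-sets of the irreducible factors of $P_n(\alpha,x)$ over $K$; hence every such factor has degree $d$, which is (c). Finally $\lambda^{(n/d)}$ decomposes the $n$-cycle $\lambda$ into $n/d$ cycles of length $d$, each the zero-set of a single factor $f$, so $\sigma$ restricts to a $d$-cycle on the zeroes of $f$, giving (d).

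The one step requiring genuine care, rather than routine verification, is the centralizer identity: everything depends on $\lambda$ being defined over $K$ so that the Galois action commutes with it, together with the self-centralizing property of a regularly acting cyclic group in the symmetric group. Once the embedding $G\hookrightarrow\langle\lambda\rangle$ is established, the cyclicity of $G$, the common degree $d$ of the factors, and the explicit generator $\sigma=\lambda^{(n/d)}$ all drop out, and the remaining ingredients (monic integrality, the real-rootedness in Proposition \ref{p:polyidents}(a), the nonvanishing discriminant, and the total reality of $K$) are straightforward.
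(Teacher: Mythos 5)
Your proof is correct; parts (a) and (b) are essentially the paper's own argument, but for (c) and (d) you take a genuinely different route. The paper proves (c) with no group theory at all: since $\lambda$ is given by a matrix in $\mathbf{PGL}_{2}(K)$, every zero is a $K$-rational function of every other, so all zeroes generate the same extension $K(r)$ of $K$ and hence all irreducible factors have the same degree. For (d) it transports a fixed irreducible factor $f$ by the powers of $\lambda$, forming the ``made monic'' conjugate polynomials $f_{k}(x)=(x+\cot(k\pi/n))^{d}f(\lambda^{(k)}(x))$, counts that $f_{0},\ldots,f_{n/d-1}$ are distinct with $f_{n/d}=f_{0}$, and declares the identification of $G(L/K)$ ``self-evident.'' You instead note that $G$ acts faithfully on the $n$ distinct zeroes and commutes with the $n$-cycle $\lambda$ (because $\xi\in K$ is Galois-fixed), and invoke the fact that a regular abelian subgroup of the symmetric group is its own centralizer to embed $G$ into $\langle\lambda\rangle$; then $G=\langle\lambda^{(n/d)}\rangle$ and both (c) and (d) drop out of the orbit structure. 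Your route buys a cleaner and more complete finish: it makes the paper's ``self-evident'' step explicit and pins down $G$ exactly, rather than merely inferring its order from the count of translates of $f$. The price is one genuinely group-theoretic input (the centralizer identity) plus the need to check faithfulness and separability, where the paper's (c) needs only the $K$-rationality of $\lambda$; since you do supply these (separability via Proposition \ref{p:polyidents}(b) and the reality of $\alpha$, faithfulness because $L$ is generated by the zeroes), there is no gap.
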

\begin{proof}
For (a), $P_{n}(\alpha,x)$ is a monic polynomial in $\mathcal{O}_{K}[x]$ by Definition \ref{d:coeffs}.  If $n$ is even, the constant term is $(-1)^{n/2}$.

For (b), the zeroes of $P_{n}(\alpha,x)$ are all real by part (a) of Proposition \ref{p:polyidents}.  Since $K$ is totally real, the zeroes of $P_{n}(\alpha',x)$ are also all real, for each conjugate $\alpha'$ of $\alpha$ in $K/\Q$.

For (c), let $P_{n}(\alpha,r)=0$.  By Corollary \ref{c:cycperm}, the rest of the zeroes are given by $\lambda^{(k)}(r)$, $1\le k\le n-1$, $\lambda$ as in Eq.\ \eqref{e:twomaps}.  Now  $\lambda$ is defined by a matrix in $\mathbf{PGL}_{2}(K)$, so all zeroes of $P_{n}(\alpha,x)$ determine the \emph{same} extension of $K$. Thus, its irreducible factors in $K[x]$ all have the same degree.

For (d), let $f(x)$ be a monic irreducible factor of $P_{n}(\alpha,x)$ in $K[x]$, and $d\mid n$ its degree.   Let $f_{k}(x)  =(x+\cot(k\pi/n))^{d}f(\lambda^{(k)}(x))$ ``made monic'' (divided by its lead coefficient).

If $f_{j+k}(x)= f_{j}(x)$, clearly $\lambda^{(k)}$ has compositional order $d$ at most.  So $f(x)=f_{0}(x)$, $f_{1}(x),\ldots$, $f_{n/d-1}(x)$  are all distinct, and $f_{n/d}(x)=f(x)$.  The rest is now self-evident.
\end{proof}
The integer $d$ in Theorem \ref{t:shenmain}(c) has some additional properties:
\begin{prop}\label{p:compsplit}
Let $n$, $\alpha$, and  $P_{n}(\alpha,x)$ be as in \textup{Theorem \ref{t:shenmain}}, with $d$ as in  \textup{Theorem \ref{t:shenmain}(c)}.
\begin{enumerate}
\renewcommand{\labelenumi}{(\alph{enumi})}
\item If $P_{n}(\alpha,\cot(\theta))=0$, then $\cot(d\theta)\in K$.
\item $P_{n/d}(\alpha/2^{v_{2}(d)},x)$ splits into linear factors in $K[x]$.
\item $d$ is the least positive integer for which \textup{(a)} and \textup{(b)} hold.
\end{enumerate}
\end{prop}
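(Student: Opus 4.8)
The plan is to translate everything into the language of the cotangent function and the argument-shift it undergoes under $\sigma$, and then to read off (a)--(c) from the $\pi$-periodicity of $\cot$. By Theorem~\ref{t:shenmain} I may fix a zero $\cot(\theta)$ of $P_{n}(\alpha,x)$, so that $L=K(\cot(\theta))$ with $[L:K]=d$ and $G(L/K)=\langle\sigma\rangle$ cyclic of order $d$; by Theorem~\ref{t:shenmain}(d) and \eqref{e:twomaps} the generator acts by $\sigma=\lambda^{(n/d)}$, i.e.\ $\sigma(\cot(\theta))=\cot(\theta+\pi/d)$. The one computational tool I need is the multiplication formula \eqref{e:multcot}, which says $\cot(m\phi)=Q_{m}(\cot(\phi))/V_{m}(\cot(\phi))$ as an identity of $K$-rational functions (the coefficients of $Q_{m},V_{m}$ are integers). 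Before starting I note that $\cot(n\theta)=\alpha/2^{v_{2}(n)}$ is finite, and since $d\mid n$ this forces $d\theta\notin\pi\Z$, so $\cot(d\theta)$ is a genuine (finite) element of $L$; the same remains true after replacing $\theta$ by any $\theta+k\pi/n$.

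For (a), I would write $\cot(d\theta)=Q_{d}(\cot(\theta))/V_{d}(\cot(\theta))\in L$ and apply $\sigma$. Since $\sigma$ fixes the coefficients of $Q_{d},V_{d}$ and sends $\cot(\theta)\mapsto\cot(\theta+\pi/d)$, the multiplication formula gives $\sigma(\cot(d\theta))=\cot\!\bigl(d(\theta+\pi/d)\bigr)=\cot(d\theta+\pi)=\cot(d\theta)$. Thus $\cot(d\theta)$ is fixed by the generator of $G(L/K)$, hence lies in $K=L^{\langle\sigma\rangle}$. For (b), I apply Proposition~\ref{p:polyidents}(d) with $D=d$: the zeroes of $P_{n/d}(\alpha/2^{v_{2}(d)},x)$ are exactly the numbers $\cot(d\theta+kd\pi/n)=\cot\!\bigl(d(\theta+k\pi/n)\bigr)$ for $0\le k\le n/d-1$. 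Each $\cot(\theta+k\pi/n)$ is itself a zero of $P_{n}(\alpha,x)$, so part (a) applied to it shows $\cot(d(\theta+k\pi/n))\in K$. Hence every zero of $P_{n/d}(\alpha/2^{v_{2}(d)},x)$ lies in $K$, and the polynomial splits into linear factors there.

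For the minimality in (c), suppose a positive integer $e$ satisfies condition (a), so that $\cot(e\theta)\in K$. Because this is asserted to be a \emph{finite} element of $K$, the denominator $V_{e}(\cot(\theta))$ is nonzero, and $\cot(e\theta)=Q_{e}(\cot(\theta))/V_{e}(\cot(\theta))$ is fixed by $\sigma$. Exactly as in (a), $\sigma(\cot(e\theta))=\cot(e\theta+e\pi/d)$, so $\cot(e\theta+e\pi/d)=\cot(e\theta)$; since both sides are finite, the injectivity of $\cot$ modulo $\pi$ forces $e\pi/d\in\pi\Z$, i.e.\ $d\mid e$. Therefore every $e$ satisfying (a) is a multiple of $d$, so $d$ is already the least positive integer for which (a) holds; combined with parts (a) and (b), which exhibit $d$ as satisfying both conditions, $d$ is the least positive integer for which (a) and (b) hold.

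The substantive point---and the only place where care is required---is the passage from ``$\sigma$ fixes $\cot(e\theta)$'' to ``$d\mid e$''. This rests on two facts that must be invoked cleanly: that the abstract Galois action of $\sigma$ really is the analytic shift $\theta\mapsto\theta+\pi/d$ via the rational identity \eqref{e:multcot} (so that $\sigma$ commutes with forming $Q_{e}/V_{e}$), and that $\cot$ is injective modulo its period $\pi$. The potential pitfall is the degenerate situation $V_{e}(\cot(\theta))=0$ (equivalently $e\theta\in\pi\Z$), which I sidestep by observing that hypothesis (a) already asserts $\cot(e\theta)$ to be a finite element of $K$, ruling it out; the corresponding finiteness needed in (a) and (b) comes for free from $\cot(n\theta)$ being finite together with $d\mid n$.
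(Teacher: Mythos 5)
Your proof is correct, but it runs on different machinery than the paper's. For (a) the paper takes the minimal polynomial $f(x)$ of $\cot(\theta)$ over $K$, notes its roots are $\cot(\theta+k\pi/d)$, and invokes the sum identity \eqref{e:cotsum} to recognize $d\cot(d\theta)$ as a symmetric function of the roots (a coefficient of $f$), hence in $K$; you instead show $\cot(d\theta)$ is fixed by the generator $\sigma$ of $G(L/K)$, using \eqref{e:multcot} and the $\pi$-periodicity of $\cot$, and conclude via the fixed-field characterization. For (c) the paper observes $f(x)=Q_{d}(x)-\cot(d\theta)V_{d}(x)$ (Proposition \ref{p:polyidents}(c)), so any $e$ with $\cot(e\theta)\in K$ yields a degree-$e$ polynomial over $K$ vanishing at $\cot(\theta)$, forcing $e\ge d$ by irreducibility; you instead use injectivity of $\cot$ modulo $\pi$ to get the stronger conclusion $d\mid e$ from $\sigma$-invariance. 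For (b) both routes start from Proposition \ref{p:polyidents}(d), but the paper puts the zeroes $\cot(d\theta+k\pi d/n)$ in $K$ via the addition formula applied to $\cot(d\theta)$ and $\cot(k\pi d/n)$, whereas you simply apply part (a) to each conjugate zero $\cot(\theta+k\pi/n)$ of $P_{n}(\alpha,x)$ — which is arguably cleaner, since it avoids having to check separately that $\cot(k\pi d/n)\in K$. Your version buys a divisibility statement in (c) and a more uniform treatment of (b), at the cost of having to police the degenerate case $V_{e}(\cot(\theta))=0$, which you do correctly by observing that $\cot(n\theta)=\alpha/2^{v_{2}(n)}$ is finite and $d\mid n$; the paper's symmetric-function argument sidesteps that issue entirely.
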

\begin{proof}
Let $f(x)$ be the minimum polynomial for $\cot(\theta)$ in $K[x]$.  By Theorem \ref{t:shenmain}(d), the zeroes of $f(x)$ are $\cot(\theta+k\pi/d)$, $0\le k<d$.  By the identity in Eq. \eqref{e:cotsum}, the sum of these zeroes is $d\cot(d\theta)$, proving (a).  We have $f(x)=Q_{d}(x)-\cot(d\theta)V_{d}(x)$ by Prop. \ref{p:polyidents}(c); $d$ is the least positive integer with $\cot(d\theta)\in K$ since $f(x)$ is irreducible in $K[x]$.  By the formula in Prop. \ref{p:polyidents}(d), the zeroes of $P_{n/d}(\alpha/2^{v_{2}(d)},x)$ are $\cot(d\theta+k\pi d/n)$, $0\le k <n/d$.    These are all in $K$ because $\cot(d\theta)\in K$ and $\cot(k\pi d/n)\in K$.

\end{proof}
When $4\mid n$, $\langle\lambda\rangle$ has generators which make \eqref{e:M} a formal identity.
\begin{prop}\label{p:shenht90}
Let $\lambda$ be as in \textup{Eq.~\eqref{e:twomaps}}, $m\in\Z^{+}$, and $n=4m$.  If
$$S(x) = 1 + x + x\lambda^{-1}(x) + x\lambda^{-1}(x)\lambda^{-2}(x) +\ldots +x\lambda^{-1}(x)\cdots \lambda^{-(4m-2)}(x) ,$$
then $S(x)\equiv 0$; that is, if $\sigma^{k}(r)=\lambda^{-k}(r)$, \textup{\eqref{e:M}} becomes a formal identity.
\end{prop}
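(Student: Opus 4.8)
The plan is to exploit the cotangent parametrization and to realize $r$ as $\sigma(z)/z$ with $z$ in the kernel of the trace, exactly as in the telescoping computation preceding Proposition~\ref{p:primelt}. Write $x=\cot\theta$ and let $\sigma=\lambda^{-1}$ act as $\theta\mapsto\theta-\pi/n$, so that $\sigma^{k}(x)=\cot(\theta-k\pi/n)$. Since $n=4m$ we have $2m\cdot(\pi/n)=\pi/2$, so setting
\[
W(\theta)=\prod_{j=1}^{2m}\sin\!\big(\theta+j\pi/n\big),\qquad z(\theta)=1/W(\theta),
\]
a one-line cancellation gives $z(\theta-\pi/n)/z(\theta)=W(\theta)/W(\theta-\pi/n)=\sin(\theta+\pi/2)/\sin\theta=\cot\theta=x$. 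Thus $r=\sigma(z)/z$, the $k$-th term of $S(x)$ telescopes to $\sigma^{k}(z)/z=W(\theta)/W(\theta-k\pi/n)$, and $S(x)=\big(\sum_{k=0}^{n-1}1/W(\theta-k\pi/n)\big)/z(\theta)$. All of this is an identity of rational functions in $x$ (equivalently of meromorphic functions of $\theta$), so proving $S\equiv 0$ reduces to showing that the trace sum $\sum_{k=0}^{n-1}1/W(\theta-k\pi/n)$ vanishes identically; that is, that $z$ lies in the kernel of the trace.

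The next step converts the reciprocal product into an entire function. From the classical identity $\prod_{l=0}^{N-1}\sin(\phi+l\pi/N)=2^{1-N}\sin(N\phi)$, together with the pairwise multiplication $\sin c\cos c=\tfrac12\sin 2c$, one obtains $W(\theta)\,\widetilde W(\theta)=-2^{\,1-n}\sin(n\theta)$, where $\widetilde W(\theta)=\prod_{j=1}^{2m}\cos(\theta+j\pi/n)$. Using $\sin(n\theta-k\pi)=(-1)^{k}\sin(n\theta)$, this yields
\[
\sum_{k=0}^{n-1}\frac{1}{W(\theta-k\pi/n)}=\frac{-2^{\,n-1}}{\sin(n\theta)}\sum_{k=0}^{n-1}(-1)^{k}\,\widetilde W(\theta-k\pi/n).
\]
Since $\sin(n\theta)\not\equiv 0$, it now suffices to prove that the alternating sum of shifts of the \emph{trigonometric polynomial} $\widetilde W$ vanishes identically.

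Finally I would expand $\widetilde W(\theta)=\sum_{f}c_{f}e^{if\theta}$ in Fourier series (here $i^{2}=-1$). Being a product of $2m$ cosines, $\widetilde W$ has only the frequencies $f\in\{-2m,-2m+2,\dots,2m\}$, all even and bounded by $n/2$ in absolute value. Then
\[
\sum_{k=0}^{n-1}(-1)^{k}\widetilde W(\theta-k\pi/n)=\sum_{f}c_{f}e^{if\theta}\sum_{k=0}^{n-1}\rho_{f}^{\,k},\qquad \rho_{f}:=-e^{-if\pi/n},
\]
and each inner geometric sum equals $(\rho_{f}^{\,n}-1)/(\rho_{f}-1)$ when $\rho_{f}\neq 1$. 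Here $\rho_{f}^{\,n}=(-1)^{n+f}=1$ because $n$ and $f$ are both even, whereas $\rho_{f}=1$ would force $f\equiv n\pmod{2n}$, which is impossible for $|f|\le n/2<n$. Hence every geometric sum vanishes, the alternating sum is identically $0$, and therefore $S(x)\equiv 0$ as a rational function in $x$.

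The crux — and the only place where the hypothesis $4\mid n$ is essential — is this last step. It is $4\mid n$ that makes $n/2=2m$ even, which simultaneously forces all frequencies $f$ of $\widetilde W$ to be even (so that $\rho_{f}^{\,n}=1$) and keeps the single resonant frequency $f\equiv n$ outside the range $|f|\le n/2$; for $n\equiv 2\pmod 4$ the frequencies are odd, $\rho_{f}^{\,n}=-1$, and the geometric sums fail to cancel. The remaining ingredients (the product formula for $W\widetilde W$, the one-line telescoping, and the frequency range of $\widetilde W$) are routine.
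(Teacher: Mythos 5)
Your argument is correct, and it takes a genuinely different route from the paper's. The paper treats $S(x)$ directly as a rational function: it writes down the partial fraction decomposition $S(x)=A+Bx+\sum_{k}c_{k}/(x-\cot(k\pi/4m))$, kills the linear term $B$ by pairing the terms of index $k$ and $2m-1-k$ via $\cot(\pi/2-\theta)=1/\cot(\theta)$, kills every residue $c_{k}$ by iterating the functional equation $\lambda(x)S(x)=S(\lambda(x))$ and a boundedness argument, and finally evaluates the remaining constant at the fixed point $x=i$ of $\lambda$. You instead produce an explicit Hilbert-90 certificate: the function $z=1/W(\theta)$ with $W(\theta)=\prod_{j=1}^{2m}\sin(\theta+j\pi/n)$ satisfies $x=\sigma(z)/z$, so $S$ telescopes to $\mathbf{Tr}(z)/z$ exactly as in the discussion preceding Proposition~\ref{p:primelt}, and the vanishing of the trace sum is then a finite Fourier computation using $W\widetilde{W}=-2^{1-n}\sin(n\theta)$. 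Your version is longer and needs the product formula for sines, but it buys two things the paper's proof does not: it exhibits concretely the trace-zero element whose existence the telescoping heuristic predicts, and it isolates transparently where $4\mid n$ enters (the frequencies of $\widetilde{W}$ are even and bounded by $n/2$, so every $\rho_{f}^{\,n}=1$ while no $\rho_{f}=1$; for $n\equiv 2\pmod 4$ the parity flips and the cancellation genuinely fails). The paper's proof is shorter and stays entirely inside the rational function field, needing only the functional equation and the fixed point $i$. Both arguments rest on the same cotangent parametrization \eqref{e:twomaps}, and I see no gap in yours.
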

\begin{proof}
$S(x)$ is a rational function, which clearly has a partial fraction decomposition of the form
$$S(x)=A+Bx+\sum_{k=1}^{4m-2}\frac{c_{k}}{(x-\cot(k\pi/4m))},$$
where $A,B,c_{k}\in\Q(\cot(\pi/4m))$.  Using $\lambda^{2m}(x)=-1/x$, we find
$$B = \sum_{k=0}^{2m-1}\prod_{j=1}^{k}(-\cot(j\pi/4m)).$$
Since $\cot(\pi/2-\theta)=1/\cot(\theta)$, the terms of index $k$ and $2m-1-k$ are equal and opposite.  Thus $B=0$, so $S(x)$ is bounded as $x\rightarrow\infty$.

Now $\lambda^{-(4m-1)}(x)=\lambda(x)$.  Using $\lambda^{j+2m}(x)=-1/\lambda^{j}(x)$, we find that $\lambda(x)S(x)=S(\lambda(x))$.  Letting $x\rightarrow\infty$, we see $S(\lambda(x))$ remains bounded as $\lambda(x)\rightarrow\cot(\pi/4m)$, so $c_{1}=0$.  Then, $\lambda^{(k)}(x)S(\lambda^{k-1}(x))=S(\lambda^{(k)}(x))$, giving $c_{k}=0$ for $1\le k\le 4m-2$.  Thus, $S(x)=A$, a constant.  Substituting $x=i$, which is fixed by $\lambda$, we find $S(i)=0$.
\end{proof}
Next, we evaluate the sums in Eq.\ \eqref{e:admiss} with $\sigma=\lambda$.  If $d$ (and hence $n$) is even, the factors in each term occur in negative-reciprocal pairs, so the sum is $(-1)^{d/2}(n/d)$.  If $d$ is odd, we use Eq.\ \eqref{e:cotprod} to evaluate each term, then Eq.\ \eqref{e:cotsum} to add them; the result is $(-1)^{(d-1)/2}na/(d\cdot 2^{v_{2}(n)})$.

Let $n=4m$, $a\in\Z$. Since $\cot(\pi/n)\in\Q(r,\lambda(r))$ if $P_{n}(a,r)=0$, the splitting field of $P_{n}(a,x)$ over $\Q$ has degree divisible by $\varphi(2m)$.  If $\Q(r)/\Q$ is normal of degree $n$, then  $\varphi(2m)\mid 4m$. If $d$ is the degree over $K$, then $d$ divides $4m/\varphi(2m)$.  If $n=2^{t}$, $t\ge 4$, then $d\mid 4$, and it can be shown using Prop. \ref{p:compsplit}(b) that $\Q(r)/\Q$ is a normal extension of degree $n=2^{t}$ only if  $a=\pm n$ and $\Q(r)=\Q(2\cos(\pi/4n))$, or if $n=16$ and $a=\pm 16\cdot 239$.
%
%New section
%
\section{The condition \eqref{e:M} with $n = 4$}\label{s:m4}
Assuming that  \eqref{e:M} and the conditions in Eq.\ \eqref{e:admiss} hold, and that the map $\sigma$ fixes the ground field $\kk$ elementwise, we only need elementary algebra, treating $\sigma$ as a field isomorphism as needed, to produce a complete description of $\sigma$ as an algebraic map, in terms of two parameters $m$ and $A$ in $\kk$.
%
%subsection
\subsection{The formal construction}\label{ss:formconst}
When $n = 4$, \eqref{e:M} becomes
\begin{equation}
\tag{M4} \label{e:M4}
1 + r + r\sigma(r) + r\sigma(r)\sigma^{2}(r) = 0.
\end{equation}
The condition of Eq.~\eqref{e:admiss} with $d = 2$ is
$$r\sigma^{2}(r) + \sigma(r)\sigma^{3}(r) = m \in \kk.$$
By Proposition~\ref{p:norm1}, $r\sigma(r)\sigma^{2}(r)\sigma^{3}(r) = 1$, so taking $r\sigma^{2}(r)= u$, we have
\begin{subequations}
\begin{equation}\label{e:sigmau}
\sigma(u) = u^{-1},\text{ where}
\end{equation}
\begin{equation}\label{e:polyu}
u^{2} - mu + 1 = 0, \; m \in \kk.
\end{equation}
\end{subequations}
Substituting $u$ for $r\sigma^{2}(r)$ in \eqref{e:M4}, solving for $\sigma(r)$, repeatedly applying $\sigma$, and keeping in mind Eq.\ \eqref{e:sigmau}, we obtain the expressions
\begin{equation}
\tag{C4} \label{e:C4}
\sigma(r) = \frac{-r - 1}{r + u}, \; \sigma^{2}(r) = \frac{u}{r}, \; \sigma^{3}(r) = \frac{-u^{-1}r - 1}{r + 1}, \; \sigma^{4}(r) = r.
\end{equation}
The product of the four expressions in \eqref{e:C4} is 1 as required by Proposition~\ref{p:norm1}.  The expression for $\sigma^{2}(r)$ is of the same form as that for a ``relative'' unit in a cyclic quartic extension, as given in \cite{wash:quartics}, \S 2.  These expressions are also strikingly similar to those in \cite{leut:euclenstra}, \S 1.1.

Because of Eq. \eqref{e:sigmau}, for the map $\sigma$ to fix $\kk$ elementwise it is necessary that either
\begin{equation}\label{e:notasquare}
m=\pm 2\text{ or } m^{2}-4\notin \kk^{2}.
\end{equation}
Substituting \eqref{e:C4} into the condition in Eq.~\eqref{e:admiss} with $d = 1$,
\begin{equation}
\tag{Tr4} \label{e:Tr4}
r + \frac{-r - 1}{r + u} + \frac{u}{r} + \frac{-u^{-1}r - 1}{r + 1} = A, \; A \in \kk.
\end{equation}
Clearing fractions and collecting terms, we find that $r$ is a zero of
\begin{equation}
\tag{Q4}\label{e:Q4}
p(x) = x^{4} + (u - u^{-1}-A)x^{3} + ((2-A)u-A - 4)x^{2} + (u^{2} - 1-Au)x + u^{2}
\end{equation}
where $m, A \in \kk$, and Eqs.~\eqref{e:polyu} and \eqref{e:notasquare} hold.  By Eq.~\eqref{e:notasquare}, $p(x)\in\kk[x]$ only if $m=\pm 2$.  We treat those cases first.
%
%subsection
\subsection{The cases $m = \pm 2$}\label{ss:meqpm2}
If $m = 2$ then $u = 1$, the formal expressions for $\sigma(r)$ and $\sigma^{3}(r)$ become $-1$, and we obtain
$$p(x) = (x + 1)^{2}(x^{2} - (A + 2)x + 1).$$
In this case the transformation $\sigma$ is \emph{not} a field automorphism of order 4, but  if $(A + 2)^{2} - 4 \notin \kk^{2}$, the quadratic factor is irreducible in $\kk[x]$, and the expression $1/r$ for $\sigma^{2}(r)$ does define an automorphism of order 2.  The substitution $A \leftarrow -4 - A$
changes the signs of the zeroes of the quadratic factor.

If $m = -2$ then $u = -1$, and the expressions in \eqref{e:C4} become
\begin{equation}\tag{C4$'$}\label{e:oldc4}
\sigma(r)= \frac{-r - 1}{r - 1}, \,\sigma^{2}(r)=-\frac{1}{r}, \, \sigma^{3}(r)=\frac{r - 1}{r + 1}, \,\text{ and}\; \sigma^{4}(r)=r.
\end{equation}
Substituting $u = -1$ into Equation~\eqref{e:Q4} gives
\begin{equation}
\tag{Q4$'$}\label{e:simpq}
p(x) = x^{4} - Ax^{3} - 6x^{2} + Ax + 1.
\end{equation}
Apart from parameter name, this is  the $P(x)$ in Proposition \ref{p:known}(b) for the ``simplest'' quartic fields.  If $\kk = \Q$ and $A \in \Z$, $p(x)$ is irreducible in $\Z[x]$ unless $A \in \{-3, 0, 3\}$.  In general, $p(x)$ is irreducible in $\kk[x]$ when $A^{2} + 16 \notin \kk^{2}$.  The Galois group acts on the zeroes as in  \eqref{e:oldc4}.  The polynomials $f_{1}(x) = p(x)$ and $f_{3}(x)$ in Proposition~\ref{p:diffgen}, are related by the change of parameter $A \leftarrow -A$.
%
%subsection
\subsection{Fundamental identities when $m^{2}-4\notin\kk^{2}$}
\label{ss:basicalgid}
When $m^{2}-4\notin\kk^{2}$, however, the $p(x)$ in \eqref{e:Q4} is \emph{not} in $\kk[x]$. Conjugating the coefficients in $\kk(u)/\kk$ gives a polynomial $\bar{p}(x)\ne p(x)$.  Assuming $\sigma$ is a field automorphism, and $p(r)=0$, $\sigma(r)$ is a zero of $\bar{p}(x)$ rather than of $p(x)$.  But \emph{that} would imply  an \emph{algebraic} relation between  $\bar{p}((-x-1)/(x+u))$ and $p(x)$.  And there is indeed such a relation:
\begin{thm}
\label{t:opensesame} Let $m, A \in \kk$, $m^{2}-4\notin \kk^{2}$, and $u$, $p(x)$ and $\bar{p}(x)$ be as above.  Then
$$(x + u)^{4}\;\bar{p}\left(\frac{-x - 1}{x + u}\right) = (m - 2)p(x).$$
\end{thm}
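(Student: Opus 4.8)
The plan is to recognize $p(x)$ as the numerator of the left-hand side of \eqref{e:Tr4} and to exploit the fact that the algebraic map $\sigma\colon x\mapsto(-x-1)/(x+u)$, together with conjugation in $\kk(u)/\kk$, reproduces that trace expression. Write
\[
\Phi(x)=x+\frac{-x-1}{x+u}+\frac{u}{x}+\frac{-u^{-1}x-1}{x+1}
\]
for the left side of \eqref{e:Tr4} with $r$ replaced by the indeterminate $x$. Clearing denominators over the common denominator $x(x+1)(x+u)$ is exactly the construction of $p(x)$, so $\Phi(x)-A=p(x)/\bigl(x(x+1)(x+u)\bigr)$; comparing behavior as $x\to\infty$ confirms that the numerator is the monic quartic $p(x)$ with no cancellation lowering its degree. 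Applying the nontrivial automorphism of $\kk(u)/\kk$ (which sends $u\mapsto u^{-1}$) to the coefficients yields the conjugate expression $\bar\Phi(x)$, and likewise $\bar\Phi(x)-A=\bar p(x)/\bigl(x(x+1)(x+u^{-1})\bigr)$.

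The crux is the functional identity $\bar\Phi(\sigma(x))=\Phi(x)$. I would verify this term by term: setting $y=\sigma(x)=(-x-1)/(x+u)$ and writing $\bar\sigma(x)=(-x-1)/(x+u^{-1})$, direct substitution gives $\bar\sigma(y)=u/x$, then $u^{-1}/y=(-u^{-1}x-1)/(x+1)$, and the image of the last term $(-uy-1)/(y+1)$ of $\bar\Phi$ equals $x$; together with the first term $y=\sigma(x)$ itself, the four terms of $\bar\Phi$ evaluated at $\sigma(x)$ are precisely the four terms of $\Phi(x)$ in permuted order, so their sums agree. Conceptually this is just the statement that $\bar\sigma\circ\sigma$ is the map $x\mapsto u/x$ (i.e.\ $\sigma^2$ as in \eqref{e:C4}) and that the full trace is $\sigma$-invariant; the one place to be careful is attaching the $u\leftrightarrow u^{-1}$ conjugation to the ``second application'' of $\sigma$.

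With the identity in hand the computation closes quickly. From the conjugate form above, $\bar p(y)=y(y+1)(y+u^{-1})\,(\bar\Phi(y)-A)$; substituting $y=\sigma(x)$ and using $\bar\Phi(\sigma(x))-A=\Phi(x)-A=p(x)/\bigl(x(x+1)(x+u)\bigr)$ gives
\[
(x+u)^{4}\,\bar p(\sigma(x))=p(x)\cdot(x+u)^{4}\,\frac{\sigma(x)\,(\sigma(x)+1)\,(\sigma(x)+u^{-1})}{x(x+1)(x+u)}.
\]
It then remains to simplify the rational factor. Using $\sigma(x)=-(x+1)/(x+u)$, $\sigma(x)+1=(u-1)/(x+u)$, and $\sigma(x)+u^{-1}=x(1-u)/\bigl(u(x+u)\bigr)$, the numerator product equals $x(x+1)(u-1)^{2}/\bigl(u(x+u)^{3}\bigr)$, so the factors $x(x+1)$ cancel and the four powers of $(x+u)$ collapse, leaving exactly $(u-1)^{2}/u$. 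Finally $(u-1)^{2}/u=u+u^{-1}-2=m-2$ by \eqref{e:polyu}, which is the claimed identity.

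The main obstacle is conceptual rather than computational: one must spot that $p$ and $\bar p$ are the numerators of $\Phi-A$ and $\bar\Phi-A$, and that the decisive fact is $\bar\Phi\circ\sigma=\Phi$. Once this is seen, every remaining step is a one-line substitution, and the only genuine risk is sign and conjugation bookkeeping in the term-by-term check. A brute-force expansion of $(x+u)^{4}\,\bar p\bigl((-x-1)/(x+u)\bigr)$ with direct comparison against $(m-2)p(x)$ is a viable alternative, but it is far less transparent and obscures why the constant turns out to be $m-2$.
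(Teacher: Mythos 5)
Your argument is correct; I verified each step, including the term-by-term check that $\bar{\Phi}(\sigma(x))=\Phi(x)$ (the four terms of $\bar{\Phi}$ at $\sigma(x)$ are indeed $\sigma(x)$, $u/x$, $(-u^{-1}x-1)/(x+1)$, and $x$, a cyclic shift of the terms of $\Phi(x)$), the identification of $p(x)$ as exactly the monic numerator of $\Phi(x)-A$ over $x(x+1)(x+u)$, and the final simplification of the rational factor to $(u-1)^2/u=m-2$. However, your route is genuinely different from the paper's: the paper proves Theorem \ref{t:opensesame} by the one-line appeal ``straightforward polynomial algebra,'' i.e.\ by the brute-force expansion you mention only as a fallback. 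What your approach buys is an explanation of \emph{why} the identity holds and where the constant comes from: the theorem is a consequence of the $\sigma$-invariance of the trace expression \eqref{e:Tr4} (more precisely, of the functional equation $\bar{\Phi}\circ\sigma=\Phi$, which encodes that $\bar{\sigma}\circ\sigma$ is $x\mapsto u/x$ as in \eqref{e:C4}), with the factor $m-2$ emerging as $(u-1)^2/u$ from the Jacobian-like ratio of the two denominators. This also makes transparent the later refinement in Lemma \ref{l:identqs}(a), where the analogous constant is $q_3(-1)$. What the paper's approach buys is brevity and independence from the derivation of \eqref{e:Q4}: it needs no claim that clearing fractions in \eqref{e:Tr4} yields $p(x)$ on the nose (rather than a scalar multiple), a point your proof must and does address by checking the leading coefficient. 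Your one flagged risk --- attaching the $u\leftrightarrow u^{-1}$ conjugation to the second application of $\sigma$ --- is handled correctly.
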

\begin{proof}Straightforward polynomial algebra.
\end{proof}
Although $p(x)\notin\kk[x]$, clearly
\begin{equation}\label{e:tmax}
T(m,A,x)=p(x)\bar{p}(x)
\end{equation}
is a monic octic in $\mathbb{Z}[m,A][x]$ (hence in $\kk[x]$ since $m,A\in\kk$), with  constant term $1$. Theorem \ref{t:opensesame} gives two important properties of $T(m,A,x)$:
\begin{cor}
\label{c:nrml}
Let $m$, $A$, and $p(x)$ be as above.
\begin{enumerate} \renewcommand{\labelenumi}{(\alph{enumi})}
\item $p(x)$ and $\bar{p}(x)$ have the same splitting field $L$ over $\kk(u)$, which is the splitting field of $T(m, A, x)$ over $\kk$.
\item $\textup{disc}(T(m,A,x))\in \kk^{2}$.
\end{enumerate}
\end{cor}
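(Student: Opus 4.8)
The plan is to read off both statements from the single polynomial identity in Theorem~\ref{t:opensesame}, together with the action of the nontrivial automorphism $\tau$ of $\kk(u)/\kk$ (which interchanges $u$ and $u^{-1}$, hence $p$ and $\bar p$). Note first that $m^{2}-4\notin\kk^{2}$ forces $m\ne\pm2$, so $u\notin\kk$, $\tau$ genuinely has order $2$, and both the scalar $m-2$ and the M\"obius determinant $1-u$ are nonzero.

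For part (a) I would work first over $\kk(u)$. Theorem~\ref{t:opensesame} says that the $\kk(u)$-rational map $\phi(x)=(-x-1)/(x+u)$ carries each zero of $p$ to a zero of $\bar p$; being an invertible M\"obius map between two degree-$4$ zero sets, it does so bijectively, so $\bar p$ splits over the splitting field of $p$. Applying $\tau$ to the identity produces the companion map (with $u$ replaced by $u^{-1}$) sending zeros of $\bar p$ back to zeros of $p$, giving the reverse inclusion; hence $p$ and $\bar p$ share a common splitting field $L$ over $\kk(u)$, which is also the splitting field of $T=p\bar p$ over $\kk(u)$. To descend to $\kk$, I would observe that the $\kk$-splitting field of $T$ already contains $u$: by \eqref{e:C4} one has $\sigma^{2}(r)=u/r$, and $\sigma^{2}$ (being $\phi$ followed by its $\tau$-conjugate) sends zeros of $p$ again to zeros of $p$, so $u=r\cdot\sigma^{2}(r)$ is a product of two zeros of $T$. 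Thus $\kk(\text{zeros of }T)\supseteq\kk(u)$ and equals $L$.

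For part (b) the engine is the standard factorization $\operatorname{disc}(T)=\operatorname{disc}(p)\,\operatorname{disc}(\bar p)\,\operatorname{Res}(p,\bar p)^{2}$. The resultant is $\tau$-invariant, since $\tau$ swaps $p$ and $\bar p$ and $\operatorname{Res}(\bar p,p)=(-1)^{16}\operatorname{Res}(p,\bar p)$; hence $\operatorname{Res}(p,\bar p)\in\kk$ and its square already lies in $\kk^{2}$. It remains to show that $\operatorname{disc}(p)\operatorname{disc}(\bar p)=\mathcal N_{\kk(u)/\kk}(\operatorname{disc}(p))$ is a square in $\kk$. Here I would invoke the transformation law $\operatorname{disc}\big((cx+d)^{n}q(\tfrac{ax+b}{cx+d})\big)=(ad-bc)^{n(n-1)}\operatorname{disc}(q)$ for monic $q$ of degree $n$, applied to Theorem~\ref{t:opensesame} with $q=\bar p$, $n=4$, and $(a,b,c,d)=(-1,-1,1,u)$, so $ad-bc=1-u$. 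Since the left side equals $\operatorname{disc}((m-2)p)=(m-2)^{6}\operatorname{disc}(p)$, this yields $\operatorname{disc}(p)=c^{6}\operatorname{disc}(\bar p)$ with $c=(1-u)^{2}/(m-2)$.

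The finishing step is to check that $c$ has norm $1$: $(1-u)(1-u^{-1})=2-(u+u^{-1})=2-m$, whence $\mathcal N_{\kk(u)/\kk}(c)=(m-2)^{2}/(m-2)^{2}=1$ and $\tau(c)=c^{-1}$. Writing $\delta=\operatorname{disc}(p)$ and $\bar\delta=\operatorname{disc}(\bar p)=\tau(\delta)$, the relation $\delta=c^{6}\bar\delta$ gives $\tau(c^{3}\bar\delta)=c^{-3}\cdot c^{6}\bar\delta=c^{3}\bar\delta$, so $c^{3}\bar\delta\in\kk$; therefore $\operatorname{disc}(p)\operatorname{disc}(\bar p)=\delta\bar\delta=c^{6}\bar\delta^{2}=(c^{3}\bar\delta)^{2}\in\kk^{2}$, and finally $\operatorname{disc}(T)=\big(c^{3}\bar\delta\cdot\operatorname{Res}(p,\bar p)\big)^{2}\in\kk^{2}$. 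I expect the main obstacle to be purely bookkeeping: setting up the M\"obius transformation law for discriminants correctly (tracking the determinant power $n(n-1)=12$ and the scalar factor $(m-2)^{2n-2}=(m-2)^{6}$) and verifying $\mathcal N_{\kk(u)/\kk}(c)=1$. Once these are in hand, the $\tau$-invariance arguments make both the splitting-field coincidence and the squareness of $\operatorname{disc}(T)$ immediate.
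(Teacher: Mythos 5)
Your proposal is correct and follows essentially the same route as the paper: part (a) via the M\"obius correspondence of zeroes from Theorem~\ref{t:opensesame}, and part (b) via $\operatorname{disc}(T)=\operatorname{disc}(p)\operatorname{disc}(\bar p)\operatorname{Res}(p,\bar p)^2$, the $\tau$-invariance of the resultant, and the relation $\operatorname{disc}(p)=u^{6}\operatorname{disc}(\bar p)$ (your $c=(1-u)^{2}/(m-2)$ equals $u$ since $(1-u)^{2}=(m-2)u$), combined with $\mathcal N(u)=1$. The only differences are cosmetic: you obtain the discriminant relation from the general M\"obius transformation law rather than by differentiating the identity at the roots, and you are slightly more explicit than the paper about why $u=r\cdot\sigma^{2}(r)$ already lies in the $\kk$-splitting field of $T$.
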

\begin{proof}Adjoining to $\kk(u)$ either the zero $r$ of $p(x)$, or the zero $(-r - 1)/(r + u)$ of $\bar{p}(x)$, define the \emph{same} extension of $\kk(u)$.  The join of the extensions defined by all the zeroes thus gives the same splitting field for both quartics.

For (b), a standard formula (see, for instance, \cite{cohen:compalgnothr}, Corollary 3.3.6) gives
$$\text{disc}(T(m,A,x))=\text{disc}(p(x))\text{disc}(\bar{p}(x))[\text{Res}(p(x),\bar{p}(x))]^{2}.$$
The arguments of the resultant are interchanged by conjugation in $\kk(u)/\kk$, but also by 4 (evenly many) row interchanges of the Sylvester's matrix, which leaves its determinant unchanged.  Therefore, the resultant is in $\kk$.

If we differentiate the identity in Theorem~\ref{t:opensesame}, evaluate at the zeroes of $p(x)$, and multiply, we obtain $\text{disc}(p(x))=u^{6}\text{disc}(\bar{p}(x))$.  The discriminants are conjugate in $\kk(u)/\kk$ and $u$ has norm $1$, so disc$(p(x)) =cu^{3}$ for some $c \in \kk$.  Thus, 
$\text{disc}(p(x))\text{disc}(\bar{p}(x))=c^{2}\in\kk^{2}$, and the proof is complete.
\end{proof}
The situation is further simplified by the fact that $p(x)$ is a generalized reciprocal polynomial:
\begin{lem}
\label{l:genrecip1} If $m,A \in \kk$, $u^{2}-mu+1=0$, and $p(x)$ is as in \eqref{e:Q4}, then
$$\frac{p(x)}{x^{2}} = \frac{p\left(u/x\right)}{(u/x)^{2}}.$$
\end{lem}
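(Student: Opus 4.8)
The plan is to clear denominators and reduce the claim to a single polynomial identity, then settle it by matching coefficients. Since $(u/x)^{2}=u^{2}/x^{2}$, multiplying the asserted equality through by $u^{2}x^{2}$ shows it is equivalent to
$$u^{2}\,p(x)=x^{4}\,p(u/x),$$
an identity in $\kk(u)[x]$. So the whole lemma amounts to checking this one relation.

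Writing $p(x)=\sum_{i=0}^{4}a_{i}x^{i}$ with the coefficients read off from \eqref{e:Q4}, namely
$$a_{4}=1,\quad a_{3}=u-u^{-1}-A,\quad a_{2}=(2-A)u-A-4,\quad a_{1}=u^{2}-1-Au,\quad a_{0}=u^{2},$$
I would note that $x^{4}p(u/x)=\sum_{i=0}^{4}a_{i}u^{i}x^{4-i}$. Comparing coefficients of $x^{4-i}$ on the two sides, the identity $u^{2}p(x)=x^{4}p(u/x)$ holds if and only if the coefficients obey the ``generalized palindrome'' relations $u^{2}a_{i}=u^{4-i}a_{4-i}$ for $0\le i\le 4$, equivalently $a_{i}=u^{2-i}a_{4-i}$.

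Next I would verify these relations directly. By the symmetry $i\leftrightarrow 4-i$ it suffices to treat $i=0$ and $i=1$ (the case $i=2$ being trivial). For $i=0$ the relation reads $a_{0}=u^{2}a_{4}$, i.e.\ $u^{2}=u^{2}$; for $i=1$ it reads $a_{1}=u\,a_{3}$, and indeed $u(u-u^{-1}-A)=u^{2}-1-Au=a_{1}$. Here the only fact used about $u$ is that it is a unit, which is guaranteed by $u^{2}-mu+1=0$ (whence $u^{-1}=m-u$).

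I expect no genuine obstacle: the statement is a formal coefficient symmetry of $p(x)$, and the quadratic relation on $u$ enters only to make sense of the $u^{-1}$ appearing in $a_{3}$. The one point worth isolating cleanly is the reformulation $u^{2}p(x)=x^{4}p(u/x)$, after which the proof is a one-line coefficient match; note in particular that the parameter $m$ does not appear explicitly in the identity.
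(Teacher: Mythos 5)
Your proof is correct and takes essentially the same route as the paper, which simply declares the identity "straightforward polynomial algebra"; your reduction to $u^{2}p(x)=x^{4}p(u/x)$ and the coefficient relations $a_{i}=u^{2-i}a_{4-i}$ is exactly that computation, carried out explicitly and correctly.
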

\begin{proof}
Straightforward polynomial algebra.
\end{proof}
We use Lemma~\ref{l:genrecip1} to split $p(x)$ into quadratic factors.
\begin{lem}
\label{l:genrecip2}With $p(x)$ as in \textup{Eq.~\eqref{e:Q4}}  and $Y = x + u/x$,
\begin{enumerate}
\renewcommand{\labelenumi}{(\alph{enumi})}
\item $p(x) = x^{2}F(Y)$, where $F(Y)=Y^{2} + (-A + u - u^{-1})Y - A - 4 - Au$.
\item $\textup{disc}(F(Y)) = (m + 2 + A)^{2} - 4(m - 2)$.
\item If $F(Y) = (Y - \alpha)(Y - \beta)$, then
$$p(x) = q_{1}(x)q_{2}(x)=(x^{2} - \alpha x + u)(x^{2} - \beta x + u).$$
\item  Let $Q_{1}=q_{1}(-1)=1+\alpha+u$, $Q_{2}=q_{2}(-1)=1 + \beta+u$.  Then
\begin{enumerate}\renewcommand{\labelenumii}{(\roman{enumii})}
\item $\textup{disc}(x^{2} - \alpha x + u)=\alpha^{2}-4u=Q_{1}(Q_{1}+Q_{2} u-2(u+1))$.
\item $\textup{disc}(x^{2} - \beta x + u)=\beta^{2}-4u=Q_{2}(Q_{2}+Q_{1} u -2(u+1))$.
\end{enumerate}
\end{enumerate}
\end{lem}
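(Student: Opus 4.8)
The plan is to dispatch each part by direct algebra, with the generalized reciprocal property of Lemma~\ref{l:genrecip1} supplying the one nonroutine observation in part~(a). First I would divide $p(x)$ from \eqref{e:Q4} by $x^{2}$ to obtain
$$\frac{p(x)}{x^{2}} = \left(x^{2} + \frac{u^{2}}{x^{2}}\right) + (u - u^{-1} - A)\,x + \frac{u^{2} - 1 - Au}{x} + \big((2-A)u - A - 4\big).$$
The key point is that $u^{2} - 1 - Au = u\,(u - u^{-1} - A)$, so the two ``middle'' terms collapse to $(u - u^{-1} - A)(x + u/x) = (u - u^{-1} - A)Y$; this is precisely the $x \mapsto u/x$ symmetry guaranteed by Lemma~\ref{l:genrecip1}. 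Combining it with $x^{2} + u^{2}/x^{2} = Y^{2} - 2u$ and absorbing $-2u$ into the constant term yields $p(x)/x^{2} = F(Y)$ with the stated $F$, proving (a).

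For (b) I would compute $\operatorname{disc}(F) = (u - u^{-1} - A)^{2} - 4(-A - 4 - Au)$ and simplify using $u + u^{-1} = m$ and $u^{2} + u^{-2} = m^{2} - 2$, both immediate from \eqref{e:polyu}. Here $(u-u^{-1})^{2} = m^{2} - 4$, while the cross terms combine as $4Au - 2A(u - u^{-1}) = 2A(u + u^{-1}) = 2Am$, so everything collects to $m^{2} + A^{2} + 2Am + 4A + 12$, which one checks equals $(m+2+A)^{2} - 4(m-2)$. Part (c) is then formal: with $F(Y) = (Y-\alpha)(Y-\beta)$ and $p(x) = x^{2}F(Y)$ from (a), the identity $x(Y-\gamma) = x^{2} - \gamma x + u$ (valid for any $\gamma$, since $xY = x^{2}+u$) lets one distribute $x^{2} = x\cdot x$ over the two factors to read off $p(x) = (x^{2}-\alpha x + u)(x^{2}-\beta x + u)$.

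For (d) I would set $\alpha = Q_{1} - 1 - u$ and $\beta = Q_{2} - 1 - u$, and first record the two auxiliary identities $Q_{1} + Q_{2} = A + m + 2$ and $Q_{1}Q_{2} = m - 2$. These follow from Vieta applied to $F$, namely $\alpha + \beta = A - u + u^{-1}$ and $\alpha\beta = -A - 4 - Au$, together with $u + u^{-1} = m$. Substituting $\alpha = Q_{1} - 1 - u$ into the right-hand side $Q_{1}\big(Q_{1} + Q_{2}u - 2(u+1)\big)$, replacing $Q_{1}Q_{2}$ by $m-2$, and using $mu = u^{2}+1$ to clear the quadratic-in-$u$ contributions, the expression collapses to $\alpha^{2} - 4u = \operatorname{disc}(x^{2}-\alpha x + u)$; part (ii) is obtained by interchanging the roles of $\alpha,\beta$ and of $Q_{1},Q_{2}$.

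The only genuinely delicate point is part~(d): the factorizations of $\alpha^{2} - 4u$ and $\beta^{2} - 4u$ are not visible from Vieta alone, and the computation succeeds only because the identity $Q_{1}Q_{2} = m-2$ eliminates the product term while $mu = u^{2}+1$ removes the $u^{2}$ contributions. Everything else is routine polynomial manipulation of the kind already invoked in the proofs of Theorem~\ref{t:opensesame} and Lemma~\ref{l:genrecip1}.
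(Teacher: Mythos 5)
Your proposal is correct, and it takes the same route as the paper, whose entire proof is the single line ``Straightforward polynomial algebra'': you have simply written out that algebra, and each step checks out (in particular, the collapse in (d) rests on $Q_{1}Q_{2}=m-2$ together with $(u-1)^{2}=(m-2)u$ from $u^{2}-mu+1=0$, exactly as you say). Nothing further is needed.
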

\begin{proof}
Straightforward polynomial algebra.
\end{proof}
We then have
\begin{prop}\label{p:2pow}
Let $m,A\in\kk$, $m^2-4\notin\kk^{2}$, and $L/\kk$ the splitting field of $T(m,A,x)$.  Then $[L\!:\!\kk]$ divides \textup{16}.
\end{prop}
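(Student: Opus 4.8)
The plan is to exhibit $L$ over $\mathbf{k}$ as a tower of at most four steps, each of degree dividing $2$, so that $[L:\mathbf{k}]$ is a product of powers of $2$ bounded by $2^4=16$ and therefore divides $16$.

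First I would record the bottom step. By \eqref{e:polyu}, $u$ satisfies $u^{2}-mu+1=0$, a quadratic over $\mathbf{k}$ with discriminant $m^{2}-4$; since $m^{2}-4\notin\mathbf{k}^{2}$ by hypothesis, this polynomial is irreducible, so $[\mathbf{k}(u):\mathbf{k}]=2$. By Corollary~\ref{c:nrml}(a), the splitting field $L$ of $T(m,A,x)$ over $\mathbf{k}$ coincides with the splitting field of the single quartic $p(x)$ over $\mathbf{k}(u)$. Hence it suffices to prove that $[L:\mathbf{k}(u)]$ divides $8$, after which the multiplicativity of degrees in the tower $\mathbf{k}\subseteq\mathbf{k}(u)\subseteq L$ finishes the argument.

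For the degree over $\mathbf{k}(u)$ I would use the factorization supplied by Lemma~\ref{l:genrecip2}. Writing $Y=x+u/x$, we have $p(x)=x^{2}F(Y)$ with $F(Y)=(Y-\alpha)(Y-\beta)$ a quadratic over $\mathbf{k}(u)$. Put $M=\mathbf{k}(u,\alpha)$; then $[M:\mathbf{k}(u)]$ divides $2$, and since $\alpha+\beta=A-u+u^{-1}\in\mathbf{k}(u)$ we have $\beta\in M$ as well, so $M$ contains both roots of $F$. Over $M$, Lemma~\ref{l:genrecip2}(c) gives
$$p(x)=q_{1}(x)\,q_{2}(x)=(x^{2}-\alpha x+u)(x^{2}-\beta x+u),$$
a product of two quadratics with coefficients in $M$. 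The splitting field of each $q_{i}$ over $M$ has degree dividing $2$ (it is obtained by adjoining a root of a quadratic, namely $\sqrt{\operatorname{disc}(q_{i})}$ as computed in Lemma~\ref{l:genrecip2}(d) when $\operatorname{char}\mathbf{k}\neq2$), so their compositum $L$ satisfies $[L:M]\mid 4$. Combining the steps,
$$[L:\mathbf{k}]=[L:M]\,[M:\mathbf{k}(u)]\,[\mathbf{k}(u):\mathbf{k}]$$
is a product of a divisor of $4$, a divisor of $2$, and exactly $2$, each a power of $2$, whence $[L:\mathbf{k}]\mid 16$.

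I do not expect a genuine obstacle here: the result is pure degree bookkeeping once the structural lemmas are in hand. The only points demanding care are the invocation of Corollary~\ref{c:nrml}(a) to replace the splitting field of the octic $T$ by that of the single quartic $p$ over $\mathbf{k}(u)$, and the verification that the successive reductions—first to the quadratic $F$, then to the quadratic factors $q_{1},q_{2}$—genuinely exhaust the splitting, so that precisely the four displayed quadratic-or-trivial steps account for all of $L$.
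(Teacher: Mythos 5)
Your proof is correct and follows essentially the same route as the paper: reduce to the splitting field of $p(x)$ over $\kk(u)$ via Corollary~\ref{c:nrml}(a), then use the factorization of Lemma~\ref{l:genrecip2} to see that this splitting field has degree dividing $8$ over $\kk(u)$, with $[\kk(u)\!:\!\kk]=2$. You have merely spelled out the intermediate tower $\kk\subseteq\kk(u)\subseteq\kk(u,\alpha)\subseteq L$ that the paper leaves implicit.
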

\begin{proof}
Since $p(x)$ splits into quadratic factors in a quadratic extension (at most)  of its coefficient field $\kk(u)$, its splitting field over $\kk(u)$ has degree dividing $8$, and $\kk(u)$ has degree $2$ over $\kk$.  Corollary \ref{c:nrml}(a) then gives the result.
\end{proof}
%
%subsection
\subsection{The quadratic and quartic factors of $T(m,A,x)$}\label{ss:quadfacts}
Note that \\ disc$(F(Y))\in\kk$ in Lemma \ref{l:genrecip2}(b), even though $F(Y)\notin\kk[Y]$.
The quadratic factors $q_{1}(x)$ and $q_{2}(x)$ of $p(x)$ in Lemma \ref{l:genrecip2}(c)  (and, obviously, the corresponding factors of $\bar{p}(x)$) are in $\kk(s,w)[x]$, where
\begin{equation}\label{e:newvars}
s^{2}=m^{2}-4\text{ and }w^{2}=(m+2+A)^{2} -4(m-2)=\text{disc}(F(Y)).
\end{equation}
Taking $u=(m+s)/2$ and solving $F(Y)=0$ by quadratic formula, we find the quadratic factors of $T(m,A,x)$ in $\kk(s,w)[x]$ are
\begin{subequations}
\begin{align}
\label{e:q1}&q_{1}(x)=x^{2}+(-A+s-w)x/2 +(m+s)/2\\
\label{e:q2}&q_{2}(x)=x^{2}+(-A+s+w)x/2 +(m+s)/2,\\
\label{e:q3}&q_{3}(x)=x^{2}+(-A-s+w)x/2 +(m-s)/2\text{ , and}\\
\label{e:q4}&q_{4}(x)=x^{2}+(-A-s-w)x/2 +(m-s)/2.
\end{align}
\end{subequations}
These choices give  $q_{1}(x)q_{2}(x)=p(x)$ and $q_{3}(x)q_{4}(x)=\bar{p}(x)$.
By regrouping the $q_{i}(x)$ in pairs, we see that $T(m,A,x)$ splits into quartic factors in $\kk(s)[x]$, $\kk(sw)[x]$, and $\kk(w)[x]$, namely
\begin{subequations}
\begin{align}
\label{e:Ps}&P_{s}(x)=p(x)=q_{1}(x)q_{2}(x)\text{ and }\bar{P}_{s}(x)=\bar{p}(x)=q_{3}(x)q_{4}(x),\\
\label{e:Psw}&P_{sw}(x)=q_{1}(x)q_{3}(x)\text{ and }\bar{P}_{sw}(x)=q_{2}(x)q_{4}(x),\text{ and}\\
\label{e:Pw}&P_{w}(x)=q_{1}(x)q_{4}(x)\text{ and }\bar{P}_{w}(x)=q_{2}(x)q_{3}(x).
\end{align}
\end{subequations}
We have the following refinement of Theorem \ref{t:opensesame}:
\begin{lem}\label{l:identqs}
Let $m,A\in\kk$, $m^2-4\notin\kk^{2}$, and notation as above.  Then
\begin{align*}
&\textup{a) }(x+u)^{2}\,q_{3}\left(\frac{-x-1}{x+u}\right)=q_{3}(-1)q_{1}(x);\\
&\textup{b) }\textup{disc}(q_{1}(x))\textup{disc}(q_{3}(x))=((u-1)\textup{disc}(q_{3}(x))/q_{3}(-1))^{2};\\
&\textup{c) }(u-1)\textup{disc}(q_{3}(x))/q_{3}(-1)=((m + A - 2)s + (2-m)w)/2.
\end{align*}
\end{lem}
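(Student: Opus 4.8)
The plan is to prove the three parts in order, using (a) to drive (b) and routing (c) through the already-established factored discriminant formula so that the division by $q_{3}(-1)$ is exact by construction. For (a), I would simply clear denominators. Writing $q_{3}(x)=x^{2}+bx+c$ with $b=(-A-s+w)/2$ and $c=(m-s)/2$, and setting $\phi(x)=(-x-1)/(x+u)$, I would multiply $q_{3}(\phi(x))$ through by $(x+u)^{2}$ to obtain the quadratic
$$(x+u)^{2}q_{3}(\phi(x))=(x+1)^{2}-b(x+1)(x+u)+c(x+u)^{2}.$$
Its leading coefficient is $1-b+c=q_{3}(-1)$, so it remains only to check that the linear and constant coefficients agree with $q_{3}(-1)$ times those of $q_{1}$. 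The only facts needed are \eqref{e:polyu} in the forms $u^{2}=mu-1$ and $u^{-1}=m-u$, together with $s=2u-m$, $c=u^{-1}$, and the constant term $(m+s)/2=u$ of $q_{1}$; the match is a short coefficient comparison. Conceptually this is the per-factor refinement of Theorem \ref{t:opensesame}: since $\bar p=q_{3}q_{4}$ and $p=q_{1}q_{2}$, the order-$4$ map $\phi$ carries the root pair of $q_{3}$ onto that of $q_{1}$, and multiplying the two per-factor identities reproduces the global identity once one checks $q_{3}(-1)q_{4}(-1)=\bar p(-1)=m-2$.

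For (b), rather than expanding both discriminants I would exploit (a): it shows $q_{1}(x)$ and $(x+u)^{2}q_{3}(\phi(x))$ have the same roots, so the roots of $q_{1}$ are exactly $\phi^{-1}(y_{1}),\phi^{-1}(y_{2})$, where $y_{1},y_{2}$ are the roots of $q_{3}$. Since $\phi^{-1}(y)=(uy+1)/(-y-1)$ is a Möbius map of determinant $1-u$, the standard identity for the difference of its values gives
$$\phi^{-1}(y_{1})-\phi^{-1}(y_{2})=\frac{(1-u)(y_{1}-y_{2})}{(y_{1}+1)(y_{2}+1)}=\frac{(1-u)(y_{1}-y_{2})}{q_{3}(-1)},$$
using $(y_{1}+1)(y_{2}+1)=q_{3}(-1)$. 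Squaring and recognizing $(y_{1}-y_{2})^{2}=\operatorname{disc}(q_{3})$ yields $\operatorname{disc}(q_{1})=(u-1)^{2}\operatorname{disc}(q_{3})/q_{3}(-1)^{2}$; multiplying by $\operatorname{disc}(q_{3})$ gives (b) at once.

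For (c), the obstacle is that $q_{3}(-1)$ sits in a denominator, so I would first remove it using the conjugate (under $u\mapsto u^{-1}$, i.e.\ $s\mapsto -s$) of Lemma \ref{l:genrecip2}(d)(i), which applies to the factors $q_{3},q_{4}$ of $\bar p$:
$$\frac{\operatorname{disc}(q_{3})}{q_{3}(-1)}=q_{3}(-1)+q_{4}(-1)u^{-1}-2(u^{-1}+1).$$
Substituting $q_{3}(-1)=(2+A+m-w)/2$, $q_{4}(-1)=(2+A+m+w)/2$, $u^{-1}=(m-s)/2$, I would clear to a common denominator and multiply by $u-1=(m+s-2)/2$. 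Collecting the resulting numerator by its $s$-, $w$-, and $ws$-content and invoking only $s^{2}=m^{2}-4$, the scalar part factors as $(m+2)(m+A-2)$, the pure-scalar and the $ws$-contributions cancel, and the $s$- and $w$-contributions reduce to $4(m+A-2)s$ and $4(2-m)w$; dividing by $8$ gives $((m+A-2)s+(2-m)w)/2$.

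The main difficulty is organizational rather than conceptual: part (a) must pin down that $\phi$ sends the roots of $q_{3}$ to those of $q_{1}$ and not to those of $q_{2}$, and part (c) must avoid an apparent nonexact division by $q_{3}(-1)$. Both are resolved by the same device—factoring Theorem \ref{t:opensesame} and Lemma \ref{l:genrecip2}(d)(i) through the individual quadratics—after which everything reduces to coefficient comparisons using $u^{2}=mu-1$ and $s^{2}=m^{2}-4$. I note in particular that the defining relation for $w^{2}$ in \eqref{e:newvars} is never needed.
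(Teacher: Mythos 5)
Your proof follows essentially the same route as the paper's, which disposes of (a) and (c) as ``routine though tedious algebra'' and obtains (b) by differentiating the identity in (a), evaluating at the zeroes, and multiplying --- your M\"obius-determinant computation of $\phi^{-1}(y_{1})-\phi^{-1}(y_{2})$ is that same step in different clothing, and your reduction of (c) through the conjugate of Lemma \ref{l:genrecip2}(d) is sound. One correction, though: your closing remark that the defining relation for $w^{2}$ in \eqref{e:newvars} is never needed is false. In part (a) the leading and constant coefficients of $(x+1)^{2}-b(x+1)(x+u)+c(x+u)^{2}$ match those of $q_{3}(-1)q_{1}(x)$ using only $cu=1$ (i.e.\ $s^{2}=m^{2}-4$), but equating the coefficients of $x$ forces
$$16-(-A-s+w)(2+m+s)=(2+A+m-w)(-A+s-w),$$
and after cancelling the terms common to both sides and using $s^{2}=m^{2}-4$, what remains is exactly $w^{2}=m^{2}+A^{2}+2Am+4A+12=(m+2+A)^{2}-4(m-2)$. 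This is as it must be: with $w$ a free symbol the identity in (a) simply fails, since nothing would then tie the linear coefficients of $q_{1}$ and $q_{3}$ to the trace condition \eqref{e:Tr4}. Only in part (c) does the computation genuinely close up over $s^{2}=m^{2}-4$ alone, because the expression there is linear in $w$ throughout.
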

\begin{proof}For (a), there clearly must be an algebraic relation between $q_{1}(x)$ and either $q_{3}((-x-1)/(x+u))$ or $q_{4}((-x-1)/(x+u))$.  By Eqs.~\eqref{e:q1} and \eqref{e:q3}, the coefficient of $x^{3}$ in $q_{1}(x)q_{3}(x)$ is $-A$ in agreement with \eqref{e:Tr4}.  Routine though tedious algebra completes the verification.

For (b), differentiate (a), evaluate at the zeroes, and multiply.

Finally, (c) may be verified algebraically.
\end{proof}
\begin{rems}
The substitution $w\leftarrow -w$  replaces $(q_{1}(x), q_{3}(x))$ with $(q_{2}(x),q_{4}(x))$ in (a) -- (c).  Multiplying (a) by its counterpart gives Theorem \ref{t:opensesame}.
\end{rems}
To formulate $s$ (thus $u=(m+s)/2$ and $\sigma$) in the arithmetic of $\kk[x]$, let
\begin{subequations}
\begin{align}
\label{e:srat1}&P_{s}(x)=p(x)=\mathcal{A}(x)s+\mathcal{B}(x)\text{ where }\mathcal{A}(x),\mathcal{B}(x)\in\kk[x],\text{ and take}\\
\label{e:srat2}&s\equiv-\mathcal{B}(x)/\mathcal{A}(x)\pmod{T(m,A,x)},\text{ if Res}(\mathcal{A}(x),T(m,A,x))\ne 0.
\end{align}
\end{subequations}
Similarly, writing
\begin{subequations}
\begin{align}
\label{e:wrat1}&P_{w}(x)=\mathcal{C}(x)w+\mathcal{D}(x)\text{ where }\mathcal{C}(x),\mathcal{D}(x)\in\kk[x], \text{ we can take}\\
\label{e:wrat2}&w\equiv-\mathcal{D}(x)/\mathcal{C}(x)\pmod{T(m,A,x)} \text{ if Res}(\mathcal{C}(x),T(m,A,x))\ne 0.
\end{align}
\end{subequations}
We can express these resultants in terms of the ``monster'' resultant norm
\begin{equation}\label{e:monster}
\mu=\text{Res}(q_{1}(x),q_{3}(x))\text{Res}(q_{2}(x),q_{4}(x)).
\end{equation}
 Direct calculation (using plenty of computing power)  gives the results
\begin{subequations}
\begin{align}
\label{e:monsters}&\text{Res}(\mathcal{A}(x),T(m,A,x))=(m-2)^{2}\mu^{2}/256,\text{  and}\\
\label{e:monsterw}&\text{Res}(\mathcal{C}(x),T(m,A,x))=\mu^{2}/256.
\end{align}
\end{subequations}
\begin{lem}\label{l:murphy}
Let $m,A \in \kk$, and either $m=\pm 2$ or $m^2 - 4 \notin\kk^2$, with the $q_{i}(x)$ as in \textup{Eqs. \eqref{e:q1}--\eqref{e:q4}} and $\mu$ as in \textup{Eq. \eqref{e:monster}}.  Then,
$$\mu\ne 0\text{ unless}$$
$(m,A) = (2,-4)$; $(-2, 4i)$ or $(-2, -4i)$ if $-1\in\kk^{2}$; or $(2/3, -4/3)$ if $-2\notin\kk^{2}$.
\end{lem}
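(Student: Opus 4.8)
The plan is to reduce the vanishing of $\mu$ to the existence of common roots among the $q_i(x)$ and then to determine exactly when such a coincidence can occur at an admissible pair $(m,A)$. Since $\mu = \operatorname{Res}(q_1,q_3)\operatorname{Res}(q_2,q_4)$ and the resultant of two polynomials vanishes precisely when they share a root, $\mu = 0$ iff $q_1,q_3$ or $q_2,q_4$ have a common root; and the substitution $w \leftarrow -w$ interchanges the two pairs (cf.\ the Remarks after Lemma~\ref{l:identqs}), so it suffices to analyze $q_1,q_3$. Subtracting, $(q_1-q_3)(x) = (s-w)x + s$, so any common root is $\rho = s/(w-s)$; substituting into $q_1$ yields $\operatorname{Res}(q_1,q_3) = C - \tfrac12(2m+A)sw$ with $C = \tfrac12\bigl(mw^2 + (m^2-4)(m+A+2)\bigr)\in\kk$, and hence
$$\mu = C^2 - \tfrac14(2m+A)^2(m^2-4)w^2 \in \kk[m,A].$$
(This also displays $\mu$ as a norm from $\kk(s)$, explaining its name.)

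I would first dispose of the degenerate cases. If $m = \pm 2$ then $s = 0$, so $\mu = \tfrac14 m^2 w^4$ and $\mu = 0$ forces $w^2 = 0$; since $w^2 = (m+2+A)^2 - 4(m-2)$ this gives $A = -4$ when $m = 2$ and $A^2 = -16$ (needing $-1\in\kk^2$) when $m = -2$, i.e.\ the points $(2,-4)$ and $(-2,\pm 4i)$. If $m \ne \pm 2$ then admissibility means $m^2-4\notin\kk^2$, so $\kk(s)/\kk$ is a genuine quadratic extension and $\mu = 0$ is equivalent to $\operatorname{Res}(q_1,q_3) = 0$. When moreover $2m+A = 0$ the $sw$-term drops and the condition collapses to $C = 0$; with $A = -2m$ one finds $C = -(m-2)(3m-2)$, so $m = 2/3$, $A = -4/3$, and there $m^2 - 4 = -32/9$ lies in $\kk^2$ exactly when $-2\in\kk^2$, which is the stated exception.

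The substantive case, and the main obstacle, is $m \ne \pm 2$ with $2m+A \ne 0$. Here $\operatorname{Res}(q_1,q_3) = 0$ forces $sw = 2C/(2m+A) \in \kk$, so the common root $\rho = sw/(w^2-sw)$ itself lies in $\kk$; subtracting $q_3(\rho)$ from $q_1(\rho)$ gives $(s-w)\rho + s = 0$ (equivalently $s/w = \rho/(\rho+1)$), whereupon $q_1(\rho)=0$ reduces to the clean relation $2\rho^2 - A\rho + m = 0$. I would use these to parametrize the locus by $\rho$: the relation gives $m = A\rho - 2\rho^2$, and feeding this together with $(m^2-4)/w^2 = \rho^2/(\rho+1)^2$ into $w^2 = (m+2+A)^2 - 4(m-2)$ produces an equation in which the $A^2$-terms cancel, leaving a linear equation whose solution is
$$A = \frac{2\rho^5+\rho^4-4\rho^2-2\rho-1}{\rho^2(\rho^2+\rho+1)}, \qquad m = -\frac{(\rho^2+\rho+1)^2+\rho^2}{\rho(\rho^2+\rho+1)}.$$
The decisive point is then the factorization $m-2 = -(\rho+1)^4/\bigl(\rho(\rho^2+\rho+1)\bigr)$ and $m+2 = -(\rho^2+1)^2/\bigl(\rho(\rho^2+\rho+1)\bigr)$, which give
$$m^2 - 4 = \left(\frac{(\rho+1)^2(\rho^2+1)}{\rho(\rho^2+\rho+1)}\right)^2.$$
Thus along this whole component $m^2-4$ is a perfect square of a rational function of $\rho$, so every $\kk$-rational point on it ($\rho\in\kk$) has $m^2-4\in\kk^2$, contradicting admissibility; hence this case yields no admissible zeros, and the three listed families exhaust the solutions. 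The hard part will be recognizing the $\kk$-rational common root that drives the parametrization and checking that it forces $m^2-4$ to be a square — the clean cancellation to a linear equation and the two factorizations above are what make the argument go through.
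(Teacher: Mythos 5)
Your argument is correct, and it reaches the stated exceptional pairs by a genuinely different route from the paper's. The paper observes that a common zero of $q_{1}(x)$ and $q_{3}(x)$ must be a fixed point of one of the linear fractional maps $\sigma$ or $\sigma^{3}$ of Eq.~\eqref{e:C4} (via Lemma~\ref{l:identqs}(a)), reduces the existence of a common factor to the vanishing of the $3\times 3$ determinant $\mathbf{tv}=(-2m-A)s/2+(m-2)w/2-(m^2-4)/2$, and then uses the irrationality of $s$ to force $A=-2m$, after which only $(2/3,-4/3)$ survives. You instead compute the resultant explicitly: I verified that $\text{Res}(q_{1},q_{3})=m(w-s)^{2}/2-sA(w-s)/2+s^{2}$, which rearranges to your $C-\tfrac12(2m+A)sw$ with $C=\tfrac12\bigl(mw^{2}+(m^{2}-4)(m+A+2)\bigr)$; the cases $m=\pm2$ and $2m+A=0$ then fall out directly (your $C=-(m-2)(3m-2)$ when $A=-2m$ is right), and in the main case your parametrization of the locus by the common root $\rho$, with $m-2=-(\rho+1)^{4}/\bigl(\rho(\rho^{2}+\rho+1)\bigr)$ and $m+2=-(\rho^{2}+1)^{2}/\bigl(\rho(\rho^{2}+\rho+1)\bigr)$, does make $m^{2}-4$ a square in $\kk$, contradicting admissibility. (I checked these identities; e.g.\ $\rho=1$ gives $(m,A)=(-10/3,-4/3)$, where indeed $\text{Res}(q_1,q_3)=0$ and $m^2-4=(8/3)^2$.) The paper's argument is shorter and needs less computation; yours is self-contained (no appeal to Lemma~\ref{l:identqs}) and explains structurally \emph{why} the non-degenerate component contributes nothing: it lies entirely over $m^{2}-4\in\kk^{2}$.

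Three loose ends should be tied off. First, passing from $\rho=s/(w-s)$ to $\rho=sw/(w^{2}-sw)\in\kk$ silently assumes $w\neq 0$ and $w\neq s$; both are harmless ($w=s$ makes $q_{1}-q_{3}$ the nonzero constant $s$, and $w=0$ with $m\neq\pm2$ forces the putative common root to be $-1$ with $q_{1}(-1)^{2}=m-2\neq 0$), but say so. Second, your ``linear equation'' for $A$ has leading coefficient $-4\rho^{2}(\rho^{2}+\rho+1)$, so the case $\rho^{2}+\rho+1=0$ needs a word: there the constant term $2\rho^{5}+\rho^{4}-4\rho^{2}-2\rho-1\equiv -\rho^{2}\not\equiv 0$, so the system is inconsistent and such $\rho$ cannot occur. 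Third, the parenthetical about $\mu$ being a norm should refer to $\kk(sw)$ rather than $\kk(s)$.
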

\begin{proof}
If $\text{Res}(q_{1}(x),q_{3}(x)) = 0$, then $q_{1}(x)$ and $q_{3}(x)$ have a common factor in $\kk(s,w)[x]$, which is also a factor of every $\kk(s,w)$-linear combination of $q_{1}(x)$ and $q_{3}(x)$.  Then, by Lemma \ref{l:identqs}(a),  $q_{u}(x) = x^2 + (1 + u)x + 1$ or $\bar{q}_{u}(x) = x^2 + (u^{-1} + 1)x + 1$ has a common factor of degree $\ge 1$ with $q_{1}(x)$ and $q_{3}(x)$, because any common zero of $q_{1}(x)$ and $q_{3}(x)$ is a fixed point of (at least) one of the linear fractional transformations $\sigma(x)$ or $\sigma^{3}(x)$ in Eq. \eqref{e:C4}.

There can be no common factor of degree $\ge 1$ unless $\{q_{1}(x), q_{u}(x), q_{3}(x)\}$ or $\{q_{1}(x), \bar{q}_{u}(x), q_{3}(x)\}$ is a linearly dependent set in the $\kk(s,w)$ vector space $V$ with basis $\{1,x,x^{2}\}$.  Let $v=(q_{1}(x), q_{u}(x), q_{3}(x)) \in V^3$, and let  $M$ be the $3\times 3$ matrix whose $i,j$ entry is the coefficient of  $x^{3-j}$ in $v_{i}$.  The entries of $v$ are linearly dependent when the ``test value'' $\mathbf{tv}=\text{Det}(M)$ is $0$.  We have

$$\mathbf{tv}=(-2m - A)s/2+(m - 2)w/2 - (m^2 - 4)/2.$$

If $m = 2$, all three terms are $0$.  Then $w^2 = (A+4)^2$.  Taking $w = A+4$ and $s = 0$, $q_{1}(x) =x^2 - (A+2)x + 1$ and $q_{3}(x)=(x+1)^2$ as in \S\ref{ss:meqpm2}. Then $\gcd((q_{1}(x),q_{3}(x))=1$  unless $A = -4$, when $q_{1}(x)\equiv q_{3}(x)=(x+1)^2$.

If $m = -2$, $s=0$ and $\mathbf{tv}=0$ when $w = m+2=(-2)+2=0$.  But also $w^2 = A^2 + 16$, which is $0$ only for $A=\pm 4i$, when $q_{1}(x)\equiv q_{3}(x)=(x\mp i)^2$.

If $m\ne\pm 2$, then $s \notin \kk$. If $(-2m-A)\ne 0$, $(-2m-A)s/2\notin\kk$.  Then $\mathbf{tv}\notin\kk$ unless $(-2m-A)s/2+(m-2)w/2 = 0$.  But then $\mathbf{tv}\ne 0$ since $m\ne\pm 2$.

So if $m\ne \pm 2$, $\mathbf{tv}\ne 0$ unless $A = -2m$.  Again $w=m+2$, but also $w^2 = (m + 2 + (-2m))^2 - 4(m-2) = m^2 - 8m + 12$.  The two conditions on $w$ are only satisfied simultaneously when $m = 2/3$ and $A = -4/3$.  Substituting $(s,w)\leftarrow(-s,-w)$ changes $v$ to $v'=(q_{3}(x),\bar{q}_{u}(x), q_{1}(x))$, whose ``test value'' $\mathbf{tv}'$ is $0$ for the same $(m,A)$.  Substituting $(s,w)\leftarrow (s,-w)$ gives the argument for $\text{Res}(q_{2}(x),q_{4}(x))$.  If $m=2/3$, $m^{2}-4\notin\kk^{2}$ when $-2\notin\kk^{2}$.
\end{proof}
\begin{rem}
Direct calculation shows that
$$\mathbf{tv}\cdot\mathbf{tv}'=(2-m)\text{Res}(q_{1}(x),q_{3}(x)).$$
\end{rem}
\begin{lem}\label{l:contsw}
Let $m,A\in\kk$, $m^{2}-4\notin\kk^{2}$, and $s$ and $w$ as in \textup{Eq. \eqref{e:newvars}}.  If $T(m,A,r)=0$, then $\kk(s,w)\subset \kk(r)$.
\end{lem}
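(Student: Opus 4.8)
The plan is to show separately that $s\in\kk(r)$ and $w\in\kk(r)$, whence $\kk(s,w)\subset\kk(r)$. The key structural facts are that $T(m,A,x)=p(x)\bar{p}(x)=P_{w}(x)\bar{P}_{w}(x)$ (from \eqref{e:tmax} and \eqref{e:Pw}), so any root $r$ of $T$ is a root of $p$ or of $\bar{p}$, and likewise a root of $P_{w}$ or of $\bar{P}_{w}$. Writing $p=\mathcal{A}s+\mathcal{B}$ as in \eqref{e:srat1}, conjugation in $\kk(u)/\kk$ gives $\bar{p}=-\mathcal{A}s+\mathcal{B}$, since $\mathcal{A},\mathcal{B}\in\kk[x]$ and $\bar{s}=-s$. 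Hence, whichever of $p,\bar{p}$ has $r$ as a root, solving the resulting linear equation in $s$ yields $s=\mp\mathcal{B}(r)/\mathcal{A}(r)\in\kk(r)$, \emph{provided} $\mathcal{A}(r)\ne 0$. The identical argument applied to \eqref{e:wrat1} gives $w=\mp\mathcal{D}(r)/\mathcal{C}(r)\in\kk(r)$ provided $\mathcal{C}(r)\ne 0$.

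It remains to guarantee the nonvanishing of $\mathcal{A}(r)$ and $\mathcal{C}(r)$. Since $r$ is a root of $T$, it suffices that $\mathcal{A}$ and $\mathcal{C}$ share no root with $T$, i.e.\ that the resultants in \eqref{e:monsters} and \eqref{e:monsterw} are nonzero. Both equal a nonzero constant times $\mu^{2}$ (the factor $(m-2)^{2}$ in \eqref{e:monsters} being nonzero because $m^{2}-4\notin\kk^{2}$ forces $m\ne\pm 2$), so both are nonzero exactly when $\mu\ne 0$. By Lemma \ref{l:murphy}, under the standing hypothesis $m^{2}-4\notin\kk^{2}$ the only way to have $\mu=0$ is $(m,A)=(2/3,-4/3)$ with $-2\notin\kk^{2}$. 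Thus for all other admissible $(m,A)$ we conclude $\mathcal{A}(r),\mathcal{C}(r)\ne 0$, giving $s,w\in\kk(r)$ directly.

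The one remaining case, $(m,A)=(2/3,-4/3)$ with $-2\notin\kk^{2}$, I would dispatch by direct computation, and this is where the real work lies. Here $w^{2}=(4/3)^{2}-4(-4/3)=64/9$, so $w=8/3\in\kk$ and $w\in\kk(r)$ is automatic; one needs only $s\in\kk(r)$. Since $w\in\kk$, all four $q_{i}$ lie in $\kk(s)[x]$, and the shared zero of $q_{1}$ and $q_{3}$ responsible for $\mu=0$ is $\rho=-\tfrac13+\tfrac{s}{4}\in\kk(s)\setminus\kk$. Because each $q_{i}$ has constant term $u^{\pm 1}\in\kk(s)$, the product-of-roots relation forces the second zero of each $q_{i}$ into $\kk(s)$ as well; hence $T$ splits completely over $\kk(s)$, and a direct check shows every zero has nonzero $s$-component, so none lies in $\kk$. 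Therefore every root $r$ of $T$ satisfies $\kk(r)=\kk(s)\supseteq\kk(s,w)$, completing this case.

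The main obstacle is thus not the generic argument --- which is a routine combination of \eqref{e:srat1}--\eqref{e:wrat2}, the resultant evaluations \eqref{e:monsters}--\eqref{e:monsterw}, and Lemma \ref{l:murphy} --- but rather recognizing and cleanly handling the single degenerate pair $(2/3,-4/3)$, where the resultant machinery breaks down and one must instead verify directly that $T$ already splits over the quadratic field $\kk(s)$, with no root in $\kk$, so that every root generates $\kk(s)=\kk(s,w)$.
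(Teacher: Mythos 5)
Your proof is correct and follows essentially the same route as the paper: express $s$ and $w$ as rational functions of $r$ via Eqs.~\eqref{e:srat1}--\eqref{e:wrat2}, use the resultant evaluations \eqref{e:monsters}--\eqref{e:monsterw} together with Lemma~\ref{l:murphy} to see this works whenever $(m,A)\ne(2/3,-4/3)$, and check that pair directly. Your explicit treatment of the exceptional case (showing $w=8/3\in\kk$ and that $T$ splits over $\kk(s)$ with every zero generating $\kk(s)$) correctly fills in what the paper dismisses as ``may be verified directly.''
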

\begin{proof}
We may take $r\equiv x\pmod{T(m,A,x)}$.  Since $m\ne\pm 2$,  $s$ and $w$ are given by Eqs. \eqref{e:srat2} and \eqref{e:wrat2} unless $(m,A)=(2/3,-4/3)$, by Lemma \ref{l:murphy}.  This case may be verified directly.
\end{proof}
\begin{prop}\label{p:wnrml}
Let $m,A\in \kk$, $m^{2}-4\notin\kk^{2}$.  Then $P_{w}(x)$ and $\bar{P}_{w}(x)$ have the same splitting field over $\kk(w)$.
\end{prop}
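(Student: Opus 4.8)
My plan is to imitate the proof of Corollary~\ref{c:nrml}(a). There one shows that $p(x)$ and $\bar p(x)$ share a splitting field over $\kk(u)=\kk(s)$, because adjoining to $\kk(s)$ a zero $r$ of $p$, or its image $\sigma(r)=(-r-1)/(r+u)$ under the linear fractional map $\sigma$ of Eq.~\eqref{e:C4} (a zero of $\bar p$), produces the same extension; the decisive point is that $\sigma$ has coefficients in $\kk(s)$. I want the analogous statement for $P_w=q_1q_4$ and $\bar P_w=q_2q_3$ over $\kk(w)$. The difficulty is that $\sigma$ and its compositional inverse $\sigma^{-1}$ are defined over $\kk(s)$, \emph{not} over $\kk(w)$.

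The first step is to observe that the splitting field $F_1$ of $P_w$ over $\kk(w)$ nevertheless contains $s$. Indeed $F_1$ contains the zeros of $q_1$, hence the coefficient $(-A+s-w)/2$ of $x$ in $q_1(x)$ (see Eq.~\eqref{e:q1}), this being $-1$ times the sum of those zeros; since $A,w\in\kk(w)$, it follows that $s\in F_1$, and therefore $u=(m+s)/2$ and $u^{-1}=(m-s)/2$ lie in $F_1$ as well. Thus $\sigma$ and $\sigma^{-1}$ are linear fractional transformations defined over $F_1$. The identical argument, applied to a quadratic factor of $\bar P_w$, shows that $s$, and hence $u,u^{-1}$, also lie in the splitting field $F_2$ of $\bar P_w$ over $\kk(w)$.

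The second step is to match the $q_i$ under $\sigma$. Lemma~\ref{l:identqs}(a) says $\sigma$ carries the zeros of $q_1$ to zeros of $q_3$, and its $w\leftarrow -w$ conjugate (the Remark following that lemma) says $\sigma$ carries the zeros of $q_2$ to zeros of $q_4$; equivalently $\sigma^{-1}$ carries the zeros of $q_4$ to those of $q_2$. Because $m\ne\pm2$, none of the zeros in question is a pole of $\sigma$ or $\sigma^{-1}$ — concretely, $q_1(-u)=0$ or $q_4(-1)=0$ would each force $m=2$ — so these maps restrict to genuine bijections of the relevant pairs of zeros. Applying $\sigma$ (over $F_1$) to the zeros of $q_1$ and $\sigma^{-1}$ (over $F_1$) to the zeros of $q_4$ therefore produces all the zeros of $q_3$ and of $q_2$ inside $F_1$; hence $\bar P_w=q_2q_3$ splits in $F_1$ and $F_2\subseteq F_1$. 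Running the mirror-image argument inside $F_2$ — apply $\sigma^{-1}$ to the zeros of $q_3$ to recover those of $q_1$, and $\sigma$ to the zeros of $q_2$ to recover those of $q_4$ — shows $P_w$ splits in $F_2$, so $F_1\subseteq F_2$, and the two splitting fields coincide.

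The main obstacle is conceptual rather than computational: recognizing that, although the natural symmetry $\sigma$ lives over $\kk(s)$, the element $s$ is already manufactured inside each of $F_1$ and $F_2$ out of the zeros of a single quadratic factor, which is exactly what makes $\sigma,\sigma^{-1}$ usable over $\kk(w)$. After that, the only care required is the bookkeeping of pairing each $q_i$ with its image under $\sigma$ via Lemma~\ref{l:identqs}(a) and its conjugate, together with noting that the hypothesis $m^2-4\notin\kk^2$ (so $m\ne\pm2$), and the excluded parameter values of Lemma~\ref{l:murphy}, keep the transformations away from their poles.
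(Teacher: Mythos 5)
Your proof is correct and is essentially the argument the paper intends: its one-line proof just refers back to the splitting-field argument of Corollary~\ref{c:nrml}(a) together with Lemma~\ref{l:contsw}, and you have filled in exactly those details, using Lemma~\ref{l:identqs}(a) and its $w\leftarrow -w$ conjugate to transport zeroes between $P_w$ and $\bar P_w$ once $\sigma$ is known to be defined over each splitting field. The only (harmless) variation is that you recover $s$ from the coefficient of $x$ in $q_1(x)$ rather than citing Lemma~\ref{l:contsw}, which also lets you avoid the resultant machinery behind Eq.~\eqref{e:srat2}; the appeal to Lemma~\ref{l:murphy} at the end is unnecessary, since $m\neq\pm 2$ alone keeps the zeroes away from the poles.
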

\begin{proof}
This follows from an argument similar to that for Corollary \ref{c:nrml}(a), using Lemma \ref{l:contsw}.
\end{proof}
Brute-force algebra gives the simple expressions 
\begin{subequations}
\begin{align}
\label{e:dp12}&\text{disc}(q_{1}(x))\text{disc}(q_{2}(x))=(A^{2}-4(m-2))\cdot (u-1)^{2},\\
\label{e:res12}&\text{Res}(q_{1}(x),q_{2}(x))=w^{2}u,\\
\label{e:dp14}&\text{disc}(q_{1}(x))\text{disc}(q_{4}(x))=(A^{2}-4(m-2))\cdot Q_{1}^{2},\text{ and}\\
\label{e:res14}&\text{Res}(q_{1}(x),q_{4}(x))=s^{2}Q_{1}=(m^{2}-4)Q_{1},
\end{align}
\end{subequations}
where $Q_{1}=q_{1}(-1)=(m+2+A+w)/2$ is as in Lemma~\ref{l:genrecip2}(d).

From Eqs.~\eqref{e:dp12} and \eqref{e:dp14}, we see that the splitting field of $T(m,A,x)$ contains $\kk(y)$, where
\begin{equation}\label{e:lastnewvar}
y^{2}=A^{2}-4(m-2).
\end{equation}
We have the following result:
\begin{prop}\label{p:simple}
Let $m,A\in\kk$, $m^{2}-4\notin\kk^{2}$.  The zeroes of $T(m,A,x)$ are simple unless $w^{2}y^{2}=0$ or $(m,A)=(2/3,-4/3)$.
\end{prop}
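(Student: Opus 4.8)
The plan is to reduce the separability of $T(m,A,x)$ to the nonvanishing of a handful of discriminants and resultants, all of which are supplied by the explicit formulas already assembled, and then to invoke Lemma~\ref{l:murphy} for the one genuinely delicate pair of factors. Write $T(m,A,x)=q_{1}(x)q_{2}(x)q_{3}(x)q_{4}(x)$ as in Eqs.~\eqref{e:q1}--\eqref{e:q4}. Since each $q_{i}(x)$ is quadratic, the zeroes of $T$ are simple precisely when every $\text{disc}(q_{i}(x))\ne 0$ (so no factor has a repeated root) and every $\text{Res}(q_{i}(x),q_{j}(x))\ne 0$ for $i\ne j$ (so no two factors share a root). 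Because $m^{2}-4\notin\kk^{2}$ we have $m\ne\pm 2$, whence $u\ne 0$, $u\ne 1$, $m^{2}-4\ne 0$, and $m-2\ne 0$; I will use these repeatedly.

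For the discriminants, Eq.~\eqref{e:dp12} gives $\text{disc}(q_{1})\text{disc}(q_{2})=y^{2}(u-1)^{2}$, which is nonzero since $y^{2}\ne 0$ (as $w^{2}y^{2}\ne 0$) and $u\ne 1$; hence both $\text{disc}(q_{1})$ and $\text{disc}(q_{2})$ are nonzero. Conjugating by $s\mapsto -s$, which interchanges $q_{1}\leftrightarrow q_{4}$ and $q_{2}\leftrightarrow q_{3}$ and sends $u\mapsto u^{-1}$, turns this into $\text{disc}(q_{3})\text{disc}(q_{4})=y^{2}(u^{-1}-1)^{2}\ne 0$, so all four quadratics are separable. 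For the resultants I first note that $Q_{1}Q_{2}=q_{1}(-1)q_{2}(-1)=p(-1)=u+u^{-1}-2=m-2\ne 0$, so $Q_{1}\ne 0$ and $Q_{2}\ne 0$. Then Eq.~\eqref{e:res12} gives $\text{Res}(q_{1},q_{2})=w^{2}u\ne 0$, and its $s\mapsto -s$ conjugate gives $\text{Res}(q_{3},q_{4})=w^{2}u^{-1}\ne 0$; Eq.~\eqref{e:res14} gives $\text{Res}(q_{1},q_{4})=(m^{2}-4)Q_{1}\ne 0$, and its $w\mapsto -w$ conjugate (which swaps $q_{1}\leftrightarrow q_{2}$ and $q_{3}\leftrightarrow q_{4}$) gives $\text{Res}(q_{2},q_{3})=(m^{2}-4)Q_{2}\ne 0$. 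This disposes of four of the six pairs.

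The remaining pairs are $(q_{1},q_{3})$ and $(q_{2},q_{4})$, and this is the one genuinely hard step — it is exactly the content packaged in the ``monster'' resultant $\mu=\text{Res}(q_{1},q_{3})\text{Res}(q_{2},q_{4})$ of Eq.~\eqref{e:monster}. By Lemma~\ref{l:murphy}, $\mu\ne 0$ unless $(m,A)$ equals $(2,-4)$, $(-2,\pm 4i)$, or $(2/3,-4/3)$; the first three all have $m=\pm 2$ and so are excluded by $m^{2}-4\notin\kk^{2}$, while $(2/3,-4/3)$ is excluded by hypothesis. Hence $\mu\ne 0$, forcing both $\text{Res}(q_{1},q_{3})\ne 0$ and $\text{Res}(q_{2},q_{4})\ne 0$. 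All four discriminants and all six resultants are therefore nonzero, and the zeroes of $T(m,A,x)$ are simple. One may further check, using $w^{2}=\text{disc}(F(Y))$ in Lemma~\ref{l:genrecip2} together with Eq.~\eqref{e:dp12}, that each excluded case genuinely produces a repeated zero, so the exceptions are sharp; but only the stated direction is needed here.
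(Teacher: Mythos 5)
Your proof is correct and follows essentially the same route as the paper: it uses Eqs.~\eqref{e:dp12}--\eqref{e:res14} (together with the $s\mapsto -s$ and $w\mapsto -w$ conjugations) to rule out repeated factors and vanishing resultants for all pairs except $\{1,3\}$ and $\{2,4\}$, and then invokes Lemma~\ref{l:murphy} for those two. The only difference is that you have written out explicitly the bookkeeping the paper leaves implicit.
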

\begin{proof}
Eqs. \eqref{e:dp12} -- \eqref{e:res14} show that if $m\ne\pm 2$, $w^{2}\ne 0$, and $y^{2}\ne 0$, then none of the $q_{i}(x)$ have repeated factors, and $\text{Res}(q_{i}(x),q_{j}(x))\ne 0$ unless $\{i,j\}=\{1,3\}$ or $\{2,4\}$.  For these, apply Lemma \ref{l:murphy}.
\end{proof}
When $w=0$, $q_{1}(x)\equiv q_{2}(x)$ and $q_{3}(x)\equiv q_{4}(x)$, so $P_{w}(x)\equiv P_{sw}(x)$ and $T(m,A,x)=P^{2}_{sw}$ in $\kk[x]$. We deal with this case in \S\ref{ss:washquart}.  We see from Lemma \ref{l:identqs}(a) and its counterpart for $q_{2}(x)$ and $q_{4}(x)$, that except for the four pairs $(m,A)$ in Lemma \ref{l:murphy}, if $y^{2}\ne 0$ the zeroes of $P_{sw}(x)$ and $\bar{P}_{sw}(x)$ are simple, and the linear fractional transformation $\sigma$ in Eq. \eqref{e:C4} (defined via Eq. \eqref{e:srat2}) permutes the zeroes of each cyclically.

The expression in Lemma \ref{l:identqs}(c) for a square root of the discriminant norm leads to an irreducibility criterion for  $P_{sw}(x)$ and $\bar{P}_{sw}(x)$ in $\kk(sw)[x]$.
\newpage
\begin{thm}\label{t:irrcyc}Let $m,A\in\kk$, $m^2-4\notin\kk^2$.  Then
\begin{enumerate}\renewcommand{\labelenumi}{(\alph{enumi})}
\item $P_{sw}(x)$ and $\bar{P}_{sw}(x)$ are both irreducible in $\kk(sw)[x]$ if and only if
$$[\kk(s,w)\!:\!\kk(sw)]=2\;\text{and}\,\;A^{2}-4(m-2)\ne 0.$$
\item If $P_{sw}(x)$ and $\bar{P}_{sw}(x)$ are irreducible in $\kk(sw)[x]$, they define cyclic quartic extensions of $\kk(sw)$.  In each case, the action of the Galois group on the zeroes is given by \eqref{e:C4}, defined via \textup{Eq. \eqref{e:srat2}}.
\end{enumerate}
\end{thm}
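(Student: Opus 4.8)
The plan is to reduce both parts to the factorization $P_{sw}=q_{1}q_{3}$ over $\kk(s,w)$ together with the involution $\tau$ of $\kk(s,w)$ given by the substitution $(s,w)\mapsto(-s,-w)$. Comparing \eqref{e:q1}--\eqref{e:q4} shows $q_{3}=\tau(q_{1})$ and $q_{4}=\tau(q_{2})$; when $[\kk(s,w)\!:\!\kk(sw)]=2$, $\tau$ is the nontrivial element of $G(\kk(s,w)/\kk(sw))$ and $P_{sw}=q_{1}\tau(q_{1})$, $\bar P_{sw}=q_{2}\tau(q_{2})$ lie in $\kk(sw)[x]$. Since $s\ne 0$ (as $m\ne\pm 2$) we have $q_{1}\ne q_{3}$, so standard Galois descent gives (when $[\kk(s,w)\!:\!\kk(sw)]=2$) that $P_{sw}$ is irreducible over $\kk(sw)$ if and only if $q_{1}$ is irreducible over $\kk(s,w)$, and likewise $\bar P_{sw}$ corresponds to $q_{2}$. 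The second ingredient I would record at the outset is the discriminant of the \emph{quartic}: $\text{disc}(P_{sw})=\text{disc}(q_{1})\text{disc}(q_{3})\,\text{Res}(q_{1},q_{3})^{2}=\delta^{2}\,\text{Res}(q_{1},q_{3})^{2}$, where by Lemma~\ref{l:identqs}(b),(c) $\delta=((m+A-2)s+(2-m)w)/2$ satisfies $\tau(\delta)=-\delta$. As $\text{Res}(q_{1},q_{3})$ is $\tau$-fixed, the square root $\delta\,\text{Res}(q_{1},q_{3})$ lies in $\kk(s,w)$ but is $\tau$-anti-invariant, hence outside $\kk(sw)$ whenever it is nonzero; so $\text{disc}(P_{sw})$ is then a \emph{non-square} in $\kk(sw)$.

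For the forward implication of part~(a), I argue by contraposition. If $[\kk(s,w)\!:\!\kk(sw)]=1$ then $q_{1}\in\kk(sw)[x]$ is a proper quadratic factor and $P_{sw}$ is reducible. If $A^{2}-4(m-2)=0$ then by \eqref{e:dp12} and $u\ne 1$ (which holds since $m\ne 2$) one of $\text{disc}(q_{1})$, $\text{disc}(q_{2})$ vanishes, so $q_{1}$ or $q_{2}$ is a square and hence $P_{sw}$ or $\bar P_{sw}$ is reducible. Thus if both $P_{sw}$ and $\bar P_{sw}$ are irreducible, then $[\kk(s,w)\!:\!\kk(sw)]=2$ and $A^{2}-4(m-2)\ne 0$.

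The reverse implication is the crux. Assume $[\kk(s,w)\!:\!\kk(sw)]=2$ and $y^{2}=A^{2}-4(m-2)\ne 0$. First I dispose of the exceptional pairs of Lemma~\ref{l:murphy}: $m=\pm 2$ is barred by hypothesis, and $(m,A)=(2/3,-4/3)$ forces $w\in\kk$, whence $[\kk(s,w)\!:\!\kk(sw)]=1$, contrary to assumption. With these excluded, the discussion preceding the theorem shows $P_{sw}$ and $\bar P_{sw}$ are separable and $\sigma$ cyclically permutes the zeroes of each; in particular $\text{disc}(q_{1})$ and $\text{Res}(q_{1},q_{3})$ are nonzero, so $\delta\ne 0$ and, by the first paragraph, $\text{disc}(P_{sw})\notin\kk(sw)^{2}$. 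Now suppose, for contradiction, that $P_{sw}$ were reducible over $\kk(sw)$; by the descent this makes $q_{1}$, and therefore $q_{3}=\tau(q_{1})$, split over $\kk(s,w)$, so $P_{sw}$ splits completely there and (using $\kk(s,w)\subseteq\kk(r)$ from Lemma~\ref{l:contsw}) its splitting field over $\kk(sw)$ is exactly $\kk(s,w)$, with Galois group $\langle\tau\rangle\cong C_{2}$. But $\tau$ carries the zeroes of $q_{1}$ to those of $q_{3}$, acting on the four zeroes as a product of two transpositions---an \emph{even} permutation---so the Galois group lies in $A_{4}$ and $\text{disc}(P_{sw})\in\kk(sw)^{2}$, a contradiction. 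Hence $P_{sw}$ is irreducible, and the identical argument with $q_{2},q_{4}$ (and the conjugate square root $((m+A-2)s-(2-m)w)/2$) gives the irreducibility of $\bar P_{sw}$.

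For part~(b), suppose $P_{sw}$ and $\bar P_{sw}$ are irreducible; by part~(a) we may take $[\kk(s,w)\!:\!\kk(sw)]=2$ and $y\ne 0$. Let $P_{sw}(r)=0$. Since $T(m,A,r)=0$, Lemma~\ref{l:contsw} gives $\kk(s,w)\subseteq\kk(r)$, so $u=(m+s)/2\in\kk(r)$ and $\kk(r)=\kk(sw)(r)$; the remaining zeroes, being the images of $r$ under the maps \eqref{e:C4}, also lie in $\kk(r)$, so $\kk(r)/\kk(sw)$ is the splitting field and is Galois of degree $4$. Because $\text{disc}(P_{sw})$ is a non-square in $\kk(sw)$, the Galois group is a transitive order-$4$ subgroup of $S_{4}$ not contained in $A_{4}$, and the only such group is $C_{4}$. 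Its generator is the map \eqref{e:C4} realized via \eqref{e:srat2} (so that $u$ becomes a polynomial in $r$): this fixes $\kk(sw)$, since $\sigma(s)=-s$ by \eqref{e:sigmau} and, as $\sigma$ maps the zeroes of $q_{1}$ onto those of $q_{3}=\tau(q_{1})$, comparison of coefficients gives $\sigma(w)=-w$, whence $\sigma(sw)=sw$; and it permutes the zeroes cyclically, so $G=\langle\sigma\rangle\cong C_{4}$ with action \eqref{e:C4}. The same applies to $\bar P_{sw}$. I expect the main obstacle to be this reverse direction of part~(a), and specifically the identification of $\text{disc}(P_{sw})$ as a non-square via the $\tau$-anti-invariant square root $\delta$ of $\text{disc}(q_{1})\text{disc}(q_{3})$ from Lemma~\ref{l:identqs}(c): this is precisely what converts the delicate question of whether $\text{disc}(q_{1})$ is a square in $\kk(s,w)$ into the parity statement that an involution cannot split the quartic, sidestepping a direct square-class computation.
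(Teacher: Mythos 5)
Your proof is correct, and it rests on exactly the same pillars as the paper's: the factorization $P_{sw}=q_{1}q_{3}$ with $q_{3}=\tau(q_{1})$, the $\tau$-anti-invariant square root $\delta=((m+A-2)s+(2-m)w)/2$ of $\operatorname{disc}(q_{1})\operatorname{disc}(q_{3})$ from Lemma~\ref{l:identqs}(b),(c), Eq.~\eqref{e:dp12} for the degenerate case $A^{2}-4(m-2)=0$, and Lemmas~\ref{l:murphy} and \ref{l:contsw} for separability and for pinning down the splitting field. The one place you genuinely diverge is the final step of the reverse implication: the paper argues directly that $\operatorname{disc}(q_{1})$ is a non-square in $\kk(s,w)$ (if it were $t^{2}$, then $\delta=\pm t\,\tau(t)$ would be $\tau$-fixed, contradicting $\tau(\delta)=-\delta$ when $\delta\ne 0$), so $q_{1}$ is irreducible over $\kk(s,w)$ and $P_{sw}=q_{1}\tau(q_{1})$ is irreducible over $\kk(sw)$ by the conjugate-factor descent you also state; you instead multiply in $\operatorname{Res}(q_{1},q_{3})$ to see that $\operatorname{disc}(P_{sw})$ is a non-square in $\kk(sw)$, and rule out reducibility because a reducible $P_{sw}$ would split over $\kk(s,w)$ with $\tau$ acting as an even permutation, forcing the discriminant into $\kk(sw)^{2}$. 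Both finishes are sound; the paper's is marginally more direct, while yours has the advantage that the non-squareness of $\operatorname{disc}(P_{sw})$ in $\kk(sw)$ is exactly what you then reuse in part~(b) to force $G\cong C_{4}$ rather than $V_{4}$ --- a step the paper leaves entirely to the one-line reference back to the discussion after Eqs.~\eqref{e:dp12}--\eqref{e:res14}. Your verification there that $\sigma(s)=-s$ and $\sigma(w)=-w$ (so that $\sigma$ fixes $\kk(sw)$) is also more explicit than anything in the paper and is a worthwhile addition.
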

\begin{proof} First, $P_{sw}(x)=q_{1}(x)q_{3}(x)$ in $\kk(s,w)[x]$, so is reducible in $\kk(sw)[x]$ if $\kk(sw)=\kk(s,w)$. So suppose $[\kk(s,w)\!:\!\kk(sw)]=2$.  Conjugation in $\kk(s,w)/\kk(sw)$ changes the sign of the square root of  $\text{disc}(q_{1}(x))\text{disc}(q_{3}(x))$ in Lemma~\ref{l:identqs}(c), so neither $\text{disc}(q_{1}(x))$ nor $\text{disc}(q_{3}(x))$ is a square in $\kk(s,w)$ unless their product is 0.  Similarly for $q_{2}(x)$, $q_{4}(x)$ and $\bar{P}_{sw}(x)$.  Multiplying these square roots gives $(m-2)(A^{2}-4(m-2))$ as a square root of $\text{disc}(q_{1}(x))\text{disc}(q_{2}(x))\text{disc}(q_{3}(x))\text{disc}(q_{4}(x))$.  Now $m\ne 2$ by hypothesis, so if $A^{2}-4(m-2)\ne 0$, both $P_{sw}(x)$ and $\bar{P}_{sw}(x)$ are irreducible in $\kk(sw)[x]$.  But if $A^{2}-4(m-2)= 0$, at least one of $P_{sw}(x)$ and $\bar{P}_{sw}(x)$ has a repeated factor, so is reducible in $\kk(sw)[x]$.

Part (b) follows from the discussion after Eqs. \eqref{e:dp12}--\eqref{e:res14}.
\end{proof}
We then obtain an irreducibility criterion for $T(m,A,x)$:
\begin{cor}\label{c:irroct}
Let $m,A\in\kk$.  Then $T(m,A,x)$ is irreducible in $\kk[x]$  if and only if $s^2\notin\kk^2$, $w^2\notin\kk^2$, and $s^{2}w^2\notin\kk^2$, i.e. if and only if $[\kk(s,w)\!:\!\kk]=4$.
\end{cor}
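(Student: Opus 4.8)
The plan is to separate the two asserted equivalences. The equivalence between ``$s^{2},w^{2},s^{2}w^{2}\notin\kk^{2}$'' and ``$[\kk(s,w)\!:\!\kk]=4$'' is nothing but the standard criterion for a biquadratic extension generated by two square roots: for $a=s^{2}$, $b=w^{2}\in\kk$, the compositum $\kk(\sqrt{a},\sqrt{b})$ has degree $4$ over $\kk$ exactly when $a$, $b$, and $ab$ are all nonsquares (equivalently, $s^{2},w^{2},s^{2}w^{2}$ are independent in $\kk^{\times}/(\kk^{\times})^{2}$), any square among them collapsing the degree. I would state this and then reduce the corollary to proving that $T(m,A,x)$ is irreducible in $\kk[x]$ if and only if none of $s$, $w$, $sw$ lies in $\kk$.

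For \emph{necessity} I would argue by contrapositive using the quartic factorizations \eqref{e:Ps}, \eqref{e:Psw}, \eqref{e:Pw}. If $s\in\kk$ then $u=(m+s)/2\in\kk$, so $p(x)$ itself lies in $\kk[x]$ and is a proper quartic factor of $T$; if $s\notin\kk$ but $w\in\kk$ (so $m\ne\pm2$), then $P_{w}(x)\in\kk(w)[x]=\kk[x]$ is such a factor; and if $s,w\notin\kk$ but $sw\in\kk$, then $P_{sw}(x)\in\kk(sw)[x]=\kk[x]$ is such a factor. In every case $T$ is reducible, so irreducibility of $T$ forces all three of $s,w,sw$ to be irrational over $\kk$. (The degenerate possibility $m=\pm2$ is subsumed: there $s^{2}=0$ is a square.)

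For \emph{sufficiency} — the substantive direction — I would assume $[\kk(s,w)\!:\!\kk]=4$, so in particular $m^{2}-4=s^{2}\notin\kk^{2}$, placing us under the hypotheses of the earlier results, and $[\kk(s,w)\!:\!\kk(sw)]=2$ since $sw\notin\kk$. The aim is to apply Theorem \ref{t:irrcyc}(a) to conclude that $P_{sw}(x)$ and $\bar{P}_{sw}(x)$ are irreducible over $F:=\kk(sw)$; beyond $[\kk(s,w)\!:\!\kk(sw)]=2$ that theorem needs $A^{2}-4(m-2)=y^{2}\ne0$. The one genuinely computational point is that $[\kk(s,w)\!:\!\kk]=4$ already forces $y^{2}\ne0$: when $A^{2}=4(m-2)$ one computes $s^{2}=(A/4)^{2}(A^{2}+16)$ and $w^{2}=((A+4)/4)^{2}(A^{2}+16)$, so that $s^{2}w^{2}$ is a square in $\kk$, contradicting $s^{2}w^{2}\notin\kk^{2}$. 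With $P_{sw}$ irreducible over $F$, I finish by Galois descent: $T=P_{sw}\bar{P}_{sw}$ is the factorization of $T$ into distinct (since $s\ne0\ne w$) monic irreducibles over $F$, and $\bar{P}_{sw}=\tau(P_{sw})$, where $\langle\tau\rangle=\mathrm{Gal}(F/\kk)$ is the restriction of the map $s\mapsto-s$ to $F$. Any factor $g\in\kk[x]$ of $T$ is $\tau$-stable, hence its set of irreducible factors over $F$ is $\tau$-invariant; so $P_{sw}\mid g$ implies $\bar{P}_{sw}\mid g$, whence $T\mid g$ and $T$ is irreducible over $\kk$.

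The main obstacle, and the only step requiring real work, is establishing $y^{2}\ne0$ from the degree-$4$ hypothesis; everything else is bookkeeping with the factorizations and a routine descent. I expect the cleanest route to be the two explicit factorizations of $s^{2}$ and $w^{2}$ above, which simultaneously exhibit $s^{2}w^{2}$ as a square precisely in the forbidden case $y^{2}=0$.
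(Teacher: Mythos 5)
Your proof is correct and follows essentially the same route as the paper: both reduce to Theorem \ref{t:irrcyc}(a) via the factorization $T=P_{sw}\bar{P}_{sw}$ over $\kk(sw)$ and then observe that the hypothesis $A^{2}-4(m-2)\ne 0$ of that theorem is forced by $[\kk(s,w)\!:\!\kk]=4$. The only differences are cosmetic: where you exhibit $s^{2}w^{2}$ as a square of $\kk$ when $A^{2}=4(m-2)$, the paper instead writes $w^{2}=(2s/A)^{2}(A/2+2)^{2}\in(\kk(s))^{2}$ (handling $m=2$ separately because this divides by $A$), and you spell out the necessity direction and the final Galois descent that the paper compresses into ``clearly.''
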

\begin{proof}
Clearly, $T(m,A,x)$ is irreducible in $\kk[x]$ if and only if  $P_{sw}(x)$ and $\bar{P}_{sw}(x)$ are irreducible in $\kk(sw)[x]$ (which requires $[\kk(s,w)\!:\!\kk(sw)]=2$), and $[\kk(sw)\!:\!\kk]=2$.  We only need to show that, if $A^{2}-4(m-2)=0$, then $[\kk(s,w)\!:\!\kk]<4$.  This is trivial if $m=2$.  Otherwise, $A\ne 0$, so we can write
$$w^{2}=(m+2)(m+2+2A)=(2s/A)^2(A/2+2)^2\in(\kk(s))^{2}.$$
\end{proof}
%
%subsection
\subsection{Pairs of related octics}\label{ss:relocts}
Eqs.~\eqref{e:newvars} and \eqref{e:dp12}-\eqref{e:res14} show the splitting field of $T(m,A,x)$ contains $E=\kk(s,w,y)$, where
$$s^{2}=m^{2}-4,\;w^{2}=(m+A+2)^{2}-4(m-2),\text{ and }y^{2}=A^{2}-4(m-2).$$
Note that the substitution $A\leftarrow -m-2-A$ has compositional order 2, and interchanges $w^{2}$ and $y^{2}$.  [This substitution gives the changes of parameter in \S\ref{ss:meqpm2}.] We call $T(m,A,x)$ and $T(m,-m-2-A,x)$ \emph{related} octics.  The following result indicates how closely they are related:
\begin{thm}\label{t:relocts}Let $m,A\in\kk$, and $s^{2}=m^{2}-4\notin\kk^{2}$.  Then $T(m,A,x)$ and $T(m,-m-2-A,x)$ have the same splitting field over $\kk$.
\end{thm}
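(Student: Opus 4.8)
The plan is to exploit the fact that the substitution $A\mapsto A':=-m-2-A$ is its own inverse, fixes $s^{2}=m^{2}-4$, and interchanges $w^{2}$ and $y^{2}$ (Eq.~\eqref{e:newvars}). Consequently the elementary abelian field $E=\kk(s,w,y)$ attached to $T(m,A,x)$ is literally the same field $\kk(s,y,w)$ attached to $T(m,A',x)$, and by Eqs.~\eqref{e:newvars} and \eqref{e:dp12}--\eqref{e:res14} it lies inside \emph{both} splitting fields. Writing $L$ and $L'$ for the splitting fields of $T(m,A,x)$ and $T(m,A',x)$, the whole problem reduces to comparing the two degree-$(\le 2)$ extensions $L/E$ and $L'/E$.

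First I would show $L=E(\sqrt{D_{1}})$, where $D_{1}=\operatorname{disc}(q_{1})$. Since $m\ne\pm2$, over $E$ the octic splits as $T(m,A,x)=q_{1}(x)q_{2}(x)q_{3}(x)q_{4}(x)$ with each $q_{i}\in E[x]$ of degree $2$ (Lemma~\ref{l:genrecip2}, Eqs.~\eqref{e:q1}--\eqref{e:q4}), so the splitting field over $E$ is $E(\sqrt{D_{1}},\sqrt{D_{2}},\sqrt{D_{3}},\sqrt{D_{4}})$. Equations~\eqref{e:dp12} and \eqref{e:dp14}, together with Lemma~\ref{l:identqs}(b),(c), exhibit $D_{1}D_{2}=(y(u-1))^{2}$, $D_{1}D_{4}=(yQ_{1})^{2}$, and $D_{1}D_{3}=(((m+A-2)s+(2-m)w)/2)^{2}$ each as a square in $E$; hence all four discriminants lie in a single square class of $E^{\times}$ and $L=E(\sqrt{D_{1}})$, with $[L:E]\le2$. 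Applying the identical argument to $T(m,A',x)$---whose four quadratic factors are obtained from Eqs.~\eqref{e:q1}--\eqref{e:q4} by the replacement $(A,w)\mapsto(A',y)$, since its own discriminant variable $\sqrt{(m+A'+2)^{2}-4(m-2)}$ equals $y$---gives $L'=E(\sqrt{D_{1}'})$ with $D_{1}'=\operatorname{disc}(q_{1}')$.

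The statement $L=L'$ is then equivalent to the single square-class identity $D_{1}D_{1}'\in(E^{\times})^{2}$. A short computation from Eqs.~\eqref{e:q1}--\eqref{e:q4} gives
\begin{align*}
2D_{1}&=P_{0}-(A+4)s+(A-s)w,\\
2D_{1}'&=P_{0}+(m+A-2)s-(m+2+A+s)y,
\end{align*}
where $P_{0}=A^{2}+m^{2}+mA+2A-4m+4$ is the common ``diagonal'' part forced by the $w\leftrightarrow y$ symmetry. It then remains to verify that the product $(2D_{1})(2D_{1}')$ is a square in $E=\kk(s,w,y)$; explicitly, one produces an element $\mu+\nu w+\rho y+\tau wy$ with $\mu,\nu,\rho,\tau\in\kk(s)$ whose square is this product. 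I expect this brute-force verification in $E$ to be the main obstacle: it plays the same role here that the identity of Theorem~\ref{t:opensesame} and the resultant identities \eqref{e:monsters}--\eqref{e:monsterw} play elsewhere, and like them it is most safely confirmed with symbolic algebra. (A sample check at $\kk=\Q$, $m=0$, $A=1$ yields $D_{1}D_{1}'=(1-\tfrac{9+w}{2}\,i)^{2}$, confirming the mechanism.)

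Finally, the reduction above presumes the $q_{i}$ have nonzero discriminant, i.e.\ that the zeroes of $T(m,A,x)$ are simple; by Proposition~\ref{p:simple} this holds unless $w^{2}y^{2}=0$ or $(m,A)=(2/3,-4/3)$. Because the involution $A\mapsto A'$ is its own inverse, it suffices to prove $L\subseteq L'$, so these remaining low-dimensional loci---$w=0$, $y=0$, and the single exceptional pair---can be disposed of by direct inspection; they coincide with the ``collapsing'' and degenerate families treated in the later subsections, where the factorization structure is already explicit.
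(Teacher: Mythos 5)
Your proposal follows essentially the same route as the paper: both reduce the theorem, via the common field $E=\kk(s,w,y)$ and the observation (from Eqs.~\eqref{e:dp12}, \eqref{e:dp14} and Lemma~\ref{l:identqs}) that all four quadratic-factor discriminants of each octic lie in one square class, to the single identity that $\operatorname{disc}(q_{1})\operatorname{disc}(\psi_{1})$ is a square in $E$, and both dispose of the degenerate loci $w^{2}y^{2}=0$ separately. The only difference is that the paper actually exhibits the square root -- an element $c_{0}+c_{1}s+\cdots+c_{6}wy$ with $c_{7}=0$, i.e., exactly of the shape $\mu+\nu w+\rho y+\tau wy$ with $\mu,\nu,\rho,\tau\in\kk(s)$ that you predicted -- found by symbolic computation after the substitution $A=-(m+2)/2+\mathbf{v}$, whereas you leave that (correct) verification to the computer.
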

\begin{proof}
We consider the corresponding quadratic factors
\begin{align*}
&q_{1}(x) = x^{2}+(-A+s-w)x/2 +(m+s)/2 \text{ as in Eq.\ \eqref{e:q1}, and}\\
&\psi_{1}(x)=x^{2}+(m+2+A+s-y)x/2 +(m+s)/2 \text{ with $y$ as in Eq.\ \eqref{e:lastnewvar}}.
\end{align*}
If $w^{2}y^{2}\ne 0$, Eqs.~\eqref{e:dp12}, \eqref{e:dp14}, and Lemma \ref{l:identqs} show that the $q_{i}(x)$ all define the \emph{same} extension of $E$.  Similarly for the corresponding $\psi_{i}(x)$.  Greatly facilitated by the substitution $A=-(m+2)/2+\mathbf{v}$ and Phil Carmody's guidance with Pari-GP, we found an algebraic square root of $\text{disc}(q_{1}(x))\text{disc}(\psi_{1}(x))$ in $E$, namely
$$c_{0}+c_{1}s+c_{2}w+c_{3}y+c_{4}sw+c_{5}sy+c_{6}wy+c_{7}swy,\text{ where}$$
\begin{equation*}
\begin{split}
&c_{0}=(3m^2 - 20m + 12+4\mathbf{v}^2)/16 ,\;c_{1}=(m - 6)/4,\;c_{2}=(-3m + 2+2\mathbf{v})/8,\\
&c_{3}=(-3m + 2-2\mathbf{v})/8,\;c_{4}=-1/4,\;c_{5}=-1/4,\;c_{6}=-1/4,\;\text{and}\,\;c_{7}=0.
\end{split}
\end{equation*}
If $w^{2}=0$, at least one of $\text{disc}(q_{1}(x))\text{disc}(\psi_{1}(x))$ and $\text{disc}(q_{1}(x))\text{disc}(\psi_{2}(x))$ will be nonzero unless $y^{2}=0$ also.  (Replacing $\psi_{1}(x)$ with $\psi_{2}(x)$ has the effect of replacing $y$ with $-y$ in the algebraic square root.)  But $w^{2}=y^{2}=0$ only when $m=6$ and $A=-4$, for which the two related octics are identical, $T(6,-4,x)=(x^{2}+2x-1)^{4}$.
\end{proof}
%
%subsection
\subsection{The Galois group of $T(m,A,x)$}\label{ss:galgrp}
We can now prove the main results about the splitting field $L/\kk$ of $T(m,A,x)$.
\begin{thm}\label{t:galgp1}Let $m,A\in\kk$, and $[E\!:\!\kk]=8$ where $E=\kk(s,w,y)$.  Then
\begin{enumerate}\renewcommand{\labelenumi}{(\alph{enumi})}
\item $T(m,A,x)$ is irreducible in $\kk[x]$ with Galois group $G\cong$ $_{8}T_{11}$.
\item The fixed field of the quaternion subgroup is $\kk(swy)$.
\item The fixed fields of the subgroups $\cong D_{4}$ are $\kk(s)$, $\kk(w)$, and $\kk(y)$.
\item The fixed fields of the subgroups $\cong C_{4}\times C_{2}$ are $\kk(sw)$, $\kk(sy)$, and $\kk(wy)$.
\end{enumerate}
\end{thm}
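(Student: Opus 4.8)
The plan is to prove (a) by pinning $G$ down exactly, and then to read off (b)--(d) from the correspondence $U\mapsto L^{U}$ between the seven maximal subgroups of $G$ and the seven quadratic subfields of $E$. First I would settle the degree and coarse structure. Since $[E\!:\!\kk]=8$ the three square roots are independent, so $[\kk(s,w)\!:\!\kk]=4$ and $T(m,A,x)$ is irreducible by Corollary~\ref{c:irroct}; hence $8\mid[L\!:\!\kk]$, while $[L\!:\!\kk]\mid 16$ by Proposition~\ref{p:2pow}. To force equality I would use Theorem~\ref{t:irrcyc}: $P_{sw}(x)$ and $\bar P_{sw}(x)$ are irreducible over $\kk(sw)$ and define cyclic quartic fields $M,\bar M\subseteq L$, each containing $\kk(s,w)$ (Lemma~\ref{l:contsw}) as its unique quadratic subfield. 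By \eqref{e:dp12} the product of the discriminants of the factors of $p(x)$ is $y^{2}$ times a square, and $y\notin\kk(s,w)$, so $M\neq\bar M$; thus $L=M\bar M$ has degree $8$ over $\kk(sw)$ and $[L\!:\!\kk]=16$. As a compositum of two cyclic quartics, $L/\kk(sw)$ is abelian, and the fibre-product computation gives $\mathrm{Gal}(L/\kk(sw))\cong C_{4}\times C_{2}$. Finally $E=L^{[G,G]}$ with $\mathrm{Gal}(E/\kk)\cong C_{2}^{3}$ and $[L\!:\!E]=2$, so $G^{\mathrm{ab}}\cong C_{2}^{3}$, $[G,G]\cong C_{2}$, and $G\subseteq A_{8}$ by Corollary~\ref{c:nrml}(b).

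These three properties are shared by exactly three groups of order $16$: $D_{4}\times C_{2}$, $Q_{8}\times C_{2}$, and $_{8}T_{11}$, so the whole difficulty is to separate them, which I would do through their maximal-subgroup profiles. The group $Q_{8}\times C_{2}$ contains no $D_{4}$, while $D_{4}\times C_{2}$ has only one maximal subgroup $\cong C_{4}\times C_{2}$; only $_{8}T_{11}$ both contains a $D_{4}$ and has more than one $C_{4}\times C_{2}$ among its maximal subgroups. To exhibit a $D_{4}$: over $\kk(s)$ the quartics $p(x)$ and $\bar p(x)$ share the splitting field $L$ (Corollary~\ref{c:nrml}(a)), so $L$ is the splitting field of the single quartic $p(x)\in\kk(s)[x]$; as $[L\!:\!\kk(s)]=8$, $\mathrm{Gal}(L/\kk(s))$ is the unique transitive subgroup of $S_{4}$ of order $8$, namely $D_{4}$, and this rules out $Q_{8}\times C_{2}$. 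To exhibit a second $C_{4}\times C_{2}$: the related octic $T(m,-m-2-A,x)$ has the same splitting field $L$ (Theorem~\ref{t:relocts}) and interchanges $w$ and $y$, so the first paragraph applied to it yields $\mathrm{Gal}(L/\kk(sy))\cong C_{4}\times C_{2}$, a subgroup distinct from $\mathrm{Gal}(L/\kk(sw))$. This rules out $D_{4}\times C_{2}$ and forces $G\cong {}_{8}T_{11}$, proving (a).

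Knowing $G\cong{}_{8}T_{11}$, its seven maximal subgroups---one $Q_{8}$, three $D_{4}$, three $C_{4}\times C_{2}$---correspond bijectively to the seven quadratic subfields of $E$. The $D_{4}$ argument for $\kk(s)$ applies verbatim to $\kk(w)$ using Proposition~\ref{p:wnrml}, and to $\kk(y)$ using the related octic again; thus $\kk(s),\kk(w),\kk(y)$ are three distinct fixed fields of $D_{4}$'s, which exhausts them, giving (c). Two of the three $C_{4}\times C_{2}$ fixed fields are already identified as $\kk(sw),\kk(sy)$, and it remains only to decide which of $\kk(wy),\kk(swy)$ carries the last $C_{4}\times C_{2}$ and which carries $Q_{8}$.

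For this last step I would use the commutator pairing $\beta$ on $G^{\mathrm{ab}}\cong C_{2}^{3}$ valued in $[G,G]$. Writing $\tau_{s},\tau_{w},\tau_{y}$ for the sign changes $s\mapsto-s$, $w\mapsto-w$, $y\mapsto-y$ on $E$ and taking lifts to $G$, the preimages are $\mathrm{Gal}(L/\kk(s))=\langle\tau_{w},\tau_{y},\sigma^{2}\rangle$ and cyclically, where $\sigma$ is the order-$4$ map \eqref{e:C4} realized through \eqref{e:srat2} and $\langle\sigma^{2}\rangle=[G,G]$ is central. Since $\kk(s),\kk(w),\kk(y)$ each give a \emph{non}abelian group, the three images $\beta(\bar\tau_{w},\bar\tau_{y})$, $\beta(\bar\tau_{s},\bar\tau_{y})$, $\beta(\bar\tau_{s},\bar\tau_{w})$ are all nontrivial; bilinearity over $\mathbb{F}_{2}$ then gives $\beta(\bar\tau_{s},\bar\tau_{w}\bar\tau_{y})=\beta(\bar\tau_{s},\bar\tau_{w})+\beta(\bar\tau_{s},\bar\tau_{y})=0$, so $\mathrm{Gal}(L/\kk(wy))=\langle\tau_{s},\tau_{w}\tau_{y},\sigma^{2}\rangle$ is abelian, hence $\cong C_{4}\times C_{2}$, completing (d); and $\beta(\bar\tau_{s}\bar\tau_{w},\bar\tau_{s}\bar\tau_{y})=\beta(\bar\tau_{s},\bar\tau_{y})+\beta(\bar\tau_{w},\bar\tau_{s})+\beta(\bar\tau_{w},\bar\tau_{y})\neq0$, so $\mathrm{Gal}(L/\kk(swy))$ is nonabelian and, the three $D_{4}$'s being used up, is the remaining $Q_{8}$, giving (b). I expect the genuine difficulty to lie precisely in separating $_{8}T_{11}$ from its two look-alikes $D_{4}\times C_{2}$ and $Q_{8}\times C_{2}$: they agree with it on $G^{\mathrm{ab}}$, on $[G,G]$, and even on the congruence class of the commutator pairing, so only the \emph{geometric} inputs---the shared-splitting-field facts of Corollary~\ref{c:nrml}(a) and Proposition~\ref{p:wnrml}, together with the related-octic symmetry of Theorem~\ref{t:relocts}---can produce the $D_{4}$ subgroup and the second $C_{4}\times C_{2}$ that distinguish them.
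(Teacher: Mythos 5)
Your proof is correct, and while it shares the paper's skeleton (irreducibility via Corollary~\ref{c:irroct}, $\abs{G}=16$, narrowing to three candidate groups, then reading off maximal subgroups) and handles part (c) identically, the two decisive steps are genuinely different. First, your candidate list is not the paper's: the paper restricts to the order-16 \emph{transitive} subgroups of $A_{8}$, namely $_{8}T_{9}$, $_{8}T_{10}$, $_{8}T_{11}$, using Corollary~\ref{c:nrml}(b), whereas you classify abstractly the nonabelian groups of order 16 with $G^{\mathrm{ab}}\cong C_{2}^{3}$, getting $D_{4}\times C_{2}$, $Q_{8}\times C_{2}$, $_{8}T_{11}$ --- note $Q_{8}\times C_{2}$ is not even a transitive group of degree 8 (all its involutions are central), so the lists really differ, and your route never needs $G\subseteq A_{8}$. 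Second, the paper separates its candidates by \emph{constructing} the quaternion subgroup: it shows $L$ is cyclic quartic over each of $\kk(s,swy)$, $\kk(sw,swy)$, $\kk(w,swy)$ and that the three order-4 generators send $q_{1}(x)$ to $q_{2}(x)$, $q_{3}(x)$, $q_{4}(x)$ respectively, giving six elements of order 4 in the stabilizer of $\kk(swy)$, hence $Q_{8}$; this single computation yields (a) and (b) simultaneously, with (d) by elimination. You instead separate the candidates by exhibiting the $D_{4}$ over $\kk(s)$ together with \emph{two} distinct $C_{4}\times C_{2}$'s over $\kk(sw)$ and $\kk(sy)$ (the second via the related octic of Theorem~\ref{t:relocts}), and only afterwards sort $\kk(wy)$ from $\kk(swy)$ by the $\mathbb{F}_{2}$-bilinear commutator pairing on $G^{\mathrm{ab}}$, using nothing beyond the locations of the three $D_{4}$'s. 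Your version avoids the paper's most delicate step (the count of order-4 elements) at the cost of the extra group-theoretic classification and the pairing formalism; the paper's version gives (b) constructively rather than by parity. One small point to tighten: your assertion $E=L^{[G,G]}$, hence $G^{\mathrm{ab}}\cong C_{2}^{3}$, presupposes $G$ nonabelian --- this is immediate (an abelian transitive group is regular, impossible for order 16 on 8 points, or cite the $D_{4}$ you construct next), but it should be said before the classification is invoked.
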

\begin{proof}
$T(m,A,x)$ is irreducible in $\kk[x]$ by Corollary~\ref{c:irroct}.  If $L/\kk$ is its splitting field, then $E\subset L$, $[L\!:\!\kk]\mid 16$ by Proposition~\ref{p:2pow}, and $L$ contains a $C_{4}$ extension of $\kk(sw)$ by Theorem \ref{t:irrcyc}, so $L\ne E$.  Thus $\abs{G}=16$, and $G\subset A_{8}$ by Corollary~\ref{c:nrml}(b).  This narrows the possibilities for $G$ to $_{8}T_{9}$, $_{8}T_{10}$, and $_{8}T_{11}$ .  Of these, only $_{8}T_{11}$ has a quaternion subgroup.

Let $H$ be the subgroup of $G$ fixing $\kk(swy)$.  Then $\abs{H}=8$.  Since the $q_{i}(x)$  remain irreducible in $E[x]$, it follows that $P_{s}(x)$, $P_{sw}(x)$, and $P_{w}(x)$ remain irreducible in $\kk(s,swy)[x]$, $\kk(sw,swy)[x]$, and $\kk(w,swy)[x]$, respectively.  By Lemma \ref{l:identqs} and Eqns.~\eqref{e:dp12} and \eqref{e:dp14}, their discriminants are not squares in the coefficient fields, and $L$ is normal of degree $4$ over each, so
$$G(L/\kk(s,swy))\cong G(L/\kk(sw,swy))\cong G(L/\kk(w,swy))\cong C_{4}.$$
Now the elements of order 4 in these groups can only map $q_{1}(x)$ to $q_{2}(x)$, $q_{3}(x)$, and $q_{4}(x)$, respectively.  Thus, $H$ has $6$ elements of order 4, so $H\cong Q_{8}$, proving (a) and (b).  By Corollary \ref{c:nrml},  $G(L/\kk(s))\cong D_{4}$.   By Proposition \ref{p:wnrml}, $G(L/\kk(w))\cong D_{4}$ also; and $G(L/\kk(y))\cong D_{4}$ (use the related octic), establishing (c).  The other three maximal subgroups of $_{8}T_{11}$ are $\cong C_{4}\times C_{2}$, so (d) follows by the Galois correspondence.
\end{proof}
\begin{rems}
The degree of $E/\kk$ can be determined by testing whether $s^2$, $w^2$, $y^2$, $s^2w^2$, $s^2y^2$, $w^2y^2$, and $s^2w^2y^2$ are squares in $\kk$.

When $[E\!:\!\kk]=8$, the factors $P_{sw}(x)$ and $\bar{P}_{sw}(x)$ of $T(m,A,x)$ define the fixed fields of a conjugacy class of order-2 subgroups of $G$.  The corresponding factors of the related octic  define the fixed fields of a \emph{different} conjugacy class of order-2 subgroups.

Taking $\sqrt{a}=sw$, $\sqrt{b}=sy$, $\sqrt{c}=s$, and $d=\text{disc}(q_{1}(x))$ in \cite{minsmith:cfields}, Appendix, we have $\text{disc}(q_{1}(x))\text{disc}(q_{2}(x))=\text{k}_{a}$, $\text{disc}(q_{1}(x))\text{disc}(q_{3}(x))=\text{k}_{c}$, and $\text{disc}(q_{1}(x))\text{disc}(q_{4}(x))=\text{k}_{ac}$.  Using Eqs.\  \eqref{e:dp12}, \eqref{e:dp14}, and Lemma \ref{l:identqs}, the obstruction $(a,b)(c,c)$ to Galois group $\mathbf{DC}\cong$ $_{8}T_{11}$ then evaluates to $1$.  
\end{rems}
\begin{cor}\label{c:galgp2}
Let $m,A\in\kk$, $m^{2}-4\notin\kk^{2}$, $[E\!:\!\kk]=4$ where $E=\kk(s,w,y)$, and  $L/\kk$ the splitting field of $T(m,A,x)$.  Then $[L\!:\!\kk]=8$, and 
$$G(L/\kk) \cong \begin{cases}
Q_{8},\;\textup{if}\; swy\in\kk,\\
D_{4}\cong D_{8}(8),\;\textup{if}\; w\in\kk\;\textup{or}\; y\in\kk,\text{ and}\\
C_{4}\times C_{2},\;\textup{if}\;sw\in\kk,\;sy\in\kk,\;\textup{or}\; wy\in\kk.
\end{cases}$$
\end{cor}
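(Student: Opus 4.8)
The plan is to show first that $[L\!:\!\kk]=8$, and then to pin down $G=G(L/\kk)$ among the three order-$8$ possibilities by counting how many quadratic subfields of $E$ are \emph{cyclically} embedded, i.e.\ how many subgroups $\cong C_4$ the group $G$ has.

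For the degree, I would use Corollary~\ref{c:nrml}(a): since $u=(m+s)/2$ we have $\kk(u)=\kk(s)$, so $L$ is the splitting field of $p(x)=q_1(x)q_2(x)$ over $\kk(s)$. The roots of $q_1(x)$ lie in $E(\sqrt{\operatorname{disc}q_1})$, and by Eq.~\eqref{e:dp12}, $\operatorname{disc}(q_1)\operatorname{disc}(q_2)=y^2(u-1)^2$, so $\sqrt{\operatorname{disc}q_2}=y(u-1)/\sqrt{\operatorname{disc}q_1}$ lies there as well; hence $L=E(\sqrt{\operatorname{disc}q_1})$ and $[L\!:\!\kk]\mid 8$. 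The biquadratic quotient $G(E/\kk)\cong (C_2)^2$ rules out $C_8$, and the $C_4$ subgroup produced below excludes $(C_2)^3$, so once $[L\!:\!\kk]=8$ we have $G\in\{Q_8,\,D_4,\,C_4\times C_2\}$. For each of these, $N:=G(L/E)$ has order $2$ and is the Frattini subgroup, hence lies in every index-$2$ subgroup; thus the three quadratic subfields of $E$ account for \emph{all} three order-$4$ subgroups of $G$, and $L/F$ is cyclic exactly when $G(L/F)\cong C_4$.

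The identification then rests on the count $c:=\#\{\,F\subset E:[F\!:\!\kk]=2,\ L/F\text{ cyclic}\,\}$, with $c=3,2,1$ forcing $Q_8,\ C_4\times C_2,\ D_4$ respectively. To compute $c$ I would invoke the standard criterion: writing $E=F(\sqrt{d_F})$, the degree-$4$ Galois extension $L=E(\sqrt{\operatorname{disc}q_1})$ is cyclic over $F$ iff $\mathcal N_{E/F}(\operatorname{disc}q_1)\in d_F\,F^{\times2}$, and is $(C_2)^2$ over $F$ iff $\mathcal N_{E/F}(\operatorname{disc}q_1)\in F^{\times2}$. Because $q_1(x)$ depends on $s$ and $w$ but not on $y$ (Eq.~\eqref{e:q1}), the nontrivial element of $G(E/F)$ carries $q_1$ to $q_2$, to $q_4$, or to $q_3$ according as it fixes $s$, fixes $w$, or negates both; so $\mathcal N_{E/F}(\operatorname{disc}q_1)=\operatorname{disc}(q_1)\operatorname{disc}(q_j)$ is evaluated by Eq.~\eqref{e:dp12}, Eq.~\eqref{e:dp14}, or Lemma~\ref{l:identqs}(b,c) respectively.

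Finally I would run the case analysis. Since $[E\!:\!\kk]=4$ and $s\notin\kk$, exactly one relation holds among $s^2,w^2,y^2$ in $\kk^{\times}/\kk^{\times2}$, namely one of $swy,\,w,\,y,\,sw,\,sy,\,wy\in\kk$. Feeding this single relation into the square-class tests above yields $c=3$ when $swy\in\kk$, $c=1$ when $w\in\kk$ or $y\in\kk$, and $c=2$ when $sw,\,sy,\,$ or $wy\in\kk$, which is the assertion. The symmetric pairs ($sy$ with $sw$, and $y$ with $w$) can be collapsed to a single computation via the related octic of Theorem~\ref{t:relocts}, which preserves $L$ while interchanging $w^2$ and $y^2$. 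The main obstacle is precisely the $D_4$-versus-$Q_8$ split: it turns on whether, over each $F$, the class $[y^2]$ matches $[w^2]$ or $[s^2]$ (giving $C_4$) or is trivial (giving $(C_2)^2$), so the delicate point is to track correctly which $q_j$ is the $F$-conjugate of $q_1$ and to verify that the lone relation among $s^2,w^2,y^2$ produces exactly the claimed number of square-class coincidences.
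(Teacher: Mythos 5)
Your argument is correct in substance but reaches the conclusion by a genuinely different route from the paper. The paper's own proof is a two-line appeal: at least one of the related octics $T(m,A,x)$, $T(m,-m-2-A,x)$ is irreducible by Corollary~\ref{c:irroct} (giving $[L\!:\!\kk]\ge 8$), and the identification of $G$ is then read off from Theorem~\ref{t:galgp1} by ``collapsing'' the appropriate quadratic subfield of $E$ to $\kk$, i.e.\ the degenerate Galois group is the maximal subgroup of $_{8}T_{11}$ whose fixed field has become trivial. You instead work directly over $\kk$ with the classical embedding criterion ($L=E(\sqrt{\textup{disc}\,q_{1}})$ is cyclic over a quadratic $F\subset E$ iff $\mathcal{N}_{E/F}(\textup{disc}\,q_{1})\in d_{F}F^{\times 2}$) and count the $C_{4}$'s among the three index-$2$ subgroups. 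The two arguments consume the same raw material --- Eqs.~\eqref{e:dp12}, \eqref{e:dp14} and Lemma~\ref{l:identqs}(b),(c) are precisely the three norms $\mathcal{N}_{E/F}(\textup{disc}\,q_{1})$ for the three possible conjugation patterns on $(s,w)$ --- but your version is self-contained and actually supplies the verification the paper waves at with ``the rest is clear.'' I checked the square-class bookkeeping in all six cases ($swy$; $w$; $y$; $sw$; $sy$; $wy$ in $\kk$) and the counts $c=3,1,1,2,2,2$ come out as you claim. What you gain is a uniform proof that never passes through the generic case $[E\!:\!\kk]=8$; what you lose is the paper's economy and its automatic reuse of Theorem~\ref{t:relocts}.

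Two loose ends. First, you never close the loop on $[L\!:\!\kk]=8$: you prove only $[L\!:\!\kk]\mid 8$ and then write ``once $[L\!:\!\kk]=8$.'' Either import the paper's observation that one of the related octics is irreducible, or note that your own criterion already does the job: if $\mathcal{N}_{E/F}(\textup{disc}\,q_{1})\in d_{F}F^{\times 2}\setminus F^{\times 2}$ for some $F$, then $\textup{disc}\,q_{1}\notin E^{\times 2}$ (a square in $E$ has square norm in $F$), so $L\neq E$; since you exhibit at least one cyclic $F$ in every case, $[L\!:\!\kk]=8$ follows. Second, the norm computations silently require $\textup{disc}(q_{1})\ne 0$, $Q_{1}\ne 0$, and $(m+A-2)s+(2-m)w\ne 0$; when $w^{2}=0$ (Washington's case, not excluded by the stated hypotheses) one has $T(m,A,x)=P_{sw}^{2}(x)$ and $[L\!:\!\kk]$ can drop to $4$, so the corollary implicitly assumes $w^{2}y^{2}\ne 0$. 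The paper is equally silent here, so this is not a defect of your approach specifically, but your write-up should flag where nondegeneracy is used.
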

\begin{proof}
At least one of the related octics is irreducible in $\kk[x]$ by Corollary~\ref{c:irroct}, and the rest is clear from Theorem~\ref{t:galgp1}.
\end{proof}
We formulate the condition $w\in\kk$ (disregarding $[E\!:\!\kk]$) by taking
\begin{equation}\label{e:dihedparms}
m\in\kk,\;d\in\kk^{\times},\;A=-m-2-d-(m-2)/d,\text{ and }w=(m-2)/d-d.
\end{equation}
Substituting into Eq.~\eqref{e:Pw}, we obtain
\begin{equation}\label{e:dihedquartic}
P_{w}(x)=x^{4} + (m\! +\! 2d\!+\! 2)x^{3} + ((d\!+\! 2)m\!+\! d^{2}\!+\!2d\!+\!2)x^2 + ((d\!+\!1)m\!+\! 2)x + 1.
\end{equation}
If $[E\!:\!\kk]=4$, the related octic has Galois group $D_{8}(8)$, and is a defining polynomial for the splitting field $L$, while $P_{w}(x)$ as in Eq.~\eqref{e:dihedquartic} and $\bar{P}_{w}(x)$ are quartics in $\kk[x]$ with $G\cong D_{4}$ which define the same conjugacy class of (non-normal) quartic intermediate fields of $L/\kk$.   Note that replacing $d$ by $(m-2)/d$ interchanges $P_{w}(x)$ and $\bar{P}_{w}(x)$.  If $d=(m-2)/d = t$, then $P_{w}(x)=\bar{P}_{w}(x)=P_{t}(x)$ as in \S\ref{ss:washquart}.  We give two other ``degenerate" cases of Eqs.~\eqref{e:dihedparms} and \eqref{e:dihedquartic}.  In both cases, the ``generic'' Galois group is $V_{4}$.

When $d=-2$, $A=-m-2-A$, so the related octics are identical.  (This happens for the pair $(m,A)=(2/3,-4/3)$ in Lemma \ref{l:murphy}).  In this case, $\text{disc}(P_{w}(x))\in\kk^{2}$.  We find $L=\kk(\sqrt{(m-2)^{2}-16},\sqrt{(m-4)^{2}-4})$.

If $d=-1$ then $A=-3$, and $\text{disc}(P_{w}(x))\in\kk^{2}$ when $m=4-t-t^{2}$, $t\in\kk$.  In this case, $L=\kk(\sqrt{t^{2}-4},\sqrt{(t+1)^{2}-4})$.

If $swy\in\kk$ or $wy\in\kk$ and $[E\!:\!\kk]=4$, the related octics are distinct defining polynomials for the splitting field.  We do not have a complete description of either $swy\in\kk$ or $wy\in\kk$.   With respect to $m$, Res$(w^{2},y^{2})=(A+4)^2(A^2+16)$.  Taking $A=-4$, we find:
\begin{prop}\label{p:c4c2andquatrn}
Let $t\in\kk$.
\begin{enumerate}\renewcommand{\labelenumi}{(\alph{enumi})}
\item If $t^2+4\notin\kk^2$, $t^2-4\notin\kk^2$, and $t^4-16\notin\kk^2$, then the related octics $T(2-t^{2},-4,x)$ and $T(2-t^{2}, t^{2},x)$ are irreducible in $\kk[x]$.  Both have the same splitting field $L$ with $G(L/\kk)\cong C_{4}\times C_{2}$.
\item If $t^2+4\notin\kk^2$, $t^2+8\notin\kk^2$, and $(t^2+6)^2-4\notin\kk^2$, then the related octics $T(-t^{2}-2,-4,x)$ and $T(-t^{2}-2, t^{2}+4,x)$ are irreducible in $\kk[x]$.  Both have the same splitting field $L$ with $G(L/\kk)\cong Q_{8}$.
\end{enumerate}
\end{prop}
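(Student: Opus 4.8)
The plan is to specialize the general machinery of Corollaries \ref{c:irroct} and \ref{c:galgp2} together with Theorem \ref{t:relocts} to the one-parameter families obtained by setting $A=-4$. With $A=-4$, the three quantities governing the splitting field collapse to
$$s^{2}=(m-2)(m+2),\quad w^{2}=(m-2)(m-6),\quad y^{2}=-4(m-6),$$
so that $(wy)^{2}=[2(m-6)]^{2}(2-m)$ and $(swy)^{2}=[2(m-2)(m-6)]^{2}\bigl(-(m+2)\bigr)$. The point is that $wy\in\kk$ exactly when $2-m\in\kk^{2}$, and $swy\in\kk$ exactly when $-(m+2)\in\kk^{2}$; the substitutions $m=2-t^{2}$ and $m=-t^{2}-2$ are chosen precisely so that $2-m=t^{2}$ and $-(m+2)=t^{2}$ respectively, placing us in the $C_{4}\times C_{2}$ and $Q_{8}$ branches of Corollary \ref{c:galgp2}. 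Everything else is bookkeeping with the irreducibility criterion.

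For part (a), setting $m=2-t^{2}$ gives $(wy)^{2}=[2(m-6)t]^{2}$, hence $wy\in\kk$. A short factorization then yields the three test quantities of Corollary \ref{c:irroct} as $s^{2}=t^{2}(t^{2}-4)$, $w^{2}=t^{2}(t^{2}+4)$, and $s^{2}w^{2}=t^{4}(t^{4}-16)$, so the hypotheses $t^{2}-4,\ t^{2}+4,\ t^{4}-16\notin\kk^{2}$ say exactly that each of these is a non-square (note $t^{2}+4\notin\kk^{2}$ already forces $t\neq 0$, since $4=2^{2}$). Thus $T(2-t^{2},-4,x)$ is irreducible by Corollary \ref{c:irroct}. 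Moreover $wy\in\kk$ gives $y\in\kk(s,w)$, so $E=\kk(s,w)$, which has degree $4$ over $\kk$ by the same non-squareness; hence $[E\!:\!\kk]=4$ and Corollary \ref{c:galgp2} yields $G\cong C_{4}\times C_{2}$. Since $-m-2-A=t^{2}$, the related octic is $T(2-t^{2},t^{2},x)$, and Theorem \ref{t:relocts} (applicable because $s^{2}\notin\kk^{2}$) shows it shares the splitting field $L$. Interchanging the roles of $w^{2}$ and $y^{2}$, its own test quantities are $t^{2}(t^{2}-4)$, $4(t^{2}+4)$, and $4t^{2}(t^{4}-16)$, non-squares under the same three hypotheses, so it too is irreducible.

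Part (b) runs identically with $m=-t^{2}-2$, for which $(swy)^{2}=[2t(t^{2}+4)(t^{2}+8)]^{2}$ gives $swy\in\kk$, the $Q_{8}$ trigger. Here the test quantities become $s^{2}=t^{2}(t^{2}+4)$, $w^{2}=(t^{2}+4)(t^{2}+8)=(t^{2}+6)^{2}-4$, and $s^{2}w^{2}=\bigl(t(t^{2}+4)\bigr)^{2}(t^{2}+8)$, so the hypotheses $t^{2}+4,\ (t^{2}+6)^{2}-4,\ t^{2}+8\notin\kk^{2}$ are precisely what makes them non-squares. Irreducibility follows from Corollary \ref{c:irroct}, the equality $[E\!:\!\kk]=4$ from $swy\in\kk$ (again $y\in\kk(s,w)$), and $G\cong Q_{8}$ from Corollary \ref{c:galgp2}. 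The related octic is $T(-t^{2}-2,t^{2}+4,x)$; its swapped test quantities $t^{2}(t^{2}+4)$, $4(t^{2}+8)$, and $(2t)^{2}\bigl((t^{2}+6)^{2}-4\bigr)$ are non-squares under the same hypotheses, so it is irreducible with the common splitting field $L$ by Theorem \ref{t:relocts}.

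The only genuinely non-mechanical steps are the two square-root identities $(wy)^{2}=[2(m-6)t]^{2}$ and $(swy)^{2}=[2t(t^{2}+4)(t^{2}+8)]^{2}$, which are exactly what pin each family onto the correct branch of Corollary \ref{c:galgp2}; the remaining factorizations are routine and were arranged so that the three non-squareness hypotheses match the three conditions of the irreducibility criterion one-for-one. The main thing to check carefully is that these same three hypotheses \emph{simultaneously} secure irreducibility of the related octic (via the $w^{2}\leftrightarrow y^{2}$ swap) and the equality $[E\!:\!\kk]=4$, so that Corollary \ref{c:galgp2} applies to both members of each pair.
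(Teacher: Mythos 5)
Your proof is correct and follows exactly the route the paper intends: its entire proof is the one-line instruction to check $[E\!:\!\kk]=4$ and that $wy\in\kk$ in (a) and $swy\in\kk$ in (b), which you carry out in full via Corollaries \ref{c:irroct} and \ref{c:galgp2} and Theorem \ref{t:relocts}. Your explicit factorizations of $s^2$, $w^2$, $s^2w^2$, $(wy)^2$, and $(swy)^2$, and the check that the related octic's test quantities are non-squares under the same three hypotheses, are exactly the omitted verification.
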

\begin{proof}
Check that $[E\!:\!\kk]=4$; then $wy\in\kk$ in (a), and  $swy\in\kk$ in  (b).
\end{proof}
\begin{rems}
If $(m,A)=(-t^2+2- 8i,4i)$ (or $(-t^2+2+ 8i,-4i)$), $t\in\Z[i]$, then $T(m,A,x)$ and the related octic are ``typically'' distinct defining polynomials for a $C_{4}\times C_{2}$ extension of $\Q(i)$ ($wy\in\kk$).

With respect to $A$, $\text{Res}(w^{2},y^{2})=(m-2)^{2}(m-6)^{2}$.  Taking $m=6$, we find that if $A=8a_{n}-4$ where  $a_{n}+b_{n}\sqrt{2}=(3+2\sqrt{2})^{n}$ and $n\in\Z^{+}$, then $T(6,A,x)$ and $T(6, -8-A,x)$ are defining polynomials for a quaternion field in a family reminiscent of the cyclic octic fields in \cite{shen:uniclassoct}.
\end{rems}
If $sw\in\kk$ and $[\kk(s,y)\!:\!\kk]=4$, $P_{sw}(x)$ and $\bar{P}_{sw}(x)$ are Murphy's twins in $\kk[x]$, and define distinct cyclic quartic extensions of $\kk$.  The related octic is then a defining polynomial for their join.  We deal with Murphy's twins for $\kk=\Q$ and $m,A\in\Z$,  in \S\ref{ss:murphtwins}.
%
%subsection
\subsection{Washington's cyclic quartic fields}\label{ss:washquart}
 We close this section with a discussion of the ``degenerate'' case $w^{2}=0$.  It produces the polynomials $P(x)$ in Proposition \ref{p:known}(c).  These are alternate defining polynomials for the cyclic quartic fields in \cite{wash:quartics} when $\kk=\mathbb{Q}$ and $t\in\mathbb{Z}-\{0,-2\}$.
 
Very early on, Phil Carmody drew the author's attention to examples of $T(m,A,x)$, $m,A\in\Z$, $m\ne\pm 2$, with repeated factors in $\Z[x]$.  It was this observation that originally led us to consider the case $w^{2}=0$.  As when $s^{2}=0$, $T(m,A,x)$ is again the square of a quartic in $\kk[x]$.  

If $m=t^2+2$, then $4(m-2) = 4t^{2}$, and $w=0$ when $A=-t^{2}-4\pm 2t$.  To choose the $\pm$ sign, we substitute into the formulas in Lemma \ref{l:genrecip2}(d); we find that $Q_{1}=Q_{2}=\pm t$, and 
\begin{align}
&\label{e:dwz}\text{disc}(q_{1}(x))=\text{disc}(q_{2}(x))=t(t\mp 2)(u+1), \text{ whence}\\
&\label{e:wzdiscnorm}\text{disc}(q_{1}(x))\text{disc}(q_{3}(x))=t^{2}(t \mp 2)^2(t^{2}+4).
\end{align}
Taking $A=-t^{2}-4-2t$ gives $\text{disc}(q_{1}(x))\text{disc}(q_{3}(x))=t^{2}(t+2)^{2}(t^{2}+4)$, the same form as the product of the discriminants of the quadratic factors of the quartics in  \cite{wash:quartics}.  Then $T(t^{2}+2,-t^{2}-2t-4,x)=P^{2}_{t}(x)$, where
\begin{equation}\label{e:hsquart}
P_{t}(x)=x^4 + (t^2 + 2t + 4)x^3 + (t^3 + 3t^2 + 4t + 6)x^2 + (t^3 + t^2 + 2t + 4)x + 1,
\end{equation}
the $P(x)$ in Proposition \ref{p:known}(c).  By Theorem \ref{t:irrcyc}, $P_{t}(x)$ is irreducible in $\kk[x]$ when $t\in\kk$, $t^{2}+4\notin\kk^{2}$, and $t\ne -2$.

Next, we relate the $P_{t}(x)$ to the cyclic quartics
\begin{equation}\label{e:washquart}f_{t}(x) = x^{4}-t^{2}x^{3}-(t^{3}+2t^{2}+4t+2)x^{2}-t^{2}x+1\end{equation}
in \cite{wash:quartics}. Clearly $f_{t}(x)$ is a reciprocal polynomial; if $f_{t}(\rho)=0$ then the element of order 2 in the Galois group maps $\rho$ to $1/\rho$.  Similarly, the element $\sigma^{2}$ of order 2 in the Galois group of $P_{t}(x)$ maps a zero $r$ to $u/r$.  With $m=t^{2}+2$, we have $u^{2}-(t^2+2)u+1= 0$, and $u = v^{2}$ where $v^{2}-tv-1= 0$.  Then $\sigma^{2}(r/v)=1/(r/v)$, so $r/v$ is a zero of a reciprocal polynomial.  By formulating $u$ and $\sigma$ as rational expressions$\pmod{P_{t}(x)}$, and using Pari-GP to bludgeon the algebra into submission, we find that
\begin{equation}\label{e:ptu}
u\equiv -x^{3} - (t^{2} + 2t +3)x^{2} - (t^{3} + 2t^{2} + 3t + 3)x - 1\pmod{P_{t}(x)}.
\end{equation}
Taking $v=(u-1)/t$, we find that $r/v$ is in fact a zero of $f_{t}(x)$.
\begin{prop}\label{p:ptwash}If $t\in\kk- \{0,-2\}$, $P_{t}(x)$ and $f_{t}(x)$ define the same extension of $\kk$.
\end{prop}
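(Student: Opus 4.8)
*If $t\in\kk-\{0,-2\}$, then $P_t(x)$ and $f_t(x)$ define the same extension of $\kk$.*

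The plan is to exhibit an explicit $\kk$-rational bijection between a zero $r$ of $P_t(x)$ and a zero $\rho$ of $f_t(x)$, so that $\kk(r)=\kk(\rho)$ as subfields of a common splitting field. The construction has already been set up in the preceding paragraphs: with $m=t^2+2$ we have $u^2-(t^2+2)u+1=0$, and writing $u=v^2$ with $v^2-tv-1=0$, the element $\sigma^2$ of order $2$ sends $r\mapsto u/r$, whence $\sigma^2(r/v)=v^2/(r/v)\cdot v^{-2}\cdot\ldots$ — more precisely $\sigma^2(r/v)=(u/r)/v=v^2/(rv)=v/r=1/(r/v)$, using $v^2=u$ and that $\sigma$ fixes $\kk\ni t$ and hence $v$. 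Thus $r/v$ is a zero of a \emph{reciprocal} quartic, exactly the shape of $f_t(x)$.

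First I would make the element $v$ explicit as a polynomial in $r$ modulo $P_t(x)$: from the stated congruence \eqref{e:ptu} expressing $u$ as an explicit cubic in $x$ modulo $P_t(x)$, one sets $v=(u-1)/t$ (legitimate since $t\neq 0$), giving $v\in\kk(r)$. Then I would verify by direct substitution that $\rho:=r/v$ satisfies $f_t(\rho)=0$; this is the computation already announced in the text (``we find that $r/v$ is in fact a zero of $f_t(x)$''), and it reduces to a polynomial identity modulo $P_t(x)$ checkable in Pari-GP. Since $v\in\kk(r)$ and $v\neq 0$ (as $u\neq 1$, because $u=1$ would force $m=2$, i.e.\ $t^2+2=2$, $t=0$, excluded), we get $\rho=r/v\in\kk(r)$, so $\kk(\rho)\subseteq\kk(r)$.

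For the reverse inclusion I would argue by degrees. Both $P_t(x)$ and $f_t(x)$ are irreducible quartics in $\kk[x]$ under the standing hypotheses: $f_t(x)$ is Washington's cyclic quartic (irreducible for $t\neq 0,-2$), and $P_t(x)$ is irreducible by Theorem \ref{t:irrcyc} whenever $t^2+4\notin\kk^2$ and $t\neq-2$. Hence $[\kk(r):\kk]=[\kk(\rho):\kk]=4$, and the inclusion $\kk(\rho)\subseteq\kk(r)$ of fields of equal finite degree over $\kk$ forces equality $\kk(r)=\kk(\rho)$. (If one prefers to avoid assuming irreducibility of $f_t$, one can instead invert the relation: solving $\rho=r/v$ together with $v=(u-1)/t$ and \eqref{e:ptu} expresses $r$ as a $\kk$-rational function of $\rho$, giving $\kk(r)\subseteq\kk(\rho)$ directly.)

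The main obstacle is purely computational rather than conceptual: producing the explicit congruence \eqref{e:ptu} for $u$ modulo $P_t(x)$ and then checking $f_t(r/v)\equiv 0\pmod{P_t(x)}$ are both genuinely heavy polynomial manipulations in $\kk[t][x]$, which is precisely why the authors invoke symbolic algebra (``using Pari-GP to bludgeon the algebra into submission''). Conceptually the only subtlety is verifying $v\neq 0$ and that the map $r\mapsto r/v$ is well-defined and invertible over $\kk$, but these follow immediately from $t\neq 0$ and $u\neq 1$; the degenerate values $t\in\{0,-2\}$ are exactly the ones excluded in the hypothesis.
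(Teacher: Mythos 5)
Your construction of the forward map is exactly the paper's: use \eqref{e:ptu} and $v=(u-1)/t$ to realize $\rho=r/v$ as an explicit element of $\kk(r)$ that is a zero of $f_t(x)$, so $\kk(\rho)\subseteq\kk(r)$. Where you diverge is the reverse inclusion. Your primary argument is a degree count, but it asserts that both quartics are irreducible ``under the standing hypotheses,'' which is not what the statement assumes: the only hypothesis is $t\in\kk-\{0,-2\}$, and irreducibility of $P_t(x)$ additionally requires $t^{2}+4\notin\kk^{2}$ (as you yourself note when citing Theorem \ref{t:irrcyc}). When $t^{2}+4\in\kk^{2}$ both quartics are reducible and the degree argument collapses. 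The paper avoids this by running the explicit correspondence in \emph{both} directions: it reformulates $v$ modulo $f_t(x)$ and exhibits $xv\equiv(-x^{2}+(t^{2}+t)x-1)/(t+2)\pmod{f_t(x)}$, so that $P_t(xv)\equiv 0\pmod{f_t(x)}$ and $\kk(r)\subseteq\kk(\rho)$ with no irreducibility needed; the two excluded values $t=0$ and $t=-2$ are precisely the denominators of the two transformations. Your parenthetical fallback is essentially this, but note it is not automatic from ``solving $\rho=r/v$'': the expression for $v$ in \eqref{e:ptu} is a polynomial in $r$ modulo $P_t(x)$, so to invert you must separately recompute $v$ (or $xv$) as a rational expression modulo $f_t(x)$, which is an independent computation, not a formal consequence of the forward one. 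With that fallback promoted to the main argument, your proof coincides with the paper's; as written, the main line of argument proves the proposition only under the extra hypothesis $t^{2}+4\notin\kk^{2}$.
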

\begin{proof}
Eq.\ \eqref{e:ptu} and $v=(u-1)/t$ give $f_{t}(x/v)\equiv 0\pmod{P_{t}(x)}$, where
$$x/v\equiv (x^3 + (t^2 + t + 3)x^2 + (t^2 + t + 3)x + 1)/t^2\pmod{P_{t}(x)},\text{ if }t\ne 0.$$
Reformulating $v\pmod{f_{t}(x)}$, we have $P_{t}(xv)\equiv 0\pmod{f_{t}(x)}$, where
$$xv\equiv (-x^2 + (t^2 + t)x - 1)/(t + 2)\pmod{f_{t}(x)},\text{ if }t\ne -2.$$
Both transformations are defined when $t\in\kk-\{0,-2\}$, and the result follows.
\end{proof}
\begin{rems}
The related octic $T(t^{2}+2,2t,x)$ has the repeated factor $(x^{2}-tx-1)^{2}$.  The cofactor is a quartic which defines the same extension of $\kk$ as $P_{t}(x)$.
\end{rems}
%
%new section
%
\section{ $T(m,A,x)$ and number field extensions}\label{s:tmaxsimpnfext}
We now apply the preceding results when $\kk$ is a number field and \\$m,A\in\mathcal{O}_{\kk}$, when the zeroes of $T(m,A,x)$ are units.  We continue to assume that $u$ is defined via Eq.~\eqref{e:srat2}.

If $m^{2}-4\notin\kk^{2}$ and $w^{2}y^{2}\ne 0$, the zeroes of $T(m,A,x)$ are all simple by Proposition \ref{p:simple}.  We have the following result:
\begin{thm}[Real and complex zeroes]\label{t:signature}Let $\kk$ be a number field, $\kk\subset\R$, $m,A\in\mathcal{O}_{\kk}$, $m^{2}-4\notin\kk^{2}$.  Then the number of real zeroes of $T(m,A,x)$ is
\begin{enumerate}\renewcommand{\labelenumi}{(\alph{enumi})}
\item None, if $s^{2}<0$ or $w^{2}<0$;
\item Four, if $s^{2}>0$, $w^{2}> 0$, and $y^{2}<0$;
\item Eight, if $m<-2$;
\item Eight , if $m>2$, $w^{2}>0$, $y^{2}>0$, and
$$\abs{(A+4)^{2}+mA(A+4)+A^{2}}>16;\text{ and}$$
\item None, if $m>2$, $w^{2}>0$, $y^{2}>0$, and
$$\abs{(A+4)^{2}+mA(A+4)+A^{2}}<16,$$
i.e. if $2<m<6$ and $-m-2+2\sqrt{m-2}<A<-2\sqrt{m-2}.$
\end{enumerate}
\end{thm}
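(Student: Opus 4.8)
The plan is to reduce the count to the signs of the discriminants $D_i=\operatorname{disc}(q_i(x))$ of the four quadratic factors of Lemma~\ref{l:genrecip2}(c), together with the action of complex conjugation on the eight roots. First I would record the elementary facts: since $s^2,w^2,y^2\in\kk\subset\R$, each of $s,w,y$ is either real or purely imaginary; $u=(m+s)/2$ satisfies $u+u^{-1}=m$ and $uu^{-1}=1$, so $u$ is real exactly when $s$ is; and $p(x)=q_1q_2$, $\bar p(x)=q_3q_4$ always lie in $\kk(s)[x]$. I would also note that $m<2$ forces $w^2>0$ and $y^2>0$ (because then $-4(m-2)>0$), and that the zeroes of $T(m,A,x)$ are simple throughout (b)--(e) by Proposition~\ref{p:simple}. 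The relations I will lean on are $D_1D_2=y^2(u-1)^2$ from \eqref{e:dp12}, the fact (Lemma~\ref{l:identqs}(b),(c), and its $w\mapsto-w$ analogue) that $D_1D_3$ and $D_2D_4$ are perfect squares in $\kk(s,w)$, and $\operatorname{Res}(q_1,q_2)=w^2u$ from \eqref{e:res12}.

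For (a) I would argue by complex conjugation. If $s^2<0$ then conjugation sends $s\mapsto-s$ and fixes the real $w,y$, hence interchanges $p$ and $\bar p$; a real root of $T$ would be a common root of $p$ and $\bar p$, impossible since they are coprime (the zeroes are simple, as $w^2,y^2>0$ here). If instead $w^2<0$ (forcing $s^2>0$, so $s,u$ are real and $p,\bar p\in\kk(s)[x]$ are genuine real quartics), conjugation fixes $s$ and sends $w\mapsto-w$, interchanging $q_1\leftrightarrow q_2$ and $q_3\leftrightarrow q_4$; a real root of $p$ would be a common root of $q_1,q_2$, but $\operatorname{Res}(q_1,q_2)=w^2u\ne0$, and similarly for $\bar p$. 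Either way $T$ has no real root. The single degenerate pair $(m,A)=(2/3,-4/3)$ permitted by Proposition~\ref{p:simple} lies under $s^2<0$, and I would dispatch it by direct inspection (its repeated root is $(-1+\sqrt2\,i)/3$, which is not real).

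Cases (c) and (b) are then short. When $m<-2$ both $u$ and $u^{-1}$ are negative, so the constant terms of all four $q_i$ are negative and every $D_i>0$; equivalently $x\mapsto x+u/x$ is surjective onto $\R$ when $u<0$, so each real quadratic factor splits, giving eight real roots. When $s^2>0,\ w^2>0,\ y^2<0$ the $q_i$ are real quadratics with $D_1D_2=y^2(u-1)^2<0$, so $D_1,D_2$ have opposite signs; combined with $D_1D_3>0$ and $D_2D_4>0$ (perfect squares, nonzero by simplicity), this forces exactly two of the $D_i$ positive and two negative, hence four real roots.

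The substantive case is (d)--(e), where $m>2$ (so $u,u^{-1}>0$) and $w^2,y^2>0$. Here $D_1D_2=y^2(u-1)^2>0$ while $D_1D_3,D_2D_4>0$, so all four $D_i$ share one nonzero sign, and the count is $8$ if that sign is positive, $0$ if negative. To pin it down I would write the factors as $x^2-\alpha x+u$, $x^2-\beta x+u$, $x^2-\bar\alpha x+u^{-1}$, $x^2-\bar\beta x+u^{-1}$ with $\alpha,\beta$ (resp. $\bar\alpha,\bar\beta$) the roots of $F$ (resp. its conjugate), observe that the displayed quantity factors as $(A+4)^2+mA(A+4)+A^2=(Au+A+4)(Au^{-1}+A+4)=(\alpha\beta)(\bar\alpha\bar\beta)$, and that $16=16\,uu^{-1}$. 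Multiplying the four inequalities $\alpha^2,\beta^2\gtrless 4u$ and $\bar\alpha^2,\bar\beta^2\gtrless 4u^{-1}$ (all legitimate because the signs agree) gives $(\alpha\beta\bar\alpha\bar\beta)^2\gtrless 256(uu^{-1})^2$, that is $\lvert(A+4)^2+mA(A+4)+A^2\rvert\gtrless 16$. Thus all eight roots are real exactly when this absolute value exceeds $16$ (case (d)) and none are real when it is smaller (case (e)). Finally I would translate $\lvert\,\cdot\,\rvert<16$ under $m>2,\ w^2>0,\ y^2>0$ into the explicit band $2<m<6,\ -m-2+2\sqrt{m-2}<A<-2\sqrt{m-2}$ by solving the two quadratic inequalities in $A$ (the lower bound $>-16$ being automatic for $m<6$, while $w^2,y^2>0$ supply the two endpoints). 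The main obstacle is precisely this sign determination in (d)--(e); the factorization of the displayed quantity as $(Au+A+4)(Au^{-1}+A+4)$, paired with $16=16\,uu^{-1}$ and the all-equal-signs reduction, is what makes it tractable.
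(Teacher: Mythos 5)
Your proof is correct and, for parts (b)--(e), follows essentially the same route as the paper: reduce everything to the common sign pattern of the four discriminants $\operatorname{disc}(q_i)$ via Eq.~\eqref{e:dp12}, Lemma~\ref{l:identqs}, and Lemma~\ref{l:genrecip2}(d), and in (d)--(e) decide the sign by comparing $\lvert(A+4+Au)(A+4+Au^{-1})\rvert=\lvert(A+4)^2+mA(A+4)+A^2\rvert$ with $4u\cdot 4u^{-1}=16$. For (a) the paper has a one-line shortcut --- $\kk(s,w)\subset\kk(r)$ by Lemma~\ref{l:contsw}, so a real zero would force $s,w\in\R$ --- whereas your conjugation-plus-resultant argument reaches the same conclusion with slightly more work and needs the separate check at $(m,A)=(2/3,-4/3)$ that Lemma~\ref{l:contsw} already covers.
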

\begin{proof}
For (a), $\kk(s,w)\subset\kk(r)$ by Lemma \ref{l:contsw}, so $\kk(r)\nsubseteq\R$ for each zero $r$.

For (b), $\kk(s,w)\subset\R$, so $q_{i}(x)\in\R[x]$.  Now, $\text{disc}(q_{3}(x))$ has the same sign as $\text{disc}(q_{1}(x))$ by Lemma \ref{l:identqs}(b), while $\text{disc}(q_{2}(x))$ and $\text{disc}(q_{4}(x))$ have the opposite sign by Eqs.\ \eqref{e:dp12} and \eqref{e:dp14}.

For (c), if $m<-2$ then $E=\kk(s,w,y)\subset\R$, so the $\text{disc}(q_{i}(x))$ all have the same sign.  Here $u<0$, so by Lemma \ref{l:genrecip2}(d), $\text{disc}(q_{1}(x))>0$.

If $m>2$, $w^{2}>0$, and $y^{2}>0$, then again the $\text{disc}(q_{i}(x))$ all have the same sign, but here $u>0$.  Again using Lemma~\ref{l:genrecip2}(d), $\text{disc}(q_{1}(x))$ and $\text{disc}(q_{2}(x))$ are both $>0$ or both $<0$, according as whether  $\abs{\alpha\beta}>4u$ or $\abs{\alpha\beta}<4u$. By Lemma~\ref{l:genrecip2}(a), $\alpha\beta=A+4+Au$.  Multiplying by the corresponding conditions for $q_{3}(x)$ and $q_{4}(x)$ gives (d) and (e). 
\end{proof}
\begin{rems}
Suppose $m,A\in\R$.  If $m<2$ (in particular, when $s^{2}<0$), then $w^{2}>0$ and $y^{2}>0$. Since Max$(\abs{A},\abs{-m-2-A})\ge\abs{(m+2)/2}$, we have Max$(w^{2},y^{2})\ge (m-6)^{2}/4\ge 0$, so $w^{2}$ and $y^{2}$ cannot both be negative.

The cases $w^{2}<0$ and $y^{2}<0$ correspond to related octics.  Thus, if $m>2$ and $A$ is such that $w^{2}y^{2}<0$, one of the two related octics has signature $(0,4)$ and the other has signature $(4,2)$.

Suppose $T(m,A,x)$ is irreducible with $G\cong$ $_{8}T_{11}$.  Then the fixed field of complex conjugation is non-normal over $\kk$ in cases (a) and (b).  In case (e), the fixed field of complex conjugation is the elementary Abelian extension $E/\kk$.
\end{rems}
We use the Washington's cyclic quartic case $T(t^{2}+2,-t^{2}-2t-4,x)=P_{t}^{2}(x)$ of \S\ref{ss:washquart} to deal with repeated zeroes.
\begin{thm}\label{t:sig2}
Let $\kk\subset\R$, $t\in\mathcal{O}_{\kk}$, and $T(t^{2}+2,-t^{2}-2t-4,x)=P_{t}^{2}(x)$ as in \textup{\S\ref{ss:washquart}}.  Then $P_{t}(x)$ has $4$ real zeroes if $\abs{t+1}>1$, and none if $\abs{t+1}<1$.
\end{thm}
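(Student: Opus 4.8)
The plan is to exploit the fact that, in this degenerate case, $P_t(x)$ factors over $\R$ into two honest real quadratics whose discriminants are already available in closed form from \S\ref{ss:washquart}. First I would record that with $m=t^{2}+2$ and $w=0$ we have $P_t(x)=P_{sw}(x)=q_1(x)q_3(x)$ by \eqref{e:Psw}, and that $s^{2}=m^{2}-4=t^{2}(t^{2}+4)\ge 0$ for real $t$, so $s\in\R$. Hence $q_1(x),q_3(x)\in\R[x]$, and counting the real zeroes of $P_t(x)$ reduces to counting the real roots of the two real quadratics $q_1$ and $q_3$, i.e.\ to reading off the signs of $\mathrm{disc}(q_1)$ and $\mathrm{disc}(q_3)$.

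The second step establishes the ``four or none'' dichotomy. By \eqref{e:wzdiscnorm} the product $\mathrm{disc}(q_1)\,\mathrm{disc}(q_3)=t^{2}(t+2)^{2}(t^{2}+4)$ is a nonnegative real number, so $\mathrm{disc}(q_1)$ and $\mathrm{disc}(q_3)$ share a common sign (and vanish simultaneously only at the excluded boundary). Thus either both quadratics contribute two real roots, giving four in all, or both contribute none.

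It then remains to pin down that common sign, which is the only place any real work occurs. By \eqref{e:dwz}, for $A=-t^{2}-2t-4$ one has $\mathrm{disc}(q_1)=t(t+2)(u+1)$. Since $u$ satisfies \eqref{e:polyu}, i.e.\ $u^{2}-(t^{2}+2)u+1=0$, its two roots have product $1$ and sum $t^{2}+2>0$ and are therefore both positive; hence $u>0$ and $u+1>0$ irrespective of the choice of $s$. Consequently $\mathrm{disc}(q_1)$ has the sign of $t(t+2)=(t+1)^{2}-1$, which is positive exactly when $\abs{t+1}>1$ and negative exactly when $\abs{t+1}<1$ (the vanishing cases $t\in\{0,-2\}$ being precisely $\abs{t+1}=1$, which the statement excludes). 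Combining this with the dichotomy of the previous step yields four real zeroes when $\abs{t+1}>1$ and none when $\abs{t+1}<1$.

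I expect no serious obstacle here: the argument is a short sign-chase once the explicit discriminants of \S\ref{ss:washquart} are in hand. The only points demanding care are confirming that both quadratic factors genuinely have real coefficients (which rests on $s^{2}\ge 0$), keeping the sign convention of the $\mp$ in \eqref{e:dwz} matched to the choice $A=-t^{2}-2t-4$, and observing that the positivity of $u+1$ follows from $m>0$ without having to fix which root $u$ denotes.
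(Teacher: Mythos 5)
Your proposal is correct and follows essentially the same route as the paper: both reduce to the sign of $\textup{disc}(q_{1}(x))=t(t+2)(u+1)$ from Eq.~\eqref{e:dwz} and conclude via $t(t+2)=(t+1)^{2}-1$. The only cosmetic difference is how positivity of $u+1$ is obtained --- the paper uses $u=v^{2}$ with $v^{2}-tv-1=0$, while you argue from the sum and product of the roots of $u^{2}-(t^{2}+2)u+1=0$; both are fine.
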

\begin{proof}
In \S\ref{ss:washquart}, the choice $A=-t^{2}-2t-4$ makes $\text{disc}(q_{1}(x))=t(t+2)(u+1)$ in Eq.~\eqref{e:dwz}.  Also in \S\ref{ss:washquart}, $u=v^{2}$ where $v^{2}-tv-1=0$.  Thus, $u+1\ge 1$, so $\text{disc}(q_{1}(x))$ has the same sign as $t(t+2)=(t+1)^{2}-1$.
\end{proof}
The following result depends only on the fact that the zeroes of $T(m,A,x)$ are units when $m$ and $A$ are algebraic integers.  We let $\sim$ indicate associates.
\begin{prop}[Exceptional sequences]\label{p:unitassoc}
Let $\kk$ be a number field, \\$m,A\in\mathcal{O}_{\kk}$, $m^{2}-4\notin\kk^{2}$, and $T(m,A,r)=0$.  Then
\begin{enumerate}\renewcommand{\labelenumi}{(\alph{enumi})}
\item $r +1\sim r+u$  in $\kk(r)$.
\item If $m-2\in\mathcal{O}_{\kk}^{\times}$, then $r, r+1, r+ u$ is an exceptional sequence of three units.
\end{enumerate}
\end{prop}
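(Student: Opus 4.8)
The plan is to obtain (a) as a formal associate relation read directly off the map $\sigma$, and then to deduce (b) from (a) together with two short unit computations. For (a), I would rewrite the first expression in \eqref{e:C4}, namely $\sigma(r)=(-r-1)/(r+u)$, as the identity $r+1=-\sigma(r)\,(r+u)$. Since $\sigma(r)$ is a zero of $\bar p(x)$ (as noted before Theorem~\ref{t:opensesame}), and $\bar p(x)$ is monic over $\mathcal{O}_{\kk(u)}$ with constant term $u^{-2}$, which is a unit because $u$ is a unit (a root of $x^{2}-mx+1$), the element $\sigma(r)$ is itself a unit of $\mathcal{O}_{\kk(r)}$ (note $u\in\kk(s)\subset\kk(r)$ by Lemma~\ref{l:contsw}). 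Hence $r+1$ and $r+u$ differ multiplicatively by the unit $-\sigma(r)$, giving $r+1\sim r+u$ in $\kk(r)$.

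For (b), I would first record that $r$ is a unit: $T(m,A,x)$ is monic over $\mathcal{O}_{\kk}$ with constant term $1$ (the remark after \eqref{e:tmax}), so the product of its zeroes is a unit and each zero is therefore a unit. By (a) it then suffices to prove that one of $r+1$, $r+u$ is a unit and that the three pairwise differences $1$, $u$, $u-1$ are units. The difference $1$ is trivial and $u$ is a unit; for $u-1$, its minimal polynomial over $\kk$ is $x^{2}+(2-m)x+(2-m)$, so $\mathcal{N}_{\kk(u)/\kk}(u-1)=2-m$, which is a unit because $m-2\in\mathcal{O}_{\kk}^{\times}$ by hypothesis.

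The crux is showing $r+1$ is a unit, which I would do by a global product rather than element by element. Evaluating at $x=-1$ gives $T(m,A,-1)=\prod_{i}(r_{i}+1)$ over the zeroes $r_{i}$. Writing $p(x)=q_{1}(x)q_{2}(x)$ and using $q_{1}(-1)=(m+2+A+w)/2$ and $q_{2}(-1)=(m+2+A-w)/2$ from Lemma~\ref{l:genrecip2}(d) (equivalently \eqref{e:q1}--\eqref{e:q2}), together with $w^{2}=(m+2+A)^{2}-4(m-2)$ from \eqref{e:newvars}, I get $p(-1)=((m+2+A)^{2}-w^{2})/4=m-2$, and by conjugation in $\kk(u)/\kk$ also $\bar p(-1)=m-2$, so $T(m,A,-1)=(m-2)^{2}$. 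As $m-2$ is a unit, this product of the algebraic integers $r_{i}+1$ is a unit, forcing each $r_{i}+1$ to be a unit; in particular $r+1$, and hence $r+u$ by (a), is a unit.

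The pairwise-difference conditions are essentially automatic, so the real content, and the main obstacle, is establishing that the elements $r+1$ and $r+u$ are themselves units. That reduces to the product identity $T(m,A,-1)=(m-2)^{2}$, which is exactly the point at which the hypothesis $m-2\in\mathcal{O}_{\kk}^{\times}$ is genuinely needed; without it one still has an exceptional sequence only in the weaker (difference-only) sense, since $u-1$ and $r+1$ can fail to be units.
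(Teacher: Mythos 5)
Your proposal is correct and follows essentially the same route as the paper: part (a) comes from reading $r+1=-\sigma(r)(r+u)$ off \eqref{e:C4} and noting $\sigma(r)$ is a unit (the paper observes it is a zero of $T(m,A,x)$; your detour through $\bar p$ is equivalent), and part (b) rests on the same two facts the paper cites, namely $p(-1)=\bar p(-1)=m-2$ (your $T(m,A,-1)=(m-2)^2$) and the unit-ness of $u-1$ (the paper uses $(u-1)^2=(m-2)u$, you use $\mathcal{N}_{\kk(u)/\kk}(u-1)=2-m$, which is the same computation). No substantive difference.
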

\begin{proof}
For part (a), $\sigma(r)=(-r-1)/(r+u)$ is a zero of $T(m,A,x)$, hence is a unit.  For part (b), use $P_{s}(-1)=\bar{P}_{s}(-1)=m-2$ and $(u-1)^{2}=( m-2)u$.
\end{proof}
\begin{rem}
The three units in (b) may be incorporated in various normalized \emph{four}-term exceptional sequences  as in \cite{lenstra:euclidean}.  One such is  $0$, $1$, $u$, $-r$.
\end{rem}
We obtain additional units and associates when $m$, $A$, and $w\in\mathcal{O}_{\kk}$.    The following result does \emph{not} require that $[E\!:\!\kk]=4$.
\begin{prop}[``Constellations'' of units and associates]\label{p:constell}
Let $\kk$ be a number field, $m,d\in\mathcal{O}_{\kk}$, $d\mid m-2$, and $A$, $w$ and $P_{w}(x)$ as in \textup{Eqs.~\eqref{e:dihedparms}} and \textup{\eqref{e:dihedquartic}}.  If $P_{w}(r)=0$, then
\begin{enumerate}\renewcommand{\labelenumi}{(\alph{enumi})}
\item $r+1\sim r+u\sim r+1+d$, and $r \sim r+1+d+u\sim 1$.
\item If $d\in\mathcal{O}_{\kk}^{\times}$, then all the quantities in \textup{(a)} are units.
\end{enumerate}
\end{prop}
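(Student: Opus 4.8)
The plan is to reduce both parts to the single observation that the quadratic factor of $P_w(x)$ having $r$ as a zero takes a remarkably clean form. Let $u$ denote the value assigned to $r$ by \eqref{e:srat2}; this $u$ is precisely the constant term of that quadratic factor (the product of $r$ with its partner root). I would first show that the factor in question is
$$x^2+(1+d+u)\,x+u.$$
This is just \eqref{e:q1} rewritten: substituting $A=-m-2-d-(m-2)/d$ and $w=(m-2)/d-d$ from \eqref{e:dihedparms} and using $s=2u-m$, the $x$-coefficient $(-A+s-w)/2$ collapses to $1+d+u$, while the constant term is $u=(m+s)/2$.

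Granting this, parts (a) and (b) fall out of elementary manipulation of $q_1(r)=0$, that is, $r^2+(1+d+u)r+u=0$. Dividing by $r$ gives
$$r+1+d+u=-u/r;$$
since $r$ is a zero of $T(m,A,x)$ and $u$ satisfies $u^2-mu+1=0$, both are units, so $r+1+d+u$ is a unit and $r\sim r+1+d+u\sim 1$. Subtracting $dr$ from the defining relation yields
$$(r+1)(r+u)=-dr,$$
and writing $r+1+d=(r+1+d+u)-u=-u(r+1)/r$ shows $r+1+d\sim r+1$. Finally I would quote Proposition~\ref{p:unitassoc}(a) for $r+1\sim r+u$ (equivalently, that $\sigma(r)=-(r+1)/(r+u)$ in \eqref{e:C4} is a zero of $T(m,A,x)$, hence a unit); chaining the relations gives $r+1\sim r+u\sim r+1+d$, which together with the previous line completes (a).

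For (b) the identity $(r+1)(r+u)=-dr$ does all the work: if $d\in\mathcal O_{\kk}^{\times}$ its right-hand side is a unit, forcing $r+1$ and $r+u$ to be units, and then $r+1+d\sim r+1$ is a unit as well; combined with the units $r$ and $r+1+d+u$ already produced, every quantity in (a) is a unit.

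The mathematical content is thus concentrated entirely in the first step, and I expect the only real obstacle to be a bookkeeping one: verifying that \eqref{e:srat2} assigns to a zero of the companion factor $q_4$ its own constant term $u^{-1}$, so that the displayed quadratic is valid at all four zeroes of $P_w(x)$ simultaneously. Settling this is what permits a uniform treatment of every zero and, in particular, is the reason the conclusion survives without the hypothesis $[E\!:\!\kk]=4$.
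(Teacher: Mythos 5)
Your proposal is correct and follows essentially the same route as the paper: both rest on rewriting the relevant quadratic factor as $q_{1}(x)=x^{2}+(1+d+u)x+u$ and reading off the associate relations from $q_{1}(r)=0$ (with $r+1\sim r+u$ supplied by Proposition~\ref{p:unitassoc}(a)), and your identity $(r+1)(r+u)=-dr$ for part (b) is just the per-factor form of the paper's evaluation $P_{w}(-1)=d^{2}$. Your closing remark about checking that the companion factor $q_{4}$ has the same shape with $u^{-1}$ in place of $u$ is a legitimate bookkeeping point that the paper's one-line proof leaves implicit.
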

\begin{proof}
In this case, $q_{1}(x)=x(x+1+d) +u(x+1)=x(x+1+d+u)+u$, giving (a).  For (b), note that in Eq. \eqref{e:dihedquartic}, $P_{w}(-1)=d^{2}$.  Thus, if $d\in\mathcal{O}_{\kk}^{\times}$, $r+1\sim 1$, and the result follows.
\end{proof}
%
%new section 
%
\section{Extensions of $\kk=\Q$}\label{s:newsimplest}
We let $\kk=\Q$ and $m,A\in\Z$.  For $\abs{m},\abs{A+(m+2)/2}\le 10^{4}$, a simple numerical sweep found that $[E\!:\!\Q]=8$ for over $95\%$ of pairs $(m,A)$.  So it appears that for $m,A\in\Z$, $T(m,A,x)$ ``typically'' produces non-normal octic fields whose normal closures over $\Q$ have Galois group $G\cong$ $ _{8}T_{11}$.
%
%subsection
\subsection{Real and complex fields}\label{ss:realcomplex}
The only $(m,A)\in\Z\times\Z$  to which Theorem \ref{t:signature}(e) applies, is $(4,-3)$.  Here, $A=-(m+2)/2$, the ``degenerate'' case $d=-2$ of  Eq.~\eqref{e:dihedquartic}; we have $G\cong V_{4}$ and $L=\Q(\zeta_{12})$.  The only $t\in\Z$ for which $\abs{t+1}<1$ as in Theorem \ref{t:sig2} is $t=-1$.  In this case, $L=\Q(\zeta_{5})$.

Apart from these cases, assuming $m\ne\pm 2$, by Theorem \ref{t:signature} the splitting field of  $T(m,A,x)$ is totally real for $m,A\in\Z$ unless $s^{2}<0$ ($m=-1$, $0$, or $1$); or, $m>2$ and $w^{2}<0$ or $y^{2}<0$.  Now $y^{2}<0$ requires that $A^{2}<4(m-2)$, so non-real fields are relatively rare.  When $[E\!:\!\Q]<8$, they are even less common.  We have the following result:
\begin{thm}\label{t:mostlyreal}
Let $m,A\in\Z$, $s^{2}w^{2}y^{2}\ne 0$, $L$ the splitting field of $T(m,A,x)$.
\begin{enumerate}\renewcommand{\labelenumi}{(\alph{enumi})}
\item If $yw\in\Z$, then $L$ is totally real unless $(m,A)=(1,-4)$, $(1,1)$, or $(4,-3)$.
\item If $swy\in\Z$, then $L$ is totally real.
\item If $m\in\Z$, $d\in\Z$, $d\mid m-2$, $d^{2}\le\abs{m-2}$, and $A$ and $P_{w}(x)$ are as in \textup{Eqs.~\eqref{e:dihedparms}} and \textup{\eqref{e:dihedquartic}}, then $L$ is totally real unless $(m,d)=(-1,\pm1)$, $(0,\pm1)$, $(1,\pm 1)$, $(4,-1)$, or $(m,-1)$ with $m> 4$.
\item If $sw\in\Z$, then $L$ is totally real unless $(m,A)=(7,-4)$ or $(7,1)$.
\end{enumerate}
\end{thm}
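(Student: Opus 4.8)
The plan is to reduce every assertion to the signature computation already carried out in Theorem~\ref{t:signature}. Since $s^2w^2y^2\ne0$ and $m\ne\pm2$, the octic $T(m,A,x)$ has eight simple zeroes, so $L$ is totally real exactly when all eight zeroes are real, that is, exactly when we are in case (c) or case (d) of Theorem~\ref{t:signature}. Read in the other direction, $L$ fails to be totally real precisely when one of the following occurs: $s^2<0$, i.e.\ $m\in\{-1,0,1\}$ (case (a)); $m>2$ with $w^2<0$ or $y^2<0$ (cases (a),(b)); or $2<m<6$ with $-m-2+2\sqrt{m-2}<A<-2\sqrt{m-2}$ (case (e)). The last region contains the single integer pair $(m,A)=(4,-3)$. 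Thus in each part the task is to decide which admissible $(m,A)$ land in one of these non-real configurations.

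The second ingredient I would use is that each hypothesis forces a product of the quantities $s^2,w^2,y^2$ from \eqref{e:newvars} and \eqref{e:lastnewvar} to be a square, hence nonnegative, which I combine with the inequality $\max(w^2,y^2)\ge(m-6)^2/4$ from the Remarks after Theorem~\ref{t:signature}, forbidding $w^2,y^2$ from being simultaneously negative. For (b), $swy\in\Z$ gives $s^2w^2y^2>0$; since $m<2$ forces $w^2,y^2>0$, a negative $s^2$ would make the product negative, so $s^2>0$, whence $m\le-3$ (case (c)) or $m\ge3$ with $w^2,y^2>0$ (case (d) or (e)); as $(4,-3)$ has $s^2w^2y^2=12\notin\Q^2$ the case-(e) point is excluded, proving (b) with no exceptions. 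For (a), $wy\in\Z$ makes $w^2y^2$ a nonnegative square, so $w^2,y^2$ share a sign and hence are both positive, ruling out case (b); the only non-real possibilities are then $s^2<0$ ($m\in\{-1,0,1\}$) and the case-(e) point $(4,-3)$, which indeed has $wy=1\in\Z$. For (d), $sw\in\Z$ makes $s^2w^2$ a nonnegative square; since $m<2$ gives $w^2>0$, a negative $s^2$ is impossible, so $m\le-3$ (case (c)) or $m\ge3$, and the non-real options collapse to $y^2<0$ (case (b)), the point $(4,-3)$ being excluded because $s^2w^2=12\notin\Q^2$.

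For the dihedral part (c) no square condition is required: the parametrization \eqref{e:dihedparms} puts $w=(m-2)/d-d\in\kk$ directly, so $w^2>0$, while $A=-m-2-d-(m-2)/d$ determines $y^2=A^2-4(m-2)$; in particular $d=-1$ forces $A=-3$ and $y^2=17-4m$. Total reality again fails only through $s^2<0$, through $y^2<0$, or at the case-(e) point. The constraints $d\mid m-2$ and $d^2\le\abs{m-2}$ restrict $d$ to $\pm1$ when $m\in\{-1,0,1\}$, giving the pairs $(m,\pm1)$ with $s^2<0$; the choice $d=-1$ yields $A=-3$, so $y^2<0$ exactly for $m\ge5$ (the pairs $(m,-1)$ with $m>4$), while $m=4,\ d=-1$ is the case-(e) pair recorded as $(4,-1)$. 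Every remaining admissible $(m,d)$ has $m\le-3$, or $m\ge3$ with $y^2>0$ and $\abs{(A+4)^2+mA(A+4)+A^2}>16$, hence is totally real.

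The main obstacle lies in the finite verification buried in parts (a) and (d): one must confirm that, outside the configurations already located, the square conditions have no further integer solutions. In (a) this amounts to showing that for each fixed $m\in\{-1,0,1\}$ the equation ``$w^2y^2\in\Q^2$'' (a quartic in $A$ set equal to a square) has only the listed integer points; in (d) that the simultaneous demands $y^2<0$, giving $\abs{A}<2\sqrt{m-2}$, and $s^2w^2\in\Q^2$ admit only $(7,-4)$ and $(7,1)$. Bounding $m$ in this last case is the genuinely delicate step, where I would invoke the norm-from-a-real-quadratic-field description of the Murphy's twins case $sw\in\kk$ to constrain the solutions before carrying out a finite search. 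Once the candidate set is known to be finite and explicit, testing each pair against the sign criteria of Theorem~\ref{t:signature} is routine.
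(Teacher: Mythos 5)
Your reduction is exactly the paper's: by Theorem \ref{t:signature}, $L$ fails to be totally real only via $s^{2}<0$, via $m>2$ with $w^{2}<0$ or $y^{2}<0$, or at the unique integer point $(4,-3)$ of case (e); each hypothesis forces a product of $s^{2},w^{2},y^{2}$ to be a nonnegative square, and $w^{2},y^{2}$ cannot both be negative. On this basis your parts (b) and (c) are essentially complete and match the paper's (for (c), when $4\le d^{2}\le\abs{m-2}$ the paper justifies the $y^{2}>0$ you assert for the ``remaining admissible'' pairs via $\abs{(m-2)/d+d}\le\abs{(m+2)/2}$, whence $y^{2}\ge (m-6)^{2}/4$; you should include that inequality).

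The gap is in (a) and (d), and it is where the actual content of the theorem lives: you reduce each to a Diophantine finiteness claim and then stop. For (d), the paper's proof \emph{is} the mechanism you are missing, namely the polynomial square root $s^{2}w^{2}=(m^{2}+Am+2A+4)^{2}-4A(A+4)m-8(A^{2}+4A+8)$. Since non-reality here forces $A^{2}<4(m-2)$, the remainder is small against the main term, so a perfect square $s^{2}w^{2}$ must equal $(m^{2}+Am+2A+4-2k)^{2}$ with $0\le k\le 4$ and $A$ confined to a short range; each choice then pins down $m$, and the finite check produces $(7,-4)$ and $(7,1)$. Your proposed substitute---invoke Proposition \ref{p:twins} and then search---does not terminate as stated, because that proposition supplies \emph{infinitely many} $(m,A)$ with $sw\in\Z$ for each admissible quadratic field; you still need precisely the bound on $m$ that the identity above delivers. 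For (a), the needed fact is that for $m\in\{-1,0,1\}$ the quantity $w^{2}y^{2}$ is a nonzero square only at the listed $A$; composing the two quadratic forms gives $w^{2}y^{2}=\bigl(A^{2}+(m+2)A+4(2-m)\bigr)^{2}+4(2-m)(m+2)^{2}$, i.e.\ $M^{2}+12$, $M^{2}+32$, $M^{2}+36$ for $m=-1,0,1$, and a difference-of-squares factorization finishes each case. Carrying this out is not optional: for $m=1$ it yields exactly $A\in\{-4,1\}$, but for $m=0$ it yields $A=-1$, and $(m,A)=(0,-1)$ (with $w^{2}=y^{2}=9$, $s^{2}=-4$) satisfies the hypotheses of (a) while $L$ is totally imaginary---a pair that appears in the exception list of (c) (as $(m,d)=(0,1)$) but not in that of (a). So the verification you postpone is precisely where both the substance and the pitfalls of the statement are located.
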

\begin{proof}
For (a) and (b), one simply checks the criteria of Theorem \ref{t:signature}, keeping in mind that $w^{2}$ and $y^{2}$ cannot both be negative.  For (c), we have $w^{2}>0$, and if $4\le d^{2}\le\abs{m-2}$, $\abs{(m-2)/d+d}\le\abs{(m+2)/2}$, so $y^{2}\ge (m-6)^{2}/4$.

For (d), we have the ``polynomial square root'' identity
$$s^{2}w^{2} = (m^{2} + Am + 2A + 4)^{2}-4A(A+4)m -8(A^{2} +4A +8).$$
If $s^{2}w^{2}$ is a perfect square, $s^{2}w^{2}=(m^{2} + Am + 2A + 4-2k)^{2}$ for some $k\in\Z$.  Now here, $L$ is totally real unless $m>2$ and $A^{2}<4(m-2)$.  This makes the ``remainder'' $-4A(A+4)m -8(A^{2} +4A +8)$ so small, we only need to consider the cases $-4<A<0$  and $0\le k \le 4$.  The only ones where $sw\in\Z$ and $s^{2}w^{2}y^{2}\ne 0$ have $k=1$, namely $(m,A)=(7,-4)$ and $(7,1)$.
\end{proof}
The only cases where $L/\Q$ is Abelian and non-real are $(m,A)=(3,-3)$ or $(3,-2)$ $(L=\Q(\zeta_{5}))$; $(4,-3)$ $(L=\Q(\zeta_{12}))$; $(1,-4)$ or $(1,1)$ $(L=\Q(\zeta_{15}))$; $(7,-4)$ or $(7,-5)$ $(L=\Q(\zeta_{20}))$; and $(7,1)$ or $(7,-10)$ ($L\subset\Q(\zeta_{95}),[L\!:\!\Q]=8$).  The pairs $(m,A)=(7,-4)$ and $(7,1)$ give ``Murphy's twins.'' In both cases, the splitting field of one is $\mathbb{Q}(\zeta_{5})$, while that of the other is the real subfield of $L$.  The $C_{4}\times C_{2}$ cyclotomic field $\Q(\zeta_{16})$ is ``left out,'' although its real subfield is the splitting field of $P_{t}(x)$ for $t=2$.

All quaternion fields defined by $T(m,A,x)$ for $m,A\in\Z$ are totally real, by Theorem \ref{t:mostlyreal}(b).
%
% subsection
\subsection{Special units}\label{ss:unitsassocs}
We have $\Z^{\times}=\{-1,1\}$, so Propositions~\ref{p:unitassoc}(b) and \ref{p:constell}(b) only apply with $m\in\{1,3\}$ and $d\in\{-1,1\}$, respectively.

The 1-parameter family $T(1,A,x),A\in\Z$, produces fields which are totally imaginary by Theorem \ref{t:signature}(a), and with exceptional sequences of three units by Proposition~\ref{p:unitassoc}(b).  We have $[E\!:\!\Q]=8$ except for $A\in\{-3,0\}$ or $\{-4,1\}$, when $[E\!:\!\Q]=4$.  Eq.\ \eqref{e:dihedparms} gives $A=-3$ when $d=-1$; the quartic factors $P_{w}(x)=x^{4} + x^{3} + 2x^{2} + 2x + 1$ and $\bar{P}_{w}(x)=x^{4} + 5x^{3} + 8x^{2} + 4x + 1$ of $T(1,-3,x)$, both have the minimum quartic discriminant (See \cite{hua:intronumthr}) of 117.   The related octic, $T(1,0,x)=x^{8} - 3x^{6} + 3x^{5} + 14x^{4} + 15x^{3} + 9x^{2} + 3x + 1$, is  a defining polynomial for the splitting field, the Hilbert Class Field of $\Q(\sqrt{-39})$.

Both related octics $T(1,-4,x)$ and $T(1,1,x)$ are irreducible in $\Z[x]$, with $G\cong C_{4}\times C_{2}$, by Proposition \ref{p:c4c2andquatrn}(a) with $t=1$.  The splitting field is $\Q(\zeta_{15})$.

The 1-parameter family $T(3,A,x)$, $A\in\Z$, also produces fields with exceptional sequences of three units.  But these fields are totally real, except when $A\in \{-6,1\}$, $\{-5,0\}$, $\{-4,-1\}$, or $\{-3,-2\}$. Since $4(m-2)=4$ for $m=3$, $w^{2}\notin\Z^{2}$ and $y^{2}\notin\Z^{2}$ unless $w^{2}y^{2}=0$.  It is also not hard to check that with $m=3$ and $A\in\Z$, $w^{2}y^{2}\notin\Z^{2}$ and $s^{2}w^{2}y^{2}=5w^{2}y^{2}\notin\Z^{2}$ unless $w^{2}y^{2}=0$. Thus, $[E\!:\!\Q]=8$ unless either $5w^{2}\in\Z^{2}$ or $5y^{2}\in\Z^{2}$.  Taking $5w^{2}\in\Z^{2}$ gives a family of Murphy's twins  (See Eqs.\ \eqref{e:sw1} - \eqref{e:barsw2}).  These define a family of cyclic quartic fields which (apart from $\mathbb{Q}(\zeta_{5})$) are real, contain $\Q(\sqrt{5})$, and have exceptional sequences of three units by Proposition \ref{p:unitassoc}(b).

Using Pari-GP, we checked $T(m,A,x)$ for small values of $m$ and $A$ against the octic fields with small discriminants listed in the tables of \cite{cohenetal:octics}. We found that $T(3,-6,x)$, $T(1,-2,x)$, and $T(1,-14,x)$ all define the two conjugate octic fields of minimum discriminant for signature $(0,4)$ and $G\cong _{8}T_{11}$.  Each of these polynomials gives exceptional sequences of $3$ units in those fields.

If $m,d\in\Z$, $m\ne\pm 2$, $d\mid m-2$, and $P_{w}(r)=0$, $P_{w}(x)$ as in Eqs.~\eqref{e:dihedparms} and \eqref{e:dihedquartic}, then $P_{w}(\sigma(r))\ne 0$, but $\sigma(r)=(-r-1)/(r+u)$ is a unit in $\Q(r)$.  When $d\ne -1$, this allows us to exhibit a system of three independent units  for the field $\Q(r)$ when this is a totally real quartic field.  The values $d=-1$ and $d=1$ give the 1-parameter families
\begin{subequations}\label{e:specdihedquarts}
\begin{align}
\label{e:sda}&P_{w}(x)=x^{4}+mx^{3}+(m+1)x^{2}+2x+1,\text{ and}\\
\label{e:sdb}&P_{w}(x)=x^{4}+(m+4)x^{3}+(3m+5)x^{2}+(2m+2)x+1,
\end{align}
\end{subequations}
respectively, to which Proposition~\ref{p:constell}(b) applies.  If $P_{w}(r)=0$ in Eq.~\eqref{e:sdb}, then (with $u$ defined via Eq.~\eqref{e:srat2})  $r$, $r+1$, $r+2$, $r+u$, and $r+2+u$ are all units.
%
%subsection
\subsection{Murphy's twins}\label{ss:murphtwins}
If $m,A\in\Z$ and $sw\in\Z$, the Murphy's twins $P_{sw}(x)$ and $\bar{P}_{sw}(x)$ define cyclic quartic number fields when the conditions of Theorem \ref{t:irrcyc} hold, and (with $u$ and $\sigma$ defined via Eq.~\eqref{e:srat2}) their zeroes are units satisfying \eqref{e:M} with $n=4$.  We give a description of Murphy's twins cyclic quartic fields having a given quadratic subfield, based on standard results about norms from real quadratic fields.  We show that any quadratic field which is contained in \emph{some} cyclic quartic field, is a subfield of infinitely many Murphy's twins cyclic quartic fields.

Let $d>1$ be a squarefree integer, $K=\Q(\sqrt{d})$, and assume that $m\in\Z$, $m^{2}-4=dN^{2}$, $N\in\Z$.  Then $(m+N\sqrt{d})/2$  is a unit of norm $1$ in $\mathcal{O}_{K}$, so $m$ and $N$ are (to within sign) generalized Lucas and Fibonacci numbers, respectively, which we describe as follows.

Let $\varepsilon>1$ be the fundamental unit of $K$, and $\bar{\varepsilon}$ its conjugate.  Let
\begin{equation}\label{e:lf}\tag{LF}
\varepsilon^{n}=\frac{L_{n}+F_{n}(\varepsilon-\bar{\varepsilon})}{2}, \text{ where } L_{n},F_{n}\in\Z.
\end{equation}
The properties in \cite{hardywright:introthrnum}, Theorem 179, where $d=5$, and $L_{n}$ and $F_{n}$ are the Lucas and Fibonacci numbers, easily generalize to any squarefree $d>1$.  We have $\Q(\sqrt{m^{2}-4})=\Q(\sqrt{d})$ when $m=\pm L_{n}$, where $n$ is arbitrary if $\mathcal{N}(\varepsilon)=+1$, but $n$ must be \emph{even} if $\mathcal{N}(\varepsilon)=-1$.

Now we want $w^{2}=dY^{2}$ for some $Y\in\Z$.  Taking $X=m+A+2\in\Z$, we may express this as $(X+Y\sqrt{d})/2=\pi\in\mathcal{O}_{K}$, $\mathcal{N}(\pi)=m-2$. Then $X^{2}+(m-2)^{2}=d(N^{2}+Y^{2})$, so $d$ is the sum of two squares, as expected if $K$ is contained in a cyclic quartic field.  We have the following result:
\begin{prop}\label{p:twins}
Let $d>1$ be squarefree, $d=a^{2}+b^{2}$, $K=\Q(\sqrt{d})$, $L_{n}$ and $F_{n}$ as in \textup{Eq.\ \eqref{e:lf}}.  There are infinitely many $m$ such that $\Q(s)=K$, for each of which there are infinitely many $A\in\Z$ such that $T(m,A,x)=P_{sw}(x)\bar{P}_{sw}(x)$ in $\Z[x]$.
\end{prop}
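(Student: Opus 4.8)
The plan is to reduce the statement to two simultaneous Pell-type problems sharing the parameter $m$, to solve them constructively using the representation $d=a^{2}+b^{2}$, and then to invoke the factorization in \eqref{e:Psw}. Recall from \eqref{e:newvars} that $s^{2}=m^{2}-4$ and $w^{2}=(m+A+2)^{2}-4(m-2)$, and that $P_{sw}(x)=q_{1}(x)q_{3}(x)$ and $\bar{P}_{sw}(x)=q_{2}(x)q_{4}(x)$ always lie in $\kk(sw)[x]=\Q(sw)[x]$. Hence it suffices to exhibit infinitely many $m$ with $\Q(s)=K$, and for each of them infinitely many $A\in\Z$ with $sw\in\Q$; for such a pair, $P_{sw}(x)$ and $\bar{P}_{sw}(x)$ are monic polynomials in $\Q[x]$ dividing the monic integer octic $T(m,A,x)$, so Gauss's lemma puts them in $\Z[x]$.

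First I would pin down the admissible $m$. The condition $\Q(s)=\Q(\sqrt{m^{2}-4})=K$ is equivalent to $m^{2}-dN^{2}=4$ for some $N\in\Z$, i.e.\ to $(m+N\sqrt d)/2$ being a unit of norm $1$ in $\mathcal{O}_{K}$. Writing $\eta_{0}=(m_{0}+N_{0}\sqrt d)/2$ for the fundamental unit of norm $1$ (so $\eta_{0}=\varepsilon$ or $\varepsilon^{2}$ according as $\mathcal{N}(\varepsilon)=+1$ or $-1$), the solutions are $(m_{k}+N_{k}\sqrt d)/2=\pm\eta_{0}^{\,k}$, and \eqref{e:lf} records them through $L_{n},F_{n}$. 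There are infinitely many, with $m_{k}\ne\pm2$ for $k\ge1$.

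The crux is to solve $w^{2}=dY^{2}$ in integers for a given such $m$, equivalently to represent $m-2=\mathcal{N}(\pi)$ by some $\pi=(X+Y\sqrt d)/2\in\mathcal{O}_{K}$, after which I set $A=X-m-2$. Here the hypothesis $d=a^{2}+b^{2}$ enters through the Brahmagupta--Fibonacci identity $d(N^{2}+Y^{2})=(aN+bY)^{2}+(aY-bN)^{2}$: putting $Y=(m-2+bN)/a$ and $X=aN+bY$ forces $m-2=aY-bN$, and then, using $dN^{2}=m^{2}-4$, a one-line computation gives $X^{2}-dY^{2}=4(m-2)$ \emph{automatically}, whence $w^{2}=X^{2}-4(m-2)=dY^{2}$ and $sw=\pm dNY\in\Z$. \textbf{The main obstacle} is the single integrality constraint $a\mid m-2+bN$ imposed by this recipe. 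I would dispose of it by periodicity: the pair $(m_{k},N_{k})$ satisfies a linear recurrence whose companion matrix has determinant $\mathcal{N}(\eta_{0})=1$, so $(m_{k},N_{k})\bmod a$ is purely periodic and returns to $(m_{0},N_{0})=(2,0)$; at every $k$ with $(m_{k},N_{k})\equiv(2,0)\pmod a$ one has $m-2+bN\equiv0\pmod a$, and there are infinitely many such $k\ge1$. This yields infinitely many admissible $m$.

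Finally, with $m$ fixed I would manufacture infinitely many $A$ by multiplying one such $\pi$ by powers of the norm-$1$ unit $\eta_{0}$: each $\pi\eta_{0}^{\,\ell}\in\mathcal{O}_{K}$ again has norm $m-2$, and since $\abs{\pi\eta_{0}^{\,\ell}}\to\infty$ the resulting integers $X_{\ell}$ are unbounded, giving distinct $A_{\ell}=X_{\ell}-m-2$, all with $sw\in\Z$. Because $m^{2}-4\notin\Q^{2}$, Corollary \ref{c:irroct} and Theorem \ref{t:irrcyc} confirm that $T(m,A_{\ell},x)=P_{sw}(x)\bar{P}_{sw}(x)$ is the Murphy's twins factorization (the twins being distinct once $Y_{\ell}\ne0$, which fails at most finitely often). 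Apart from the divisibility bookkeeping, the argument rests only on multiplicativity of the norm and Gauss's lemma.
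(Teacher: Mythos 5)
Your proof is correct, and it rests on the same two pillars as the paper's: reducing the problem to representing $m-2$ as a norm from $\mathcal{O}_{K}$, and exploiting $d=a^{2}+b^{2}$. But the execution of the key step is genuinely different. The paper splits on $\mathcal{N}(\varepsilon)$: when $\mathcal{N}(\varepsilon)=-1$ it takes $\pi=(u-1)\varepsilon^{2j-1}$, using the norm-$(-1)$ unit to convert $\mathcal{N}(u-1)=2-m$ into $m-2$ with no congruence condition at all (so \emph{every} $m=L_{2k}$ works); when $\mathcal{N}(\varepsilon)=+1$ it multiplies $(u-1)/a$ by the element $b-\sqrt{d}$ of norm $-a^{2}$, after choosing $n$ with $\pm\varepsilon^{n}\equiv 1\pmod{a\mathcal{O}_{K}}$. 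Your Brahmagupta--Fibonacci computation is exactly the identity $X^{2}+(m-2)^{2}=d(N^{2}+Y^{2})$ that the paper records just before the proposition, and your substitution $Y=(m-2+bN)/a$, $X=aN+bY$ reconstructs, up to signs and conjugation, the paper's element $((u-1)/a)(b-\sqrt{d})$; in substance you have rediscovered the $\mathcal{N}(\varepsilon)=+1$ argument and applied it uniformly, trading the case split for the single congruence $a\mid m-2+bN$ enforced by periodicity. What this buys is a unified, essentially unit-free construction of $\pi$ (units reappear only to generate infinitely many $A$); what it costs is a thinner family of $m$ when $\mathcal{N}(\varepsilon)=-1$, which is harmless since only infinitude is claimed. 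Two points to tighten: the one-step transition matrix for $(m_{k},N_{k})$ has entries in $\frac{1}{2}\Z$, so for even $a$ you should run the periodicity argument on the integer trace recurrences $m_{k+1}=m_{1}m_{k}-m_{k-1}$ and $N_{k+1}=m_{1}N_{k}-N_{k-1}$ (characteristic polynomial $x^{2}-m_{1}x+1$, whose constant term is $\mathcal{N}(\eta_{0})=1$), which are purely periodic mod $a$ and return to the initial state $(2,0)$ infinitely often; and your closing appeal to Gauss's lemma is a clean way to land in $\Z[x]$ that the paper leaves implicit.
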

\begin{proof}
If $\mathcal{N}(\varepsilon)=-1$, then  for any $j\in\Z$, we obtain $sw\in\Z$ by taking
$$m=L_{2k},\; X+Y\sqrt{d}=2(\varepsilon^{2k} -1)\varepsilon^{2j-1},\;\text{and}\;A=-m-2\pm X.$$
If $\mathcal{N}(\varepsilon)=+1$, choose $n$ and the $\pm$ sign so that $u=\pm\varepsilon^{n}\equiv 1\pmod{a\mathcal{O}_{K}}$.  Then $(u-1)/a\in\mathcal{O}_{K}$, so $sw\in\Z$ for any $j\in\Z$, if
$$m=\pm L_{n}, X+Y\sqrt{d}=2((u-1)/a)(b-\sqrt{d})\varepsilon^{j},\text{ and}\;A=-m-2\pm X.$$
\end{proof}
\begin{rem}
The equation $\mathcal{N}(\pi)=m-2$ may well have other solutions than those given in Proposition \ref{p:twins}.
\end{rem}
Thus, there are Murphy's twins cyclic quartic fields containing $\Q(\sqrt{d})$ whenever $d>1$ is squarefree and the sum of two squares.    But suppose $d$ is even as well; and further, that $K=\Q(\sqrt{d})$ has $\mathcal{N}(\varepsilon)=+1$.  Then, $K$ is \emph{not} a subfield of any of Washington's cyclic quartic fields (whose quadratic subfields always have $\mathcal{N}(\varepsilon)=-1$).  It is also easy to show that $K$ is not a subfield of any ``simplest'' quartic field. The smallest such $d$ is $34$.

The only non-real Murphy's twins cyclic quartic field is $\mathbb{Q}(\zeta_{5})$, which occurs when $(m,A)=(3,-3)$, $(3,-2)$, $(7,-4)$, and $(7,1)$.

We can make  $P_{(sw)'}(x)=x^{4}P_{sw}(1/x)$ as per Proposition \ref{p:diffgen}, with parameter $m'=m$, the same square root $s$ of $m^{2}-4$, and $(sw)'=s\cdot w'$, specifying $A'$ and $w'$ by
\begin{subequations}
\begin{equation}\label{e:conjshift}\frac{m+2+A'+w'}{2}=\left(\frac{m+2+A-w}{2}\right)\left(\frac{m+s}{2}\right),\text{ that is}\end{equation}
\begin{equation}A'=\frac{m^{2}+mA-4-sw}{2}\text{ and }w'=\frac{(m+A+2)s-mw}{2}.\end{equation}
\end{subequations}
Of course, $(m,A',-s,-w')$ gives the same $P_{(sw)'}(x)$ as $(m,A',s,w')$.  Note, however,  that if $sw\ne 0$, $P_{sw}(x)$ and $\bar{P}_{sw}(x)$ produce different values of $A'$.  For the $P_{t}(x)$ in \S\ref{ss:washquart}, $A'=-t^3 - t^2 - 2t - 4$ and $(sw)'=-t^5 - 4t^3$ give $P_{(sw)'}(x)=x^{4}P_{t}(1/x)$.

The case $m=3$ is particularly simple.  Here, $d=5$, $\varepsilon=\tau$, the ``golden ratio,'' and $(X+Y\sqrt{5})/2$ has norm $m-2=1$.  With $X=L_{2j}$, $A=-5+L_{2j}$ and $sw=5F_{2j}$ (the original Lucas and Fibonacci numbers), we obtain
\begin{subequations}  \label{e:twins3}
\begin{align}
\label{e:sw1}&P_{sw}(x) = x^{4} + (5 - L_{2j} )x^{3} + (9 - 5F_{2j-1})x^{2} + (5 - L_{2j-2})x + 1\text{ and}\\
\label{e:barsw1}&\bar{P}_{sw}(x) = x^{4} + (5 - L_{2j} )x^{3} + (9 - 5F_{2j+1} )x^{2} + (5 - L_{2j+2} )x + 1.
\end{align}
\end{subequations}

With  $A=-5-L_{2j}$ and $sw=5F_{2j}$, we obtain
\begin{subequations}
\begin{align}
\label{e:sw2}&P_{sw}(x) =x^{4}+ (5 + L_{2j} )x^{3} + (9 + 5F_{2j+1})x^{2} + (5 + L_{2j+2})x + 1 \text{ and}\\
\label{e:barsw2}&\bar{P}_{sw}(x) = x^{4} + (5 + L_{2j})x^{3} + (9 + 5F_{2j-1})x^{2} + (5 + L_{2j-2} )x + 1.
\end{align}
\end{subequations}

As suggested by Eq.~\eqref{e:conjshift}, in both families $x^{4}P_{sw}(1/x)$ is obtained by taking $\bar{P}_{sw}(x)$ and shifting the index $j$.

If $P(r)=0$ for one of the cyclic quartics in Eqs.\ \eqref{e:sw1}-\eqref{e:barsw2} and $u$ is defined via Eq. \eqref{e:srat2}, $r$, $r+1$, $r+u$ is an exceptional sequence of $3$ units by  Proposition \ref{p:unitassoc}(b).  If $T(3,\pm L_{2j},r)=0$, then $r$, $r+1$, $r+u$ is also an exceptional sequence of $3$ units, which are generally of degree $8$ over $\Q$.

It is easy to show that for a given $m\in\Z$, there are only finitely many $A\in\Z$ for which either $y^{2}\in\Z^2$ or $y^{2}w^{2}\in\Z^2$.  Consequently, for a given $m$ there can be only finitely many $A$ yielding Murphy's twins which do \emph{not} have distinct splitting fields.  The octics with $(m,A)=(-3,-4)$, $(-7,8)$, and $(-66,13)$, are the only examples we know where  $s^{2}w^{2}y^{2}\ne 0$ and the ``twins'' define the \emph{same} $C_{4}$ extension of $\Q$.
%
%subsection
\subsection{The cases $[L\!:\!\Q]=8$ when $wy\in\Z$, $swy\in\Z$, and $w\in\Z$}\label{ss:deg8recap}
We apply Proposition \ref{p:c4c2andquatrn} to the cases $wy\in\Z$ and $swy\in\Z$ by taking $t\in\Z^{+}$.  In (a), we have $[E\!:\!\Q]=4$ when $t\ne 2$; and in (b), $[E\!:\!\Q]=4$ when $t>1$.  Clearly $t^{2}-4$ and $t^{2}+4$ are both squarefree for a positive proportion of $t\in\Z^{+}$; likewise for $t^{2}+4$ and $t^{2}+8$, so these families of $C_{4}\times C_{2}$ and $Q_{8}$ number fields are infinite.  The family of  quaternion fields mentioned in the Remarks after Proposition \ref{p:c4c2andquatrn} may also be shown to be infinite.

When $m,d\in\Z$, $m\ne\pm 2$, and $d\mid m-2$ in Eqs.~\eqref{e:dihedparms} and \eqref{e:dihedquartic}, it is not hard to show that $P_{w}(x)$ is irreducible in $\Z[x]$ unless $d=-2$ and $m=6$.  Apart from the special cases described after Eqs.~\eqref{e:dihedparms} and \eqref{e:dihedquartic}, the only instances we know where $P_{w}(x)\in\Z[x]$ has $G\ncong D_{4}$ are $(m,A)=(-3,5)$, $(-7,-3)$ and $(-66,51)$.  In these instances, $s^{2}y^{2}\in\Z^{2}$, and $G\cong C_{4}$.  The related octics are the examples mentioned at the end of \S\ref{ss:murphtwins}.
%
%subsection
\subsection{Regulator formulas}\label{ss:regs}
Let $\kk=\Q$, and $m,A\in\Z$.  If $T(m,A,r)=0$ and $F=\Q(r)$, then $\mathcal{O}_{F}^{\times}$ has rank 3 if $F$ has signature $(0,4)$ or $(4,0)$. We show that in most such cases, the system  $\langle\zeta,\varepsilon,r,\sigma(r)\rangle$ has rank $3$, where $\langle\zeta\rangle$ is the torsion units in $F$, $\varepsilon$ is the fundamental unit of a real quadratic subfield of $F$, $\sigma(r)$ is as in Eq.~\eqref{e:C4} with $u$ as per Eq.~\eqref{e:srat2}.  The regulator formulas are similar to those in \cite{lazarus:classunitquart} and \cite{wash:quartics}.

In the following three results (which we state without proof), by Theorem \ref{t:irrcyc} we can treat $\sigma$ as a field isomorphism.  Checking that the $\ln^{2}()$ values are not both $0$, is left as an exercise for the reader.
\begin{prop}\label{p:imagoctsnegs}
Let $m\in\{-1,0,1\}$, $(m,A)\ne(-1,-3)$, $(-1, 1)$, $(0,-3)$, $(0,-1)$, or $(1,-3)$.  Let $\varepsilon>1$ be the fundamental unit of $\Q(w)$ and $\langle\zeta\rangle$ the torsion units in $\Q(r)$.  Then
$$\textup{Reg}\langle\zeta,\varepsilon,r,\sigma(r)\rangle=16\ln(\varepsilon)\left(\ln^{2}\abs{r} + \ln^{2}\abs{\sigma(r)}\right) \ne 0.$$
\end{prop}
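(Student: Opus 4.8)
The plan is to realize $F=\Q(r)$ as a cyclic quartic extension of an imaginary quadratic field and to read off all the logarithmic embeddings from the $\langle\sigma\rangle$-action. First I would record the field structure. Since $m\in\{-1,0,1\}$ gives $s^{2}=m^{2}-4<0$, Theorem~\ref{t:signature}(a) shows $F$ is totally imaginary, so its unit rank is $3$; moreover $(sw)^{2}=s^{2}w^{2}<0$, so $\Q(sw)$ is genuinely imaginary quadratic. For every pair $(m,A)$ not on the excluded list one checks $w^{2}\notin\Q^{2}$ (the five excluded pairs are exactly those with $w^{2}\in\Q^{2}$), while $y^{2}=A^{2}-4(m-2)\ge A^{2}+4>0$; hence Corollary~\ref{c:irroct} makes $T(m,A,x)$ irreducible (so $[F\!:\!\Q]=8$, signature $(0,4)$) and Theorem~\ref{t:irrcyc} identifies $F=\Q(r)$ with the cyclic quartic extension of $\Q(sw)$ whose Galois group is $\langle\sigma\rangle$, $\sigma$ as in \eqref{e:C4}.

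Next I would pin down the archimedean places. The imaginary quadratic field $\Q(sw)$ has a single archimedean (complex) place, and all four complex places of $F$ lie above it; since $F/\Q(sw)$ is cyclic of degree $4$, the group $\langle\sigma\rangle$ permutes them in a single $4$-cycle. Fixing one place $w_{1}$ and writing $w_{k}=\sigma^{k-1}w_{1}$, every embedding value is governed by the action of $\sigma$ on the relevant subfields. From \eqref{e:sigmau} one gets $\sigma(u)=u^{-1}$, hence $\sigma(s)=-s$; since $\sigma$ fixes $sw$ this forces $\sigma(w)=-w$, so $\sigma(\varepsilon)=\pm\varepsilon^{-1}$ for the fundamental unit $\varepsilon$ of $\Q(w)$. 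Because $u\in\Q(s)$ has complex absolute value $1$, the relations $\sigma^{2}(r)=u/r$ and $\sigma^{3}(r)=u^{-1}/\sigma(r)$ in \eqref{e:C4} give $\ln\abs{\sigma^{2}(r)}=-\ln\abs{r}$ and $\ln\abs{\sigma^{3}(r)}=-\ln\abs{\sigma(r)}$ at each place.

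With these in hand the regulator is a short determinant. Writing $a=\ln\abs{r}_{w_{1}}$, $b=\ln\abs{\sigma(r)}_{w_{1}}$ and $\mu=\pm\ln\varepsilon$, and weighting each complex place by its local degree $2$, the logarithmic embeddings of the three non-torsion units become
$$\mathrm{Log}(r)=2(a,-b,-a,b),\quad \mathrm{Log}(\sigma(r))=2(b,a,-b,-a),\quad \mathrm{Log}(\varepsilon)=2\mu(1,-1,1,-1),$$
while $\zeta$ lies in the kernel. Deleting the column for $w_{4}$ and expanding the resulting $3\times3$ determinant yields $16\mu(a^{2}+b^{2})$, that is $\mathrm{Reg}\langle\zeta,\varepsilon,r,\sigma(r)\rangle=16\ln(\varepsilon)\bigl(\ln^{2}\abs{r}+\ln^{2}\abs{\sigma(r)}\bigr)$. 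Nonvanishing then reduces to $\ln\varepsilon\ne0$ (automatic, since $\varepsilon>1$) together with $a^{2}+b^{2}\ne0$, the promised exercise.

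The main obstacle is not the determinant but securing the clean place structure: verifying that $\langle\sigma\rangle$ acts as a single $4$-cycle on the complex places, and bookkeeping the signs $\sigma(\varepsilon)=\pm\varepsilon^{-1}$ and $\ln\abs{\sigma^{2}(r)}=-\ln\abs{r}$ consistently across the indexing $w_{k}=\sigma^{k-1}w_{1}$. The supporting edge cases---that the excluded pairs are precisely those with $w^{2}\in\Q^{2}$, and that $y^{2}>0$ keeps $P_{sw}(x)$ irreducible over $\Q(sw)$---are quick checks guaranteeing that the hypotheses of Theorem~\ref{t:irrcyc} and Corollary~\ref{c:irroct} genuinely hold here.
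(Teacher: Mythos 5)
Your proof is correct; the paper states Propositions \ref{p:imagoctsnegs}--\ref{p:realcycquart} \emph{without proof}, saying only that Theorem \ref{t:irrcyc} lets one treat $\sigma$ as a field isomorphism and that checking the $\ln^{2}(\,)$ values are not both zero is left as an exercise, and your argument supplies exactly that intended computation (irreducibility via Corollary \ref{c:irroct}, the simply transitive action of $\langle\sigma\rangle$ on the four complex places over the imaginary quadratic field $\Q(sw)$, the relations $\abs{u}=1$, $\sigma(s)=-s$, $\sigma(\varepsilon)=\pm\varepsilon^{-1}$ giving the sign patterns, and the $3\times 3$ determinant evaluating to $\pm 16\ln(\varepsilon)\left(\ln^{2}\abs{r}+\ln^{2}\abs{\sigma(r)}\right)$). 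Your deferral of the nonvanishing of $\ln^{2}\abs{r}+\ln^{2}\abs{\sigma(r)}$ matches the paper's own deferral, so nothing is missing relative to the source.
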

\begin{prop}\label{p:imagoctsposs}
Let $m>2$ and $(m+2+A)^{2}<4(m-2)$.  Let $\varepsilon>1$ be the fundamental unit of $\Q(s)$ and $\langle\zeta\rangle$ the torsion units in $\Q(r)$.  Then
$$\textup{Reg}\langle\zeta,\varepsilon,r,\sigma(r)\rangle=4\ln(\varepsilon)\left(\ln^{2}\abs{r^{2}/u} + \ln^{2}\abs{\sigma(r)^{2}u}\right) \ne 0.$$
\end{prop}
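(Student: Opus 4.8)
The plan is to express the regulator as the absolute value of a single $3\times 3$ logarithmic minor and to reduce that determinant, using the norm relation of Proposition~\ref{p:norm1}(b), to the asserted closed form.

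First I would record the signature. Since $m>2$ we have $s^{2}=m^{2}-4>0$, while the hypothesis $(m+2+A)^{2}<4(m-2)$ says exactly that $w^{2}<0$; by Theorem~\ref{t:signature}(a) the octic $T(m,A,x)$ then has no real zero, so $F=\Q(r)$ has signature $(0,4)$ (in particular $T$ is irreducible of degree $8$) and $\mathcal{O}_{F}^{\times}$ has rank $3$. Thus $\langle\zeta,\varepsilon,r,\sigma(r)\rangle$ is generated modulo torsion by the three units $\varepsilon,r,\sigma(r)$, and its regulator is $\abs{\det M}$ for any $3\times 3$ minor $M$ of the $3\times 4$ matrix whose rows are $\big(2\ln\abs{\phi_{k}(\eta)}\big)_{k=0}^{3}$, for $\eta\in\{\varepsilon,r,\sigma(r)\}$, taken over the four complex places.

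Next I would fix the Galois action. By Theorem~\ref{t:irrcyc}, $F/\kk(sw)$ is cyclic of degree $4$ with group $\langle\sigma\rangle$, so $\sigma$ is a genuine automorphism of $F$ fixing $\Q$; by \eqref{e:sigmau} it sends $u\mapsto u^{-1}$, hence $s=2u-m\mapsto -s$, and therefore it restricts to the nontrivial automorphism of $\Q(s)$, giving $\varepsilon\mapsto\bar{\varepsilon}$. Write $r_{k}=\sigma^{k}(r)$ and $a_{k}=\ln\abs{r_{k}}$. Since $s$ is real and $w$ purely imaginary, complex conjugation interchanges $q_{1}\leftrightarrow q_{2}$ and $q_{3}\leftrightarrow q_{4}$; as $r_{0},r_{2}$ are the zeroes of $q_{1}$ and $r_{1},r_{3}$ those of $q_{3}$, the four zeroes $r_{0},r_{1},r_{2},r_{3}$ lie in four distinct conjugate pairs (the eight zeroes being simple by Proposition~\ref{p:simple}, as $w^{2}y^{2}\neq 0$ here), so the embeddings $\phi_{k}\colon r\mapsto r_{k}$ represent the four complex places. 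Because $\phi_{k}=\phi_{0}\circ\sigma^{k}$ on $F$, and $\sigma^{k}(\varepsilon)$ equals $\varepsilon$ or $\bar{\varepsilon}$ as $k$ is even or odd (so $\ln\abs{\sigma^{k}(\varepsilon)}=\pm\ln\varepsilon$), the three rows are
\[
2\ln\varepsilon\,(1,-1,1,-1),\qquad 2(a_{0},a_{1},a_{2},a_{3}),\qquad 2(a_{1},a_{2},a_{3},a_{0}),
\]
the last because $\phi_{k}(\sigma(r))=\sigma^{k+1}(r)=r_{k+1}$ and $r_{4}=r_{0}$.

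Finally I would compute. Every row sums to zero, so all minors agree up to sign; deleting one column and pulling out the factor $8\ln\varepsilon$ (one $2$ per row and a $\ln\varepsilon$ from the first row) leaves a determinant equal, up to sign, to $\tfrac12\big((a_{0}-a_{2})^{2}+(a_{1}-a_{3})^{2}\big)$ once one imposes $a_{0}+a_{1}+a_{2}+a_{3}=0$ (equivalently $r_{0}r_{1}r_{2}r_{3}=1$, Proposition~\ref{p:norm1}(b)). Hence the regulator equals $4\ln\varepsilon\big((a_{0}-a_{2})^{2}+(a_{1}-a_{3})^{2}\big)$, and since $\sigma^{2}(r)=u/r$ and $\sigma(r)\sigma^{3}(r)=u^{-1}$ by \eqref{e:C4} and Proposition~\ref{p:norm1}(b), we have $a_{0}-a_{2}=\ln\abs{r^{2}/u}$ and $a_{1}-a_{3}=\ln\abs{\sigma(r)^{2}u}$, which is the claimed identity. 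The work here is bookkeeping rather than depth; the one genuine point—the ``exercise'' left to the reader—is the nonvanishing, namely that $\abs{r^{2}/u}$ and $\abs{\sigma(r)^{2}u}$ are not both $1$, equivalently that $\varepsilon,r,\sigma(r)$ are multiplicatively independent modulo torsion so that the rank of $\langle\zeta,\varepsilon,r,\sigma(r)\rangle$ is genuinely $3$.
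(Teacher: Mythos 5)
Your computation is correct, and it is worth noting that the paper states Propositions \ref{p:imagoctsnegs}--\ref{p:realcycquart} explicitly \emph{without} proof, so your argument supplies what the author omits. The key steps all check out: the four embeddings $r\mapsto\sigma^{k}(r)$ do represent the four distinct complex places (complex conjugation swaps $q_{1}\leftrightarrow q_{2}$ and $q_{3}\leftrightarrow q_{4}$, while $r_{0},\dots,r_{3}$ are the zeroes of $q_{1}q_{3}$, and simplicity of the zeroes follows since $w^{2}<0$ and $y^{2}\neq0$); the row for $\varepsilon$ alternates in sign because $\sigma(s)=-s$; and the determinant identity $a_{1}a_{3}-a_{2}^{2}+a_{0}a_{3}-a_{1}a_{2}+a_{0}a_{2}-a_{1}^{2}=-(a_{0}+a_{1})^{2}-(a_{1}+a_{2})^{2}=-\tfrac12\bigl((a_{0}-a_{2})^{2}+(a_{1}-a_{3})^{2}\bigr)$ under $\sum a_{k}=0$ is right, as are $r_{0}r_{2}=u$ and $r_{1}r_{3}=u^{-1}$, giving the stated closed form with the factor $8\ln\varepsilon\cdot\tfrac12=4\ln\varepsilon$. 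Two small points. First, your parenthetical ``(in particular $T$ is irreducible of degree $8$)'' has the logic backwards: irreducibility does not follow from the absence of real zeroes, and you need it \emph{before} you can assert signature $(0,4)$ and rank $3$. It does hold here, but via Corollary \ref{c:irroct}: for $m\in\Z$, $m>2$ (the standing hypothesis of \S\ref{ss:regs} is $m,A\in\Z$), $s^{2}=m^{2}-4$ is positive and not a square, $w^{2}<0$, and $s^{2}w^{2}<0$, so $[\kk(s,w)\!:\!\Q]=4$; this also forces $y^{2}\neq0$, which you need for simplicity of the zeroes and for Theorem \ref{t:irrcyc}. Second, you correctly identify the nonvanishing as the one substantive remaining issue and do not prove it; the paper likewise defers it (``left as an exercise for the reader''), so your proposal matches the paper's level of completeness on that point while being more explicit about everything else.
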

\begin{prop}\label{p:realcycquart}
Let $\abs{m}>2$.  Assume $T(m,A,x)=P_{sw}(x)\bar{P}_{sw}(x)$ in $\Z[x]$, $(m,A)\ne (3,-2)$ or $(3,-3)$.  Let $\varepsilon>1$ be the fundamental unit of $\Q(s)$.  If $T(m,A,r)=0$, $[\Q(r)\!:\!\Q]=4$, and $\Q(r)$ is real, then
$$\textup{Reg}\langle-1,\varepsilon,r,\sigma(r)\rangle=\frac{1}{2}\ln(\varepsilon)\left(\ln^{2}\abs{r^{2}/u} + \ln^{2}\abs{\sigma(r)^{2}u}\right) \ne 0.$$
\end{prop}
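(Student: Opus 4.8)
The plan is to compute the $3\times 3$ regulator determinant directly, using the cyclic action of $\sigma$ to pin down every entry. Since $T(m,A,x)=P_{sw}(x)\bar P_{sw}(x)$ with $[\Q(r)\!:\!\Q]=4$ and $\Q(r)$ real, Theorem~\ref{t:irrcyc}(b) tells us that $F=\Q(r)$ is a cyclic quartic field with $G=\langle\sigma\rangle\cong C_{4}$ acting on the roots via \eqref{e:C4}. Being totally real of degree $4$, $F$ has four real embeddings, which (as $F/\Q$ is Galois) are $\iota\circ\sigma^{i}$, $0\le i\le 3$, for a fixed $\iota\colon F\hookrightarrow\R$, and $\mathcal{O}_{F}^{\times}$ has rank $3$. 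The quadratic subfield is $\Q(s)$, with $s^{2}=m^{2}-4>0$ since $\abs{m}>2$, and it is the fixed field of $\sigma^{2}$; $\varepsilon$ is its fundamental unit. I would write the regulator as the absolute value of the determinant of the $3\times 3$ matrix whose rows are the logarithmic-embedding vectors of $\varepsilon$, $r$, $\sigma(r)$ against three of the four real places. Because each unit has norm $\pm1$, every such vector has coordinate-sum $0$, so the omitted place is immaterial and the minor is well defined up to sign.

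Next I would identify the entries. Put $\ell=\ln\varepsilon$, $\rho_{i}=\ln\abs{\iota\sigma^{i}(r)}$, and $\mu=\ln\abs{u}$ (with $u$ defined via \eqref{e:srat2}). Since $\sigma^{2}$ fixes $\Q(s)$ while $\sigma(\varepsilon)=\bar\varepsilon$ has $\abs{\bar\varepsilon}=\varepsilon^{-1}$, the $\varepsilon$-row is $(\ell,-\ell,\ell,-\ell)$; likewise $\sigma(u)=u^{-1}$ from \eqref{e:sigmau} gives $\ln\abs{\iota\sigma^{i}(u)}=(-1)^{i}\mu$. The $r$-row is $(\rho_{0},\rho_{1},\rho_{2},\rho_{3})$, and since $\iota\sigma^{i}(\sigma(r))$ has log-absolute-value $\rho_{i+1}$, the $\sigma(r)$-row is the cyclic shift $(\rho_{1},\rho_{2},\rho_{3},\rho_{0})$. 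The crucial simplification comes from $\sigma^{2}(r)=u/r$ in \eqref{e:C4}: applying $\iota\sigma^{i}$ yields $\rho_{i+2}=(-1)^{i}\mu-\rho_{i}$, hence $\rho_{2}=\mu-\rho_{0}$ and $\rho_{3}=-\mu-\rho_{1}$. These are consistent with $\sum_{i}\rho_{i}=0$, i.e.\ $\mathcal{N}_{F/\Q}(r)=1$ from Proposition~\ref{p:norm1}(b).

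Dropping the fourth place, the regulator matrix becomes
\[
\begin{pmatrix}\ell&-\ell&\ell\\ \rho_{0}&\rho_{1}&\rho_{2}\\ \rho_{1}&\rho_{2}&\rho_{3}\end{pmatrix}.
\]
Factoring $\ell$ from the top row, a cofactor expansion gives the determinant as $\ell\,[\rho_{1}\rho_{3}-\rho_{2}^{2}+\rho_{0}\rho_{3}-\rho_{1}\rho_{2}+\rho_{0}\rho_{2}-\rho_{1}^{2}]$; substituting $\rho_{2}=\mu-\rho_{0}$ and $\rho_{3}=-\mu-\rho_{1}$ collapses this quadratic form in $\rho_{0},\rho_{1},\mu$ to $-\tfrac12\ell\bigl[(2\rho_{0}-\mu)^{2}+(2\rho_{1}+\mu)^{2}\bigr]$. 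Recognizing $2\rho_{0}-\mu=\ln\abs{r^{2}/u}$ and $2\rho_{1}+\mu=\ln\abs{\sigma(r)^{2}u}$ under $\iota$, the absolute value is exactly the claimed expression $\tfrac12\ln(\varepsilon)\bigl(\ln^{2}\abs{r^{2}/u}+\ln^{2}\abs{\sigma(r)^{2}u}\bigr)$.

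The only remaining point — which I expect to be the genuinely delicate one — is the strict inequality, i.e.\ that $\ln\abs{r^{2}/u}$ and $\ln\abs{\sigma(r)^{2}u}$ do not both vanish, equivalently that $\varepsilon,r,\sigma(r)$ are multiplicatively independent modulo torsion. For almost all $(m,A)$ this follows from a direct archimedean size estimate on the real root $r$, but the degenerate pairs for which $\abs{r^{2}/u}=\abs{\sigma(r)^{2}u}=1$ must be barred; this is precisely why $(m,A)\ne(3,-2),(3,-3)$ are excluded (these yield the non-real $\Q(\zeta_{5})$, of unit rank $1$, where a rank-$3$ regulator is meaningless), and why the paper defers the final verification "that the $\ln^{2}$ values are not both $0$" to the reader.
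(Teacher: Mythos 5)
Your computation is correct, and it actually supplies more than the paper does: the paper explicitly states Propositions \ref{p:imagoctsnegs}--\ref{p:realcycquart} \emph{without proof}, remarking only that $\sigma$ may be treated as a field isomorphism (via Theorem \ref{t:irrcyc}) and that checking the two $\ln^{2}$ values are not both $0$ is ``left as an exercise.'' So there is no proof in the paper to compare against, but your derivation is the natural one and it checks out: the $\varepsilon$-row alternates because $\sigma$ restricts to the nontrivial automorphism of $\Q(s)$; the $\sigma(r)$-row is the cyclic shift of the $r$-row; the relation $\rho_{i+2}=(-1)^{i}\mu-\rho_{i}$ follows from $\sigma^{2}(r)=u/r$ and $\sigma(u)=u^{-1}$; the zero row-sums justify deleting a place; and the cofactor expansion with $\rho_{2}=\mu-\rho_{0}$, $\rho_{3}=-\mu-\rho_{1}$ does collapse to $-\tfrac{\ell}{2}\bigl[(2\rho_{0}-\mu)^{2}+(2\rho_{1}+\mu)^{2}\bigr]$, i.e.\ to the stated formula. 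Your parenthetical about $(m,A)=(3,-2),(3,-3)$ is also right: those pairs give $\Q(\zeta_{5})$ and are already excluded by the hypothesis that $\Q(r)$ is real.

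The one step you defer --- that $\ln\abs{r^{2}/u}$ and $\ln\abs{\sigma(r)^{2}u}$ do not both vanish --- is exactly the step the paper also defers, so you are not behind it; but it can be closed with no archimedean estimate at all. If both vanish, then $\abs{\iota\sigma^{2}(r)}=\abs{\iota(u)}/\abs{\iota(r)}=\abs{\iota(r)}$, and since $\iota$ is injective this forces $\sigma^{2}(r)=\pm r$ in $F$; the degree-$4$ hypothesis rules out $+$, so $\sigma^{2}(r)=-r$, whence $u=r\sigma^{2}(r)=-r^{2}$, $u^{-1}=\sigma(u)=-\sigma(r)^{2}$, and therefore $(r\sigma(r))^{2}=(-u)(-u^{-1})=1$. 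Substituting $r\sigma(r)=\pm 1$ and $\sigma^{2}(r)=-r$ into \eqref{e:M4} gives $1+r\pm(1-r)=0$, i.e.\ $2=0$ or $2r=0$, a contradiction. Adding this observation would make your proof complete where the paper's statement is merely asserted.
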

For $P_{w}(x)$ as in Eqs.~\eqref{e:dihedparms} and (b), $\sigma(r)$  is a unit in $\Q(r)$, but it is \emph{not} an algebraic conjugate of $r$.  In this case we have the following result:
\begin{prop}\label{p:noncycrealquart}Let $m,d\in\Z$, $d\mid m-2$, $d\ne -1$, $d^{2}\le\abs{m-2}$.  Let $P_{w}(x)$ be as in \textup{Eqs.~\eqref{e:dihedparms} and (b)}. Assume $P_{w}(x)$ is irreducible with signature $(4,0)$, and $\varepsilon>1$ is the fundamental unit of $\Q(s)$. Then up to a factor of  $(1+\textit{\large{O}}\left(\abs{d/m-\!2}\right))=(1\!+\!\textit{\large{O}}(1/\sqrt{\abs{m-\!2}}))$,
\begin{align*}
&\textup{Reg}\langle -1,\varepsilon,r,\sigma(r)\rangle\approx\\
&2\ln(\varepsilon)\abs{\ln^{2}\abs{m-2} + \ln\abs{(d+\!1)^{2}/d}\ln\abs{m-2} - \ln\abs{d}\ln\abs{d+1}}.\\
\end{align*}
\end{prop}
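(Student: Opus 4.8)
The plan is to express the quantity as a genuine $3\times 3$ regulator determinant and then substitute asymptotic sizes for the conjugates involved. Since $P_{w}(x)$ is irreducible of signature $(4,0)$, the field $F=\Q(r)$ is totally real of unit rank $3$, and by Lemma~\ref{l:contsw} its quadratic subfield is $\Q(s)$; moreover $\sigma^{2}(r)=u/r$ generates $\mathrm{Gal}(F/\Q(s))$ (one checks that $\sigma^{2}$ fixes $u$, hence $\Q(s)$, and is an involution on $r$). I would index the four real places by the roots $\alpha,\beta,\gamma,\delta$ of $P_{w}=q_{1}q_{4}$, grouped into the two $\Q(s)$-conjugate pairs $\{\alpha,\beta\}$ (roots of $q_{1}$, where $s\mapsto s$ and $u\mapsto u$) and $\{\gamma,\delta\}$ (roots of $q_{4}$, where $s\mapsto-s$ and $u\mapsto u^{-1}$). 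Because $\varepsilon\in\Q(s)$ has norm $\pm1$, its logarithmic vector is $(\ell,\ell,-\ell,-\ell)$ with $\ell=\ln\varepsilon$, so $\mathrm{Reg}\langle-1,\varepsilon,r,\sigma(r)\rangle=\abs{\ell}$ times the absolute value of the $3\times3$ determinant with rows $(1,1,-1)$, $(\ln\abs{\alpha},\ln\abs{\beta},\ln\abs{\gamma})$, and $(\ln\abs{\sigma(r)^{(\alpha)}},\ln\abs{\sigma(r)^{(\beta)}},\ln\abs{\sigma(r)^{(\gamma)}})$, the fourth place being dropped.

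Next I would expand the roots and the conjugates of $\sigma(r)$ asymptotically in the single large parameter $\Lambda_{0}=\ln\max(\abs{u},\abs{u^{-1}})=\ln\abs{m-2}+O(1/\abs{m-2})$. Writing $q_{1}(x)=x(x+1+d+u)+u$ and its $s\mapsto-s$ companion $q_{4}(x)=x(x+1+d+u^{-1})+u^{-1}$ (as in the proof of Proposition~\ref{p:constell}), the four roots have sizes $\abs{u}^{\pm1}$, $1$, $\abs{d+1}$, and $\abs{u}^{\mp1}/\abs{d+1}$, so that the $r$-row of logarithms is $(\Lambda_{0},\,0,\,\ln\abs{d+1},\,-\Lambda_{0}-\ln\abs{d+1})$ up to $O(\abs{d/(m-2)})$. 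The hypothesis $d\neq-1$ enters precisely here: it keeps $1+d$ bounded away from $0$, so the middle root stays of size $\abs{d+1}$ and the denominators $r+u^{\pm1}$ occurring in $\sigma(r)$ do not degenerate. Feeding these into $\sigma(r)=(-r-1)/(r+u)$ (remembering $u\mapsto u^{-1}$ at $\gamma,\delta$) gives the $\sigma(r)$-row $(\Lambda_{0}-\ln\abs{d},\,\ln\abs{d}-2\Lambda_{0},\,\ln\abs{d}-\ln\abs{d+1},\,\Lambda_{0}-\ln\abs{d}+\ln\abs{d+1})$.

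Throughout the expansion I would use two built-in checks to guard against sign and precision slips: each logarithmic vector must sum to $0$ (the units have norm $\pm1$), and the formal identities $r\,\sigma^{2}(r)=u$ and $\sigma(r)\,\sigma^{3}(r)=u^{-1}$ in $F$ force $\ln\abs{\alpha}+\ln\abs{\beta}=\Lambda_{0}$ and $\ln\abs{\sigma(r)^{(\alpha)}}+\ln\abs{\sigma(r)^{(\beta)}}=-\Lambda_{0}$, with the analogous relations over $\{\gamma,\delta\}$; these follow because $\beta$ is the $\sigma^{2}$-conjugate of $\alpha$. With $D=\ln\abs{d}$ and $D_{1}=\ln\abs{d+1}$, a routine cofactor expansion of the $3\times3$ determinant collapses to $2\bigl(\Lambda_{0}^{2}+(2D_{1}-D)\Lambda_{0}-D\,D_{1}\bigr)$. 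Replacing $\Lambda_{0}$ by $\ln\abs{m-2}$ (a change absorbed into the stated error, since $d^{2}\le\abs{m-2}$ gives $\abs{d/(m-2)}\le 1/\sqrt{\abs{m-2}}$) and rewriting $2D_{1}-D=\ln\abs{(d+1)^{2}/d}$ produces the asserted expression, multiplied by $\ell=\ln\varepsilon$. The cases $m>2$ and $m<-2$ (the two totally real regimes for $d\neq-1$) are handled uniformly, since replacing $m$ by $-m$ merely interchanges $u\leftrightarrow u^{-1}$ and the two pairs of places.

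The main obstacle is the asymptotic bookkeeping of the conjugates rather than the final linear algebra. The delicate points are the near-cancellations: $-\beta-1$ and $-\delta-1$ are each $o(1)$, so one must retain the subleading term of the corresponding root to compute $\ln\abs{\sigma(r)^{(\beta)}}$ and $\ln\abs{\sigma(r)^{(\delta)}}$ correctly, and it is exactly the resulting $d/(m-2)$-sized corrections (dominated by the $\ln\abs{\beta}$ entry) that pin down the advertised error factor $1+O(\abs{d/(m-2)})$.
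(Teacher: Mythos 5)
Your proposal is correct and follows essentially the same route as the paper: expand $u$ and the four zeroes of $q_{1}(x)=x(x+1+d+u)+u$ and $q_{4}(x)=x(x+1+d+u^{-1})+u^{-1}$ in powers of $1/(m-2)$, read off the logarithmic sizes $\ln\abs{m-2}$, $0$, $\ln\abs{d+1}$, $-\ln\abs{m-2}-\ln\abs{d+1}$ and the corresponding values of $\sigma(r)=(-r-1)/(r+u)$, and evaluate the $3\times3$ determinant against the $\varepsilon$-row $\ell(1,1,-1)$; your identification of the near-cancellation at the root near $-1$ (where $q_{1}(-1)=-d$) as the source of the $O(\abs{d/(m-2)})$ error matches the paper's remark about the coefficients $p_{k}(d)R^{k}$. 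The only cosmetic difference is that the paper organizes the determinant by the three triples $(\varepsilon,r_{1},\sigma(r_{1}))$, $(\varepsilon^{-1},r_{3},\sigma(r_{3}))$, $(\varepsilon,u_{1}/r_{1},u_{2}/\sigma(r_{1}))$, which are exactly your three retained places.
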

\begin{proof}
With $R=1/(m-2)$, we can express the zeroes of $x^{2}-mx+1$ or $x^{2}-(1/R+2)x+1$ as formal power series in $R$, $u_{1}=1/R + 2 -R +2R^2 +\ldots$ and $u_{2}=R-2R^{2}+\ldots$, which converge for $\abs{m-2}>4$.

Taking $q_{1}(x)=x^{2}+(1+d+u_{1})x+u_{1}$, with a zero $r_{1}\approx -1$, we obtain a formal series for $r_{1}$ with terms $p_{k}(d)R^k$, where $p_{k}(d)\in\Z[d]$ has degree $k$. This gives a series for  $\sigma(r_{1})=-(r_{1}+1)/(r_{1}+u_{1})$.  Now $P_{w}(x) = q_{1}(x)q_{4}(x)$, where $q_{4}(x)=x^{2}+(d+1+u_{2})x+u_{2}$, which has a zero $r_{3}\approx -d-1$.  (The coefficient of $R^{k}$ in the series for $r_{3}$ has a power of $d+1$ in the denominator.)  Using $\varepsilon$, $r_{1}$, $\sigma(r_{1})$; $\varepsilon'=\pm\varepsilon^{-1}$, $r_{3}$, $\sigma(r_{3})=-(r_{3}+1)/(r_{3}+u_{2})$; and $\varepsilon$, $r_{2}=u_{1}/r_{1}$, $\sigma(r_{2})=u_{2}/\sigma(r_{1})$ to form the regulator determinant then gives the result.
\end{proof}
\begin{rems}When $d=-1$ the quartic fields defined by the $P_{w}(x)$ in Eq.~\eqref{e:sda} are totally real when $m<-2$, but  $r+1,r+u\in\langle -1,\varepsilon,r\rangle$, so $\langle -1,\varepsilon,r,\sigma(r)\rangle$ has rank $2$ at most.  When $m=4-t-t^{2}$, $t\in\Z$, $t>2$, we have $G=V_{4}$.  In this case,  the fundamental units of the $3$ quadratic subfields of the splitting field give $3$ independent units.
\end{rems}
Now $\sigma(r)=(-r-1)/(r+u)=-(r+1)^{2}/rq_{1}(-1)$.  Thus, $r+1$ and $r+u$ are both units precisely when  $q_{1}(-1)= (m+A+2+w)/2$ is a \emph{unit}.   With $\kk=\Q$ and $m,A\in\Z$, we then find that $q_{1}(-1)\in\langle\zeta,\varepsilon\rangle$, so
\begin{equation}\label{e:unitindex}
[\langle\zeta,\varepsilon,r,r+1\rangle\!:\!\langle\zeta,\varepsilon,r,\sigma(r)\rangle]=2, \text{ when }q_{1}(-1)=\pm 1,\text{ or }m-2=\pm 1.
\end{equation}
This gives a ``one-half'' regulator in Proposition \ref{p:imagoctsnegs} when $m=1$; in Proposition \ref{p:imagoctsposs} when $m=3$ and $A=-6$, $-5$, or $-4$; for the quartics in Eqs.~\eqref{e:sw1} - \eqref{e:barsw2}; and for the quartics in Eq.~\eqref{e:sdb}.

The regulators in Propositions \ref{p:imagoctsnegs}--\ref{p:noncycrealquart} can be extremely small.  We estimate the ``one-half'' regulator $R_{1}=\text{Reg}\langle\zeta,\varepsilon,r,r+1\rangle$ for infinite families with $\varepsilon=\tau$, the ``golden ratio;'' this makes the factor $\ln(\varepsilon)$ as small as possible.
  
When $m=1$, $A\in\Z$, $A\ne -3$, and $T(1,A,r)=0$, $F=\Q(r)$ is a totally imaginary octic field.  Here $y^{2}=A^{2}+4$.  By Eq.~\eqref{e:dp14}, if $A^{2}+4$ is squarefree, $\Delta(F/\Q)=(\Delta(\Q(s,w)/\Q))^{2}(A^{2}+4)^{2}$.  Taking  $A=L_{2j-1}-3$ makes $\varepsilon=\tau$.  Then, refining $r\approx -1$ gives, assuming $(L_{2j-1}-3)^{2}+4$ is squarefree,
\begin{equation}\label{e:tinyr1meq1}
R_{1}=\frac{1}{2}\ln(\tau)\ln^{2}\left(\frac{1}{15^{4}}\Delta(F/\Q)\right)\left(1+\textit{\large{O}}\left(1/\abs{A}\right)\right).
\end{equation}
Let $F$ be a real cyclic quartic field given by  Eq.~\eqref{e:sw1}.  Using Lemma \ref{l:genrecip2}(d), $\text{disc}(q_{1}(x))\text{disc}(q_{3}(x))=5(F_{2j-1}-2)^{2}$.  Refining $r\approx -1$, and using Proposition \ref{p:realcycquart},  we find that if $F_{2j-1}-2$ is squarefree and prime to $10$, then
\begin{equation}\label{e:tinyr1meq3}
R_{1}= \frac{1}{4}\ln(\tau)\ln^{2}\left(\frac{1}{5^{2}}\Delta(F/\Q)\right)\left(1+\textit{\large{O}}\left(1/j^{2}\right)\right).
\end{equation}
Replacing $F_{2j-1}-2$ with $F_{2j-1}+2$, and assuming this is squarefree and prime to $10$, we again obtain Eq.~\eqref{e:tinyr1meq3} for the real cyclic quartic fields given by  Eq.~\eqref{e:barsw2}.

Using Proposition \ref{p:noncycrealquart} and Eq.~\eqref{e:dp14}, we obtain an estimate asymptotically equal to that in Eq.~\eqref{e:tinyr1meq3}  for the non-normal quartic fields defined by Eq.~\eqref{e:sdb}, taking $m=L_{2j}$, a Lucas number of even index, if $4L_{2j}^{2}+9$ is squarefree.

The regulator estimates in Eqs.~\eqref{e:tinyr1meq1} and \eqref{e:tinyr1meq3} are comparable to the lower bound in \cite{silv:regest}, for totally imaginary octic fields with a real quadratic but not a real quartic subfield, or for real quartic fields with a quadratic subfield.

We do not know that the squarefreeness conditions are satisfied infinitely often, but we do not know any reason to assume otherwise.  The numbers $F_{2j-1}\pm 2$ are always the product of a Fibonacci number and a Lucas number whose indexes differ by 3, but the squarefreeness question for these is also open.

In \cite{wash:quartics}, Washington obtains a lower bound for the regulators of the cyclic quartic fields defined by $f_{t}(x)$ which proves the system of $3$ units he gives is fundamental when $t$, $t+4$, and $t^{2}+4$ are squarefree, except when $t=1$.  The system in Proposition \ref{p:realcycquart} for $P_{t}(x)$ has the same regulator.  In the case $t=1$, the system $\langle-1,\tau,r,r+1\rangle$ for Eqs.~\eqref{e:sw2} and (b) with $j=0$ is fundamental.

%new section
\section{Concluding remarks}\label{s:conclusion}
The condition \eqref{e:M} provides a conceptual unification of all families of number fields previously dubbed ``simplest.''   It also enabled us to re-derive both the ``simplest'' quartic fields and Washington's cyclic quartic fields by elementary methods, and to place both families together in a larger context.

For purely algebraic purposes, the substitution $A\leftarrow -(m+2)/2+\mathbf{v}$ used in the proof of Theorem \ref{t:relocts} may be useful.

However, more sophisticated methods than used here would surely be required to construct algebraic maps $\sigma$ satisfying \eqref{e:M} for $n>4$.  More powerful techniques would probably also refine and elaborate some of our results considerably, particularly those in \S\ref{ss:regs}.  Those wanting to to investigate class number or unit index questions might want to read \cite{gras:specunit}, \cite{gras:tableclassunit}, \cite{lazarus:classunitquart}, \cite{louboutin:effcomp}, \cite{hasse:unitclass} and \cite{hasse:classno} first.

We note that \eqref{e:M} and its $\sigma$-conjugates can be viewed as a system of $n$ constraints which may limit the relative sizes of $r,\sigma(r),\ldots, \sigma^{n-1}(r)$.  This might help account for the previously-noted feature of small regulators in ``simplest'' cyclic number fields of low degree.  In this regard, the factorization of the ``circulant'' matrix determinant (see, for example, Lemma 5.26 in \cite{wash:introcyclo}) might be of interest.

We chose the variable names $s$ and $w$ prior to learning about the ``simplest'' quartic fields or Washington's cyclic quartic fields.  The fact that the conditions $s=0$ and $w=0$ turned out to produce the ``simplest'' quartic fields and Washington's cyclic quartic fields respectively, is a coincidence that absolutely delights the author.

While investigating Eq.~\eqref{e:C4}, we noticed that $f:x\mapsto(-x-1)/(x+u)$ has compositional order $10$ when $u^{2}+3u+1=0$.  The compositional powers $f^{(3)}$ and $f^{(7)}$ make \eqref{e:M} a formal identity with $n=10$.  With $K=\Q(\sqrt{5})$,
$$\sum_{k=0}^{9}f^{(k)}(x)=2(A+Bu),\;A,B\in\Z$$
produces a family of totally real $C_{10}$ extensions of $K$ defined by units whose conjugates satisfy \eqref{e:M}, and which have exceptional sequences of \emph{four} units.  The normal closure over $\Q$ has ``generic'' Galois group $_{20}T_{53}$.
%acknowledgements
\section*{Acknowledgements}
Without Phil Carmody's willingness to perform many Pari-GP calculations, this work would hardly have begun; and his subsequent guidance in using the software was most useful in completing it.  My thesis advisor Leon McCulloh gave indispensable advice and assistance with the submission process, and pointed out simplifications of some of the arguments.  J\"{u}rgen Kl\"{u}ners suggested rational rather than integer parameter values, and using Magma to compute Galois groups of 1-parameter families of polynomials.  Derek Holt's description of $_{8}T_{11}$ as a central product, and Laurent Bartholdi's use of GAP were instrumental in understanding $_{8}T_{11}$ and $_{20}T_{53}$.  H. W. Lenstra, Jr. and Gerhard Niklasch provided historical context and specific references for the terminology and applications of exceptional units.

The members of the North Dakota NMBRTHRY listserv supplied more helpful suggestions and calculations than I can list here.  Claus Fieker provided very helpful guidance about $_{8}T_{11}$.  Franz Lemmermeyer, Attila Peth\H{o}, Siman Wong and Volker Ziegler provided useful bibliographic references.  Duncan Buell, Kok Seng Chua and Odile Lecacheux sent papers. Duncan Buell, George Gras, Patrick Morton, and Lawrence Washington sent papers, and also helped clarify the origins of the term, ``simplest number fields.''  To them, and to many others, my heartfelt thanks.  I hope the ideas and results presented here will in some measure repay their generosity.
%bibliography

%drsardonicus@earthlink.net
\end{document}